\newcolumntype{C}{>{$}c<{$}}
\newcolumntype{L}{>{$}l<{$}}
\newcolumntype{R}{>{$}r<{$}}
\newtheorem{theorem}{Theorem}[section]
\newtheorem{lemma}[theorem]{Lemma}
\newtheorem{proposition}[theorem]{Proposition}
\newtheorem{corollary}[theorem]{Corollary}
\theoremstyle{definition}
\newtheorem{definition}[theorem]{Definition}
\newtheorem{construction}[theorem]{Construction}
\newtheorem{example}[theorem]{Example}
\newtheorem{remark}[theorem]{Remark}
\newtheorem{setting}[theorem]{Setting}
\numberwithin{equation}{subsection}
\def\vector2#1#2{\left(\begin{array}{c} #1 \\ #2 \end{array}\right)}
\def\Cl{{\rm Cl}}
\def\CC{{\mathbb C}}
\def\KK{{\mathbb K}}
\def\ZZ{{\mathbb Z}}
\def\QQ{{\mathbb Q}}
\def\PP{{\mathbb P}}
\def\KKK{\mathcal{K}}
\def\OOO{{\mathcal O}}
\def\RRR{{\mathcal R}}
\DeclareMathOperator{\Eff}{Eff}
\DeclareMathOperator{\Mov}{Mov}
\DeclareMathOperator{\SAmple}{SAmple}
\DeclareMathOperator{\Ample}{Ample}
\DeclareMathOperator{\rk}{rank}
\def\grad{{\rm grad}}
\def\quot{/\!\!/}
\def\diag{{\rm diag}}
\def\bangle#1{{\langle #1 \rangle}}
\def\Aut{{\rm Aut}}
\def\GL{{\rm GL}}
\def\GL{{\rm GL}}
\def\Spec{{\rm Spec}}
\def\cone{{\rm cone}}
\def\conv{{\rm conv}}
\def\pr{{\rm pr}}
\def\rlv{{\rm rlv}}
\title[On smooth Fano fourfolds of Picard number two]%
{On smooth Fano fourfolds of Picard number two}
\thanks{The second author has been 
partially supported by Proyecto 
FONDECYT Regular N. 1190777.}
\author[J.~Hausen, A.~Laface and C.~Mauz]{J\"urgen~Hausen, Antonio Laface and Christian Mauz}
\address{Mathematisches Institut, Universit\"at T\"ubingen,
Auf der Morgenstelle 10, 72076 T\"ubingen, Germany}
\email{juergen.hausen@uni-tuebingen.de}
\address{Departamento de Matem\'atica, Universidad de Concepci\'on,
Casilla 160-C, Concepci\'on, Chile}
\email{alaface@udec.cl}
\address{Mathematisches Institut, Universit\"at T\"ubingen,
Auf der Morgenstelle 10, 72076 T\"ubingen, Germany}
\email{mauz@math.uni-tuebingen.de}
\subjclass[2010]{14J45}
\begin{document}

\begin{abstract}
We classify the smooth Fano 4-folds of Picard number 
two that have a general hypersurface Cox ring.
\end{abstract}

\maketitle

\section{Introduction}

By a Fano variety, we mean a normal projective 
complex variety with an ample anticanonical 
divisor.
Our aim is to contribute to the explicit 
classification of smooth Fano varieties.
In dimension two, these are the well known
smooth del Pezzo surfaces.
The smooth Fano threefolds have been classified 
by Iskovskikh~\cites{Is1,Is2}
and Mori/Mukai~\cites{MM1,MM2}.
From dimension four on the classification 
problem is widely open in general, 
but there are trendsetting partial results, 
such as Batyrev's classification of the 
smooth toric Fano fourfoulds~\cite{Ba}.

In the present article, we focus on the 
case of Picard number at most two.
In this situation, all smooth Fano varieties 
coming with a torus action of complexity 
at most one are known~\cite{FaHaNi} and 
in~\cite{HaHiWr} one finds a natural 
extension to complexity two.
As in~\cites{FaHaNi,HaHiWr},
our approach goes via the Cox ring.
Recall that for any normal projective 
variety $X$ with finitely generated 
divisor class group $\Cl(X)$,
the Cox ring is defined as 
$$ 
\RRR(X) 
\ =  \ 
\bigoplus_{\Cl(X)} \Gamma(X,\OOO_X(D)).
$$
In case of a smooth Fano variety $X$,
the Cox ring is known to be a finitely 
generated $\CC$-algebra~\cite{BCHM}.
We restrict our attention to simply 
structured Cox rings:
We say that a variety $X$ 
with divisor class group $\Cl(X) = K$ 
has a \emph{hypersurface Cox ring}
if we have a $K$-graded presentation
$$ 
\RRR(X)
\ = \
R_g
\ = \ 
\CC[T_1,\ldots,T_r] / \bangle{g},
$$ 
where $g$ is homogeneous of degree
$\mu \in K$ and $T_1,\ldots,T_r$
define a minimal system of $K$-homogeneous 
generators.
In this situation, we call $\RRR(X)$
\emph{spread} if each monomial of degree
$\mu$ is a convex combination
of monomials of $g$.
Moreover, we call $\RRR(X)$
\emph{general (smooth, Fano)} if $g$ 
admits an open neighbourhood 
$U$ in the vector space of all
$\mu$-homogeneous polynomials
such that every $h \in U$ yields a
hypersurface Cox ring~$R_{h}$ of a
normal (smooth, Fano) variety~$X_h$ with
divisor class group~$K$;
see also Definition~\ref{def:genhypcr}.

Among the del Pezzo surfaces,
there are no smooth ones
with a hypersurface Cox ring, 
but in the singular case, we encounter 
many examples~\cite{Der}.
The first examples of smooth Fano varieties 
with a hypersurface Cox ring show
up in dimension three;
see~\cite[Thms.~4.1 and~4.5]{DHHKL},
where, based on the classifications mentioned 
before, the Cox rings of the smooth Fano 
threefolds of Picard numbers one and two 
have been computed.
For the smooth Fano fourfolds of Picard
number one having a hypersurface Cox ring
we refer to numbers 2, 4, 5 and~8 in K\"uchle's 
list~\cite[Prop.~2.2.1]{Ku} in case of Fano
index one and in case of higher Fano index
to the numbers~14, 15, 18, 19, 20 and~22
in the list of Przyjalkowski and
Shramov~\cite[p.~12]{PrSh}.

\goodbreak 

We approach our main result,
concerning Fano fourfolds of Picard number
two. The notation is as follows.
For any hypersurface Cox
ring~$\RRR(X) = R_g$
graded by~$\Cl(X) = K$, we write 
$w_i = \deg(T_i) \in K$
for the generator degrees 
and $\mu = \deg(g) \in K$ for the
degree of the relation.
Moreover, in this setting, the anticanonical 
class of $X$ is given by 
$$ 
\mathcal{-K}
\ = \ 
w_1 + \ldots + w_r - \mu
\ \in \ 
\Cl(X)
\ =  \ 
K.
$$
If $R_g$ is the Cox ring 
of a Fano variety $X$,
then $X$ can be reconstructed 
as the GIT quotient of the set 
of $(-\KKK)$-semistable points 
of $\Spec \, R_g$ by the quasitorus 
$\Spec \, \CC[K]$. 
In this setting, we refer to 
the Cox ring generator degrees
$w_1,  \ldots, w_r \in K$ and the 
relation degree $\mu \in K$ as the
\emph{specifying data of the Fano variety $X$}.
In the case of a smooth Fano fourfould~$X$
of Picard number two, $\Cl(X)$
equals $\ZZ^2$ and thus $\Spec \, \CC[K]$ is
a two-dimensional torus.
Hence the hypersurface Cox ring $R_g$
is of dimension six and has seven
generators.


\begin{theorem} 
\label{thm:candidates}
The following table lists the specifying data 
$w_1, \ldots, w_7$ and $\mu$  in $\Cl(X) = \ZZ^2$,
the anticanonical class $-\KKK$ and $\KKK^4$
for all smooth Fano fourfolds of Picard number
two with a spread hypersurface Cox ring.

\begin{center}
\footnotesize
\setlength\arraycolsep{2pt}
\addtolength\tabcolsep{-.35em}

\newcounter{candidateno}

\newcommand{\mycolumnwidth}{.485\textwidth}

\begin{minipage}[t]{\mycolumnwidth}
\begin{tabular}[t]{>{\refstepcounter{candidateno}\thecandidateno}LCCCR}
\toprule
\multicolumn{1}{c}{No.} & [w_1, \dotsc, w_7] & \deg(g) & - \KKK & \KKK^4 \\
\midrule


\label{cand:deginrelint-ia-1}& \multirow{6}{*}{
	$\tiny \begin{bmatrix}
		1 & 1 & 1 & 1 & 0 & 0 & 0 \\
		0 & 0 & 0 & 0 & 1 & 1 & 1
	\end{bmatrix}$ } &
  (1,1) & (3,2) & 432 \\
\label{cand:deginrelint-ia-2} & & (2,1) & (2,2) & 256 \\
\label{cand:deginrelint-ia-3} & & (3,1) & (1,2) & 80\\
\label{cand:deginrelint-ia-4} & & (1,2) & (3,1) & 270\\
\label{cand:deginrelint-ia-5} & & (2,2) & (2,1) & 112\\
\label{cand:deginrelint-ia-6} & & (3,2) & (1,1) & 26 \\ \midrule

\label{cand:deginrelint-ia-7} & \multirow{4}{*}{
	$\tiny\begin{bmatrix*}[r]
		1 & 1 & 1 & 1 & 0 & 0 & -1 \\
		0 & 0 & 0 & 0 & 1 & 1 &  1
	\end{bmatrix*}$ } &
  (1,1) & (2,2) & 416 \\ 
\label{cand:deginrelint-ia-8} & & (1,2) & (2,1) & 163 \\
\label{cand:deginrelint-ia-9} & & (2,1) & (1,2) & 224 \\
\label{cand:deginrelint-ia-10} & & (2,2) & (1,1) & 52 \\ \midrule

\label{cand:deginrelint-ia-11} & \multirow{2}{*}{
	$\tiny\begin{bmatrix*}[r]
		1 & 1 & 1 & 1 & 0 & 0 & -2 \\
		0 & 0 & 0 & 0 & 1 & 1 &  1
	\end{bmatrix*}$ } &
  (1,1) & (1,2) & 464  \\
\label{cand:deginrelint-ia-12} & & (1,2) & (1,1) & 98 \\ \midrule

\label{cand:deginrelint-ib-1} & \multirow{2}{*}{
	$\tiny\begin{bmatrix}
		1 & 1 & 1 & 1 & 0 & 0 & 0 \\
		0 & 0 & 0 & 1 & 1 & 1 & 1
	\end{bmatrix}$ } &
  (1, 2) & (3, 2) & 352  \\
\label{cand:deginrelint-ib-3} & & (2, 3) & (2, 1) & 65  \\ \midrule

\label{cand:deginrelint-ib-4} & {\tiny\begin{bmatrix*}[r]
		1 & 1 & 1 & 1 & 0 & 0 & -1 \\
		0 & 0 & 0 & 1 & 1 & 1 &  1
	\end{bmatrix*}} &
  (1, 3) & (2, 1) & 83  \\ \midrule

\label{cand:deginrelint-ii-1}& \multirow{2}{*}{
	$\tiny\begin{bmatrix}
		1 & 1 & 1 & 1 & 1 & 0 & 0 \\
		0 & 0 & 0 & 0 & 1 & 1 & 1
	\end{bmatrix}$ } &
  (2, 1) & (3, 2) & 352 \\
\label{cand:deginrelint-ii-2} & & (3, 2) & (2, 1) & 81 \\ \midrule

& \multirow{3}{*}{
	$\tiny\begin{bmatrix*}[r]
		 1 & 1 & 1 & 1 & 0 & 0 & 0 \\
		-1 & 0 & 0 & 0 & 1 & 1 & 1
	\end{bmatrix*}$ } &
  (3, 1) & (1, 1) & 38 \\
& & (2, 1) & (2, 1) & 192 \\
\label{cand:deginrelint-BI1-3} & & (1, 1) & (3, 1) & 432 \\ \midrule

&	{\tiny\begin{bmatrix*}[r]
		 1 & 1 & 1 & 1 & 1 & 0 & 0 \\
		-1 & 0 & 0 & 0 & 1 & 1 & 1
	\end{bmatrix*} } &
 (3, 1) & (2, 1) & 113 \\ \midrule



\label{cand:II-1-1} & \multirow{2}{*}{
	$\tiny\begin{bmatrix*}[r]
		1 & 1 & 1 & 1 & 0 & 0 & 0 \\
		0 & 0 & 1 & 1 & 1 & 1 & 1
	\end{bmatrix*}$ } &
  (2,2) & (2,3) & 272 \\
\label{cand:II-1-2} & & (3,3) & (1,2) & 51 \\ \midrule

\label{cand:II-1-3} & {\tiny\begin{bmatrix*}[r]
		1 & 1 & 1 & 2 & 0 & 0 & 0 \\
		0 & 0 & 1 & 2 & 1 & 1 & 1
	\end{bmatrix*}} &
	(4,4) & (1,2) & 34 \\ \midrule

\label{cand:II-1-4} & {\tiny\begin{bmatrix*}[r]
		1 & 1 & 2 & 3 & 0 & 0 & 0 \\
		0 & 0 & 2 & 3 & 1 & 1 & 1
	\end{bmatrix*}} &
	(6,6) & (1,2) & 17 \\ \midrule

\label{cand:II-2-1} & {\tiny\begin{bmatrix*}[r]
		1 & 1 & 1 & 0 & 0 & 0 & 0 \\
		0 & 0 & 1 & 1 & 1 & 1 & 1
	\end{bmatrix*}} &
	(2,2) & (1,3) & 216 \\ \midrule

\label{cand:II-2-2} &  {\tiny\begin{bmatrix*}[r]
		1 & 1 & 1 & 0 & 0 & 0 & 0 \\
		0 & 0 & 2 & 1 & 1 & 1 & 1
	\end{bmatrix*}} &
	(2,4) & (1,2) & 64 \\ \midrule

\label{cand:II-2-3} &  {\tiny\begin{bmatrix*}[r]
		1 & 1 & 1 & 0 & 0 & 0 & 0 \\
		0 & 0 & 3 & 1 & 1 & 1 & 1
	\end{bmatrix*}} &
	(2,6) & (1,1) & 8 \\ \bottomrule

\end{tabular}
\end{minipage}
\begin{minipage}[t]{\mycolumnwidth}
\begin{tabular}[t]{>{\refstepcounter{candidateno}\thecandidateno}LCCCR}
\toprule
\multicolumn{1}{c}{No.} & [w_1, \dotsc, w_7] & \deg(g) & - \KKK & \KKK^4 \\
\midrule

\label{cand:II-3-1} & \multirow{2}{*}{
	$\tiny\begin{bmatrix*}[r]
		1 & 1 & 1 & 1 & 0 & 0 & 0 \\
		0 & 0 & 0 & 1 & 1 & 1 & 1
	\end{bmatrix*}$ } &
  (2,2) & (2,2) & 192 \\
& & (3,3) & (1,1) & 18 \\ \midrule

\label{cand:II-3-2} & {\tiny\begin{bmatrix*}[r]
		1 & 1 & 1 & 2 & 0 & 0 & 0 \\
		0 & 0 & 0 & 1 & 1 & 1 & 1
	\end{bmatrix*}} &
	(4,2) & (1,2) & 48 \\ \midrule

\label{cand:II-3-4} & {\tiny\begin{bmatrix*}[r]
		1 & 1 & 1 & 2 & 0 & 0 & 0 \\
		0 & 0 & 0 & 2 & 1 & 1 & 1
	\end{bmatrix*}} &
	(4,4) & (1,1) & 12 \\ \midrule


\label{cand:III-0-1} &  {\tiny\begin{bmatrix}
		1 & 1 & 2 & 1 & 0 & 0 & 0 \\
		0 & 1 & 3 & 2 & 1 & 1 & 1
\end{bmatrix}} &
 (4, 6) & (1, 3) & 50  \\ \midrule

\label{cand:III-1-1} \label{cand:1contr-0-1} & \multirow{3}{*}{
	$\tiny\begin{bmatrix}
		1 & 1 & 1 & 1 & 1 & 0 & 0 \\
		0 & 1 & 1 & 1 & 1 & 1 & 1
	\end{bmatrix}$ } &
 (2, 2) & (3, 4) & 378 \\
& & (3, 3) & (2, 3) & 144 \\ 
& & (4, 4) & (1, 2) & 20 \\ \midrule

&  {\tiny\begin{bmatrix}
		1 & 1 & 1 & 1 & 2 & 0 & 0 \\
		0 & 1 & 1 & 1 & 2 & 1 & 1
\end{bmatrix}} &
 (4, 4) & (2, 3) & 96 \\ \midrule

\label{cand:III-1-5} &  {\tiny\begin{bmatrix}
		1 & 1 & 1 & 1 & 3 & 0 & 0 \\

		0 & 1 & 1 & 1 & 3 & 1 & 1
\end{bmatrix}} &
 (6, 6) & (1, 2) & 10 \\ \midrule

\label{cand:III-1-6}&  {\tiny\begin{bmatrix}
		1 & 1 & 1 & 2 & 3 & 0 & 0 \\
		0 & 1 & 1 & 2 & 3 & 1 & 1
\end{bmatrix}} &
 (6, 6) & (2, 3) & 48 \\ \midrule

\label{cand:III-3-1} & \multirow{2}{*}{
	$\tiny\begin{bmatrix}
		1 & 1 & 1 & 1 & 0 & 0 & 0 \\
		0 & 1 & 1 & 1 & 1 & 1 & 1
	\end{bmatrix}$ } &
  (2, 2) & (2, 4) & 352 \\
& & (3, 3) & (1, 3) & 99 \\ \midrule

\label{cand:1contr-1_2_1_1-3} & \multirow{2}{*}{
	$\tiny\begin{bmatrix}
		1 & 1 & 1 & 1 & 0 & 0 & 0 \\
		0 & 2 & 2 & 2 & 1 & 1 & 1
	\end{bmatrix}$ } &
  (2, 4) & (2, 5) & 304 \\
\label{cand:1contr-1_2_1_1-4}  & & (3, 6) & (1, 3) & 54 \\ \midrule

&  {\tiny\begin{bmatrix}
		1 & 1 & 1 & 2 & 0 & 0 & 0 \\
		0 & 1 & 1 & 2 & 1 & 1 & 1
\end{bmatrix}} &
 (4, 4) & (1, 3) & 66 \\ \midrule

\label{cand:III-3-6} \label{cand:1contr-1_2_1_1-6}  &  {\tiny\begin{bmatrix}
		1 & 1 & 1 & 2 & 0 & 0 & 0 \\
		0 & 2 & 2 & 4 & 1 & 1 & 1
\end{bmatrix}} &
 (4, 8) & (1, 3) & 36 \\ \midrule

\label{cand:III-3-7} &  {\tiny\begin{bmatrix}
		1 & 1 & 2 & 3 & 0 & 0 & 0 \\
		0 & 1 & 2 & 3 & 1 & 1 & 1
\end{bmatrix}} &
 (6, 6) & (1, 3) & 33  \\ \midrule

\label{cand:III-3-8} \label{cand:1contr-1_2_1_1-8} &  {\tiny\begin{bmatrix}
		1 & 1 & 2 & 3 & 0 & 0 & 0 \\
		0 & 2 & 4 & 6 & 1 & 1 & 1
\end{bmatrix}} &
 (6, 12) & (1, 3) & 18 \\ \midrule

\label{cand:III-3-9} \label{cand:1contr-1_2_1_2_2-1} &  {\tiny\begin{bmatrix}
		1 & 1 & 1 & 1 & 1 & 0 & 0 \\
		0 & 1 & 1 & 1 & 2 & 1 & 1
\end{bmatrix}} &
 (2, 2) & (3, 5) & 433 \\ \midrule

\label{cand:III-3-10} \label{cand:1contr-1_2_1_2_2-2} & {\tiny\begin{bmatrix}
		1 & 1 & 1 & 1 & 1 & 0 & 0 \\
		0 & 2 & 2 & 2 & 3 & 1 & 1
	\end{bmatrix} } &
(3, 6) & (2, 5) & 145 \\ \midrule

\label{cand:III-4-1} &  {\tiny\begin{bmatrix}
		1 & 1 & 1 & 1 & 0 & 0 & 0 \\
		0 & 1 & 1 & 2 & 1 & 1 & 1
\end{bmatrix}} &
 (2, 4) & (2, 3) & 144 \\ \bottomrule

\end{tabular}
\end{minipage}

\begin{minipage}[t]{\mycolumnwidth}
\begin{tabular}[t]{>{\refstepcounter{candidateno}\thecandidateno}LCCCR}
\toprule
\multicolumn{1}{c}{No.} & [w_1, \dotsc, w_7] & \deg(g) & - \KKK & \KKK^4 \\
\midrule

\label{cand:III-4-2} &  {\tiny\begin{bmatrix}
		1 & 1 & 1 & 2 & 0 & 0 & 0 \\
		0 & 1 & 1 & 3 & 1 & 1 & 1
\end{bmatrix}} &
 (4, 6) & (1, 2) & 22 \\ \midrule

\label{cand:III-4-3} &  {\tiny\begin{bmatrix}
		1 & 1 & 1 & 2 & 1 & 0 & 0 \\
		0 & 1 & 1 & 3 & 2 & 1 & 1
\end{bmatrix}} &
 (4, 6) & (2, 3) & 65  \\ \midrule


\label{cand:IV-1-1} \label{cand:2contr-1} & \multirow{2}{*}{
	$\tiny\begin{bmatrix*}[r]
		 1 & 1 & 1 & 1 & 1 & 1 & 0 \\
		-1 & 0 & 0 & 0 & 0 & 1 & 1
	\end{bmatrix*}$} &
 (2,0) & (4,1) & 431 \\
\label{cand:IV-1-2} & & (4,0) & (2,1) & 62 \\ \midrule

\label{cand:IV-1-3} \label{cand:2contr-2} &  {\tiny\begin{bmatrix*}[r]
		 1 & 1 & 1 & 1 & 1 & 2 & 0 \\
		-1 & 0 & 0 & 0 & 0 & 1 & 1
\end{bmatrix*}} &
 (3,0) & (4,1) & 376  \\ \midrule

\label{cand:IV-1-4} \label{cand:2contr-3} &  {\tiny\begin{bmatrix*}[r]
		 1 & 1 & 1 & 1 & 1 & 3 & 0 \\
		-1 & 0 & 0 & 0 & 0 & 1 & 1
\end{bmatrix*}} &
 (4,0) & (4,1) & 341  \\ \midrule

\label{cand:IV-1-5} &  {\tiny\begin{bmatrix*}[r]
		 1 & 1 & 1 & 1 & 3 & 1 & 0 \\
		-1 & 0 & 0 & 0 & 0 & 1 & 1
\end{bmatrix*}} &
 (6,0) & (2,1) & 31  \\ \midrule

\label{cand:V-1-1} & {\tiny \begin{bmatrix*}[r]
	1 & 1 & 1 & 1 & 3 & 0 & 0 \\ 
	0 & 0 & 0 & 0 & 0 & 1 & 1 \\ 
\end{bmatrix*}} & 
(6, 0) & (1, 2) & 16 \\ \midrule

\label{cand:V-1-2} & {\tiny \begin{bmatrix*}[r]
	1 & 1 & 1 & 2 & 3 & 0 & 0 \\ 
	0 & 0 & 0 & 0 & 0 & 1 & 1 \\ 
\end{bmatrix*}} &
(6,0) & (2, 2) & 64 \\ \bottomrule

\end{tabular}
\end{minipage}
\begin{minipage}[t]{\mycolumnwidth}
\begin{tabular}[t]{>{\refstepcounter{candidateno}\thecandidateno}LCCCR}
\toprule
\multicolumn{1}{c}{No.} & [w_1, \dotsc, w_7] & \deg(g) & - \KKK & \KKK^4 \\
\midrule

\label{cand:V-1-3} & {\tiny \begin{bmatrix*}[r]
	1 & 1 & 1 & 2 & 3 & 1 & 0 \\ 
	0 & 0 & 0 & 0 & 0 & 1 & 1 \\ 
\end{bmatrix*}} &
(6,0) & (3, 2) & 80 \\ \midrule

\label{cand:V-1-4} & {\tiny \begin{bmatrix*}[r]
	1 & 1 & 1 & 1 & 2 & 0 & 0 \\ 
	0 & 0 & 0 & 0 & 0 & 1 & 1 \\ 
\end{bmatrix*}} &
(4,0) & (2, 2) & 128 \\ \midrule

\label{cand:V-1-5} & {\tiny \begin{bmatrix*}[r]
	1 & 1 & 1 & 1 & 2 & 1 & 0 \\ 
	0 & 0 & 0 & 0 & 0 & 1 & 1 \\ 
\end{bmatrix*}} &
(4,0) & (3, 2) & 160 \\ \midrule

\label{cand:V-1-6} & {\tiny \begin{bmatrix*}[r]
	1 & 1 & 1 & 1 & 1 & 0 & 0 \\ 
	0 & 0 & 0 & 0 & 0 & 1 & 1 \\ 
\end{bmatrix*}} &
(3, 0) & (2, 2) & 192 \\ \midrule

\label{cand:V-1-7} & {\tiny \begin{bmatrix*}[r]
	1 & 1 & 1 & 1 & 1 & 1 & 0 \\ 
	0 & 0 & 0 & 0 & 0 & 1 & 1 \\ 
\end{bmatrix*}} &
(3, 0) & (3, 2) & 240 \\ \midrule

\label{cand:V-1-8} & {\tiny \begin{bmatrix*}[r]
	1 & 1 & 1 & 1 & 1 & 0 & 0 \\ 
	0 & 0 & 0 & 0 & 0 & 1 & 1 \\ 
\end{bmatrix*}} & 
(2,0) & (3, 2) & 432 \\ \midrule

\label{cand:V-1-9} & {\tiny \begin{bmatrix*}[r]
	1 & 1 & 1 & 1 & 1 & 1 & 0 \\ 
	0 & 0 & 0 & 0 & 0 & 1 & 1 \\ 
\end{bmatrix*}} & 
(2,0) & (4, 2) & 480 \\ \midrule

\label{cand:V-1-10} & {\tiny \begin{bmatrix*}[r]
	1 & 1 & 1 & 1 & 1 & 2 & 0 \\ 
	0 & 0 & 0 & 0 & 0 & 1 & 1 \\ 
\end{bmatrix*}} & 
(2,0) & (5, 2) & 624 \\ \bottomrule

\end{tabular}
\end{minipage}
\end{center}

\bigskip
\noindent
Any two smooth Fano fourfolds of Picard number
two with specifying data from distinct items of the table
are not isomorphic to each other.
Moreover, each of the items~1 to~\thecandidateno \
even defines a general smooth Fano hypersurface Cox ring and
thus provides the specifying data for a whole family of smooth
Fano fourfolds.
\end{theorem}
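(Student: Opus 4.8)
The plan is to treat the classification as a boundedness problem for the discrete data attached to $X$ and then to verify a finite list. By the theory recalled in the introduction, a smooth Fano fourfold $X$ of Picard number two with a spread hypersurface Cox ring is reconstructed from its specifying data: the $2\times 7$ integral matrix $Q$ with columns $w_1,\dots,w_7$ generating $\Cl(X)=\ZZ^2$, together with the relation degree $\mu\in\ZZ^2$. Here $X = \b{X}\git H$, where $\b{X}=\V(g)\subseteq\CC^7$, $H=\Spec\,\CC[\ZZ^2]$, and the linearization is the ample class; equivalently $X$ is a hypersurface of class $\mu$ in the rank-two toric variety $Z$ determined by $(Q,-\KKK)$. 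My first step is to normalize: since $\Aut(\ZZ^2)=\GL_2(\ZZ)$ acts on the grading and $S_7$ permutes the variables, I bring $Q$ into a standard form, ordering the columns by slope so that $\Eff(X)=\cone(w_1,\dots,w_7)\subseteq\RR^2$ becomes a fixed salient two-dimensional cone with distinguished extremal weights.

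Next comes the structural reduction. In rank two every relevant cone lies in $\RR^2$, so the GIT fan is a finite fan of two-dimensional chambers, the ample cone is the chamber containing $-\KKK$, and the Fano condition reads $-\KKK\in\relint\Ample(X)$. Smoothness of $X$ is tested stratum by stratum via the Jacobian criterion on the toric ambient $Z$: on each relevant face of the weight cone, either $X$ misses the corresponding toric orbit or the partial derivatives of $g$ cut out a smooth locus there (so $Z$ itself may be singular, as long as $X$ avoids those strata). Combined with the spread hypothesis, which forces the Newton polytope of $g$ in degree $\mu$ to be the full polytope of $\mu$-monomials, these requirements organize the possibilities into a short list of combinatorial formats for $(Q,\mu)$ — finitely many patterns describing how the seven weights distribute among the rays of $\RR^2$ and how $\mu$ sits relative to them. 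I expect this to follow the format-by-format scheme already used in the cited work on Fano threefolds and on varieties with torus action.

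The heart of the argument, and the step I expect to be the main obstacle, is \textbf{boundedness}: that within each format only finitely many matrices $Q$ and degrees $\mu$ occur. The mechanism is that smoothness of the fourfold hypersurface, applied at the coordinate strata associated with the extremal weights, bounds the entries of $Q$, while the requirement that $-\KKK=w_1+\dots+w_7-\mu$ lie strictly inside the ample chamber pins $\mu$ into a bounded window once $Q$ is fixed. Concretely, quasi-smoothness of a general $g$ forces the relation degree to be realized by monomials involving the extremal variables, and this, together with minimality of the generators (no $T_i$ removable) and $\codim\,\V(g)=1$, yields explicit inequalities on the weight entries. Making these inequalities tight enough to leave only finitely many solutions per format, and checking that no format is overlooked, is the delicate point; I would carry it out by a case analysis supported by a computer enumeration.

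Once boundedness is secured the remaining work is finite verification. For each surviving candidate $(Q,\mu)$ I confirm, by the Jacobian criterion and a Bertini-type argument on $Z$, that a general $\mu$-homogeneous $g$ defines a normal, smooth, four-dimensional hypersurface, that $T_1,\dots,T_7$ are minimal generators with $g$ the single relation, and that $-\KKK$ is ample; the survivors are exactly the rows of the table, and intersection theory on $Z$ supplies the value $\KKK^4$. Since smoothness and ampleness of $-\KKK$ are open and depend on $g$ only through the (spread) Newton polytope, each such candidate in fact carries an open set $U$ of admissible defining equations, which is precisely the statement that it defines a \emph{general} smooth Fano hypersurface Cox ring, giving the last sentence of the theorem. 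Finally, non-isomorphism is formal: the Cox ring with its $\Cl(X)$-grading is an invariant of $X$, hence so are the multiset of minimal generator degrees (up to $\GL_2(\ZZ)$ and $S_7$) and the relation degree; as the table lists pairwise inequivalent normal forms — in most cases already separated by the numerical invariants $-\KKK$ and $\KKK^4$ — varieties from distinct items cannot be isomorphic.
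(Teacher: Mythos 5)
Your overall scheme---normal form for $Q$, case distinction by where $\mu$ sits, boundedness from smoothness plus the Fano inequality, finite verification, openness, and the formal non-isomorphism argument---matches the paper's architecture. But there is a genuine gap at the step you yourself flag as the delicate one. You propose to bound the entries of $Q$ using the Jacobian/local-factoriality conditions \emph{on $X$ itself}, i.e.\ at the ample chamber only. That is not enough: for a face $\gamma_{i,j}$ to impose a determinant condition via Proposition~\ref{prop:locprops}~(ii), its image $\cone(w_i,w_j)$ must contain the ample cone, so weights lying on the same side of $\Ample(X)$ yield no constraint from $X$ alone, and their entries remain a priori unbounded. The paper closes exactly this hole with Remark~\ref{rem:smoothfanoprops}: by Casagrande's result~\cite{Ca}, every small quasimodification $X(\eta)$, $\eta^\circ\subseteq\Mov(R)^\circ$, of a smooth Fano fourfold is again smooth (and $\Cl(X)$ is torsion free). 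This lets one impose local factoriality and quasismoothness at \emph{every} chamber of the moving cone, which is what drives the weight-collision and determinant-one lemmas (Lemmas~\ref{lem:twofaces}, \ref{lem:threegenerate}, \ref{lem:2on1ray}, \ref{lem:4det} and Proposition~\ref{prop:deginrelint-collaps}) and hence finiteness. Without this ingredient your ``inequalities per format'' do not pin down $Q$, and no computer enumeration can start, because the search space is infinite before these structural collisions are established.

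Two further points are glossed over. First, in the verification direction it is not enough that a general $g$ gives a smooth hypersurface with $T_1,\dots,T_7$ minimal generators: you must show that $R_g$ is an abstract Cox ring, in particular ($K$-)factorial, otherwise $X_g$ has the wrong divisor class group and $R_g$ is not its Cox ring at all; the paper does this via Dolgachev's criterion and~\cite{ArLa} (Propositions~\ref{prop:ufdcrit} and~\ref{prop:varclassprime}), and this is not a consequence of Bertini. Second, the constellation where $\mu$ lies on a boundary ray of $\Eff(R)$ (Part~III) is not amenable to your generic scheme: there $g$ involves only some of the variables, $R=R'[T_6,T_7]$ splits off a polynomial ring, and the paper concludes by reducing to the known classification of smooth Fano \emph{threefolds} with hypersurface Cox ring~\cite{DHHKL}, together with an argument excluding $m=3,4$ via product structure and del Pezzo surfaces. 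Some substitute for these inputs would have to appear in any complete proof.
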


Let us compare the result with existing classifications.
Wi\'{s}niewski classified in~\cite{Wi} the smooth 
Fano fourfolds of Picard number and Fano index 
at least two,
where the Fano index is the largest 
integer $\iota$ such that $-\KKK = \iota H$
holds with an ample divisor~$H$.

\begin{remark}
In eight cases, the families listed 
in Theorem~\ref{thm:candidates}
consist of varieties of Fano 
index two and in all other cases,
the varieties are of Fano index 
one.
The conversion between
Theorem~\ref{thm:candidates} 
and Wi\'{s}niewski's results
as presented in the 
table~\cite[12.7]{AG5}
is as follows:
\bigskip

\begin{center}
{\small
\begin{tabular}{l|l|l|l|l|l|l|l|l}
Thm.~\ref{thm:candidates} 
& 
\ref{cand:deginrelint-ia-2} 
&
\ref{cand:deginrelint-ia-7}  
&
\ref{cand:II-3-1}  
&
\ref{cand:III-3-1}  
&
\ref{cand:V-1-2}  
&
\ref{cand:V-1-4}  
&
\ref{cand:V-1-6}  
&
\ref{cand:V-1-9}  
\\
\hline
\cite[12.7]{AG5} 
&
5
& 
12   
& 
4
& 
10
& 
1
& 
2
& 
3
& 
13
\end{tabular}
}
\end{center}
\end{remark}

Theorem~\ref{thm:candidates} has no overlap
with Batyrev's classification~\cite{Ba} of 
smooth toric Fano fourfolds.
Indeed, toric varieties have polynomial rings 
as Cox rings which are by definition
no hypersurface Cox rings.
However, there is some interaction with 
the case of torus actions of complexity 
one.

\begin{remark}
Eleven of the families of Theorem~\ref{thm:candidates}
admit small degenerations to smooth Fano fourfolds 
with an effective action of a three-dimensional torus.
Here are these families and the corresponding 
varieties  from~\cite[Thm.~1.2]{FaHaNi}.

\bigskip

\begin{center}
{\small
\begin{longtable}{l|l}
Thm.~\ref{thm:candidates} & \cite[Thm.~1.2]{FaHaNi} 
\\
\hline
\ref{cand:deginrelint-ia-1}  & 4.A:  $m = 1$,  $c = 0$ 
\\
\ref{cand:deginrelint-ia-4}  & 4.C:  $m = 1$  
\\
\ref{cand:deginrelint-ia-7}  & 2   
\\
\ref{cand:deginrelint-ib-1}  & 5:   $m = 1$ 
\\
\ref{cand:deginrelint-BI1-3}  & 4.A: $m = 1$, $c=-1$ 
\\
\ref{cand:III-1-1}  & 1  
\\
\ref{cand:1contr-1_2_1_2_2-1}  & 10:  $m = 2$ 
\\
\ref{cand:2contr-1}  & 7:  $m = 1$ 
\\
\ref{cand:V-1-8}  & 12: $m = 2$, $a=b=c=0$ 
\\
\ref{cand:V-1-9} & 11: $m = 2$, $a_2 = 1$   
\\
\ref{cand:V-1-10}  & 11:   $m=2$, $a_2=2$ 
\end{longtable}
}
\end{center}
Moreover, observe that for the families 1, 20, 48, 53, 65, 66 and 67 
of Theorem~\ref{thm:candidates} the degeneration process
gives a Fano smooth intrinsic quadric; compare~\cite[Thm.~1.3]{FaHa}.
\end{remark}

\begin{remark}
Coates, Kasprzyk and Prince classified 
in~\cite{CKP} the smooth Fano fourfolds
that arise as complete intersections of ample
divisors in smooth toric Fano varieties
of dimension at most eight. 
Comparing anticanonical self-intersection numbers
as well as the first six coefficients of the
Hilbert series yields that at least the~17
families 14, 15, 24, 25, 28, 30, 32, 33,
38, 44, 45, 46, 47, 51, 52, 57 and 58 
of Theorem~\ref{thm:candidates} do not show up~\cite{CKP}.
\end{remark}

In Sections~\ref{sec:geometry} to~\ref{sec:defaut},
we investigate the 
geometry of the Fano varieties from Theorem~\ref{thm:candidates}.
We take a look at the elementary contractions, 
see Proposition~\ref{prop:contractions}, we determine the
Hodge numbers, see Propositions~\ref{prop:hodgediamond}
and~\ref{prop:hodgenum} and in many cases
the inifinitesimal deformations, see
Corollary~\ref{cor:infdef}.

\tableofcontents

\section{Factorial gradings}

Here we provide the first part of the 
algebraic and combinatorial tools used 
in our classification.
We recall the basic concepts on factorially 
graded algebras and, as a new result, 
prove Proposition~\ref{prop:hypmov}, 
locating the relation degrees of a factorially 
graded complete intersection algebra.
Moreover, we recall and discuss the GIT-fan 
of the quasitorus action associated with a 
graded affine algebra.

For the moment, $\KK$ is any field.
Let~$R$ be a \emph{$K$-graded algebra},
which, in this article, means that~$K$ is 
a finitely generated abelian group and 
$R$ is a $\KK$-algebra coming 
with a direct sum decomposition into 
$\KK$-vector subspaces
$$ 
R \ = \ \bigoplus_{w \in K} R_w
$$
such that $R_wR_{w'} \subseteq R_{w+w'}$ holds 
for all $w,w' \in R$.
An element $f \in R$ is \emph{homogeneous} if 
$f \in R_w$ holds for some $w \in K$; 
in that case, $w$ is the \emph{degree} of $f$,
written $w = \deg(f)$.
We say that $R$ is \emph{$K$-integral} 
if it has no homogeneous zero divisors.

Consider the rational vector space 
$K_\QQ := K \otimes_\ZZ \QQ$ associated 
with $K$.
The \emph{effective cone} of $R$  
is the convex cone generated by 
all degrees admitting a non-zero 
homogeneous element:
$$ 
\Eff(R) 
\ := \ 
\cone(w \in K; \ R_w \ne 0)
\ \subseteq \ 
K_\QQ.
$$
The $K$-grading of $R$ is called \emph{pointed}
if $R_0 = \KK$ holds and the effective cone 
$\Eff(R)$ contains no line.
Note that $\Eff(R)$ is polyhedral, if the 
$\KK$-algebra $R$ is finitely generated.

\begin{lemma}
\label{lem:coarshom}
Let $R$ be a $K$-graded algebra.
Assume that $R$ is $K$-integral and every 
homogeneous unit of $R$ is of degree zero.
\begin{enumerate}
\item
If $R_0 = \KK$ holds, then the $K$-grading 
is pointed and for every 
non-zero torsion element $w \in K$,
we have $R_w = 0$.
\item
The $K$-grading is pointed if and 
only if there is a homomorphism 
$\kappa \colon K \to \ZZ$ defining a 
pointed $\ZZ$-grading with effective cone 
$\QQ_{\ge 0}$.
\end{enumerate}
\end{lemma}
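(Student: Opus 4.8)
The plan is to phrase everything through the submonoid
$$
S \ := \ \{\, w \in K \ ; \ R_w \neq 0 \,\},
$$
which is closed under addition because $R$ is $K$-integral (a product of two nonzero homogeneous elements is again nonzero), contains $0$ since $R_0 = \KK$, and satisfies $\Eff(R) = \cone(S)$. Two elementary observations will be used repeatedly: first, any product $\prod_l f_l^{n_l}$ of nonzero homogeneous $f_l$ is nonzero and homogeneous of degree $\sum_l n_l \deg(f_l)$; second, a nonzero element of $R_0 = \KK$ is a unit, whence by the hypothesis on homogeneous units every homogeneous factor of it must have degree $0$.

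For part (i) I would first dispose of the torsion statement. Given nonzero torsion $w$ of order $n$ and a hypothetical $0 \neq f \in R_w$, the power $f^n$ lies in $R_{nw} = R_0 = \KK$ and is nonzero, so $f$ is a homogeneous unit of nonzero degree $w$, contradicting the hypothesis; hence $R_w = 0$. For pointedness, I would argue by contradiction: a line in $\Eff(R) = \cone(S)$ provides a nonzero $v \in K_\QQ$ with $v, -v \in \cone(S)$, and writing both as nonnegative combinations over $S$ and clearing denominators gives $z_1, \dotsc, z_m \in S$ and integers $n_l \geq 1$, not all $z_l$ zero, with $\sum_l n_l z_l$ torsion in $K$. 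Choosing nonzero $f_l \in R_{z_l}$, the product $\prod_l f_l^{n_l}$ is a nonzero element of a torsion degree, which by the torsion statement just proved must be $R_0 = \KK$; then each $f_l$ is a unit, forcing every $z_l = 0$, the desired contradiction. With $R_0 = \KK$ assumed, this yields pointedness.

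The easy implication of part (ii) is the direction assuming a suitable $\kappa \colon K \to \ZZ$. Writing $R^{(d)} = \bigoplus_{\kappa(w) = d} R_w$, the hypothesis that the effective cone of this $\ZZ$-grading is $\QQ_{\geq 0}$ says precisely $\kappa(w) \geq 0$ for all $w \in S$, so the $\QQ$-extension of $\kappa$ is nonnegative on $\cone(S) = \Eff(R)$. A line $\QQ v \subseteq \Eff(R)$ would give $\kappa(v) = 0$; expanding $v \neq 0$ over $S$ with positive coefficients then produces a nonzero $w \in S$ with $\kappa(w) = 0$, so $R^{(0)}$ strictly contains $\KK$, contradicting pointedness of the $\ZZ$-grading. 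Hence $\Eff(R)$ contains no line, and $\KK \subseteq R_0 \subseteq R^{(0)} = \KK$ gives $R_0 = \KK$; so the $K$-grading is pointed.

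The substance lies in the converse of part (ii), and this is the step I expect to be the main obstacle. Assuming the $K$-grading pointed, part (i) gives $R_w = 0$ for nonzero torsion $w$, so $S$ injects into the lattice $L := K / K_{\mathrm{tors}}$ and $\Eff(R) = \cone(S)$ is a line-free cone in $L_\QQ = K_\QQ$. I would invoke that $\Eff(R)$ is polyhedral, which is where finite generation of the affine algebra $R$ is needed: for a merely line-free but non-polyhedral $\cone(S)$ a strictly positive functional may fail to exist, so polyhedrality is the crux. Granting it, the closure is a pointed polyhedral cone, its dual $\Eff(R)^{\vee}$ is full-dimensional, and a rational interior point rescaled to be primitive yields $\kappa_0 \in L^{\ast}$ with $\kappa_0 > 0$ on $\Eff(R) \setminus \{0\}$. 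Pulling back along $K \to L$ produces $\kappa \colon K \to \ZZ$ with $\kappa(w) > 0$ for all $w \in S \setminus \{0\}$; then $R^{(d)} = 0$ for $d < 0$, the relation $\kappa(w) = 0$ with $w \in S$ forces $w = 0$ so $R^{(0)} = R_0 = \KK$, and the nonzero degrees in $\kappa(S)$ generate $\QQ_{\geq 0}$. Thus $\kappa$ defines a pointed $\ZZ$-grading with effective cone $\QQ_{\geq 0}$, as required.
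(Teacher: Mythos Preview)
Your argument is correct and follows the same path as the paper's: part~(i) via the observation that a nonzero homogeneous factor of a constant is a unit and hence of degree zero, and part~(ii) via a separating hyperplane/functional for the line-free effective cone. The one notable difference is that you explicitly flag the need for $\Eff(R)$ to be polyhedral (hence for $R$ to be finitely generated) when producing the strictly positive functional, whereas the paper simply asserts that a hyperplane meeting $\Eff(R)$ only at the origin exists; your caution is warranted, and in the paper the lemma is only ever applied to finitely generated $R$.
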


\begin{proof}
We prove~(i). 
It suffices to show that there is no 
non-zero $w \in K$ with $R_w \ne 0$ 
and $R_{-w} \ne 0$.
Consider $f \in R_w$ and $f' \in R_{-w}$,
both being non-zero.
Then $ff'$ is a non-zero element of $R_0$ 
and hence constant. 
Thus, $f$ and $f'$ are both units.
By assumption, we have $w = 0$.

We prove~(ii).
If the $K$-grading is pointed, then we find a 
hyperplane $U \subseteq K_\QQ$ intersecting 
$\Eff(X)$ precisely in the origin.
Let $K_U \subseteq K$ be the subgroup consisting 
of all elements $w \in K$ with $w \otimes 1 \in U$.
Then $K/K_U \cong \ZZ$ holds
and we may assume that the projection 
$\kappa \colon K \to \ZZ$ 
sends the effective cone to the positive ray.
Using~(i), we see that for the induced $\ZZ$-grading 
all homogeneous elements of degree zero
are constant.
The reverse implication is clear according to~(i).
\end{proof}

Let $R$ be a $K$-integral algebra.
A homogeneous non-zero non-unit $f \in R$
is \emph{$K$-irreducible}, if admits no 
decomposition $f = f'f''$ with homogeneous non-zero 
non-units $f',f'' \in R$.
A homogeneous non-zero non-unit $f \in R$ 
is \emph{$K$-prime}, if for any two 
homogeneous $f',f'' \in R$ we have 
that $f \mid f'f''$ implies $f \mid f'$ 
or $f \mid f''$.
Every $K$-prime element is $K$-irreducible.
The algebra $R$ is called  \emph{$K$-factorial},
or the $K$-grading just \emph{factorial},
if $R$ is $K$-integral and every homogeneous 
non-zero non-unit is a product of $K$-primes.
In a $K$-factorial algebra, the $K$-prime 
elements are exactly the $K$-irreducible ones.

An ideal $\mathfrak{a} \subseteq R$ is \emph{homogeneous}
if it is generated by homogeneous elements.
Moreover, an ideal $\mathfrak{a} \subseteq R$ is 
\emph{$K$-prime} if for any two homogeneous 
$f,f' \in R$ we have that 
$ff' \in \mathfrak{a}$ implies $f \in \mathfrak{a}$
or $f' \in \mathfrak{a}$.
A homogeneous ideal $\mathfrak{a} \subseteq R$ is 
$K$-prime if and only if $R /\mathfrak{a}$ is 
$K$-integral.
We say that homogeneous elements
$g_1,\ldots,g_s \in R$ 
\emph{minimally generate} the 
$K$-homogeneous ideal $\mathfrak{a} \subseteq R$
if they generate $\mathfrak{a}$ and no proper 
subcollection of $g_1,\ldots,g_s$ does so.

\begin{lemma}
\label{lem:minimal2prime}
Let $R$ be a $K$-graded algebra
such that the grading is pointed, factorial and 
every homogeneous unit is of degree zero.
If $g_1,\ldots,g_s \in R$ minimally
generate a $K$-prime ideal of $R$,
then each $g_i$ is a $K$-prime element of $R$.
\end{lemma}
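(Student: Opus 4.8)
The plan is to exploit $K$-factoriality to split each generator into $K$-primes, and then use a coarsened $\ZZ$-grading coming from pointedness to show, via a positivity argument, that each generator is already $K$-prime. First I would invoke Lemma~\ref{lem:coarshom}(ii): since the grading is pointed, there is a homomorphism $\kappa \colon K \to \ZZ$ inducing a pointed $\ZZ$-grading of $R$ with effective cone $\QQ_{\ge 0}$ and with degree-zero part equal to $\KK$. The fact I will use repeatedly is that a $K$-homogeneous element $f$ is a non-zero non-unit if and only if $\kappa(\deg f) > 0$: a homogeneous element of $\kappa$-degree $0$ lies in $\KK$, hence is zero or a unit, whereas every non-zero homogeneous element has $\kappa(\deg f) \ge 0$ because $\deg f \in \Eff(R)$ and $\kappa$ maps $\Eff(R)$ into $\QQ_{\ge 0}$. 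I also record that, writing $\mathfrak{a} := \bangle{g_1, \ldots, g_s}$, each $g_i$ is a $K$-homogeneous non-zero non-unit: it is non-zero and non-removable by minimality, and a homogeneous unit among the generators would force $\mathfrak{a} = R$, contradicting that $\mathfrak{a}$ is $K$-prime.

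Next, using $K$-factoriality, I would factor each $g_i = p_{i,1} \cdots p_{i,n_i}$ into $K$-primes. As $g_i \in \mathfrak{a}$ and $\mathfrak{a}$ is $K$-prime, the defining property of a $K$-prime ideal, applied inductively to this product, yields a factor $p_i := p_{i,k} \in \mathfrak{a}$; I then write $g_i = p_i h_i$, where $h_i$ is the $K$-homogeneous product of the remaining factors. Suppose, for contradiction, that some $g_i$ is not $K$-prime. Since an associate of the $K$-prime $p_i$ is again $K$-prime, the cofactor $h_i$ cannot be a unit, and therefore $\kappa(\deg h_i) > 0$.

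The final and decisive step is the positivity argument. As $p_i$ is $K$-homogeneous and lies in the homogeneous ideal $\mathfrak{a}$, I can expand $p_i = \sum_j a_j g_j$ with each $a_j$ $K$-homogeneous of degree $\deg(p_i) - \deg(g_j)$. Evaluating $\kappa$ on the coefficient of $g_i$ gives $\kappa(\deg a_i) = \kappa(\deg p_i) - \kappa(\deg g_i) = -\kappa(\deg h_i) < 0$, which forces $a_i = 0$ because a non-zero homogeneous element never has negative $\kappa$-degree. Hence $p_i \in \bangle{g_j \colon j \ne i}$, and consequently $g_i = p_i h_i \in \bangle{g_j \colon j \ne i}$ as well. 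Thus the proper subcollection $\{g_j \colon j \ne i\}$ already generates $\mathfrak{a}$, contradicting minimality; so every $g_i$ is $K$-prime. I expect this last step to be the crux, and the one point demanding care is the homogeneity of the coefficients $a_j$, which is obtained by extracting the degree-$\deg(p_i)$ components from an arbitrary expression of $p_i$ in terms of the $g_j$; the remainder is bookkeeping with $\kappa$-degrees.
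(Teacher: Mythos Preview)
Your proof is correct and follows essentially the same route as the paper: find a proper homogeneous divisor of $g_i$ lying in the ideal, express it as a homogeneous combination of the generators, and use the $\ZZ$-coarsening from Lemma~\ref{lem:coarshom}(ii) to force the coefficient of $g_i$ to vanish, contradicting minimality. The only cosmetic difference is that you invoke the full $K$-prime factorization to extract a $K$-prime factor $p_i \in \mathfrak{a}$, whereas the paper simply uses that a non-$K$-prime element is non-$K$-irreducible and takes any nontrivial factorization $g_i = g_i' g_i''$; either way the degree argument is identical.
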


\begin{proof}
Assume that $g_1$ is not $K$-prime.
Then $g_1$ is not $K$-irreducible and we 
can write $g_1 = g_1'g_1''$ with homogeneous 
non-zero non-units $g_1', g_1'' \in R$.
As the ideal 
$\bangle{g_1,\ldots,g_s} \subseteq R$
is $K$-prime, it contains one of 
$g_1'$ and $g_1''$, say $g_1'$. 
That means that 
$$ 
g_1' \ = \ h_1g_1 + \ldots + h_sg_s
$$
holds with homogeneous elements $h_i \in R$.
Take a coarsening $K \to \ZZ$ of 
the $K$-grading as provided by 
Lemma~\ref{lem:coarshom}~(ii).
Then the above representation of $g_1'$ 
yields 
$$ 
\deg_\ZZ(g_1')
\ = \ 
\deg_\ZZ(h_1) + \deg_\ZZ(g_1) 
\ = \  
\ldots 
\ = \  
\deg_\ZZ(h_s) + \deg_\ZZ(g_s).
$$
Consequently, $\deg_\ZZ(g_1') \ge \deg_\ZZ(g_1)$ or
$h_1 = 0$.
Since the $\ZZ$-grading of $R$ is pointed, we have 
$\deg_\ZZ(g_1') < \deg_\ZZ(g_1') + \deg_\ZZ(g_1'') 
= \deg_\ZZ(g_1)$. 
Thus, $h_1 = 0$ holds.
This implies $g_1 = g_1'g_1'' \in \bangle{g_2,\ldots,g_s}$.
A contradiction.
\end{proof}

Given a finitely generated abelian group $K$
and $w_1, \ldots, w_r \in K$, there is a unique
$K$-grading on the polynomial algebra 
$\KK[T_1,\ldots, T_r]$ satisfying 
$\deg(T_i) = w_i$ for $i = 1, \ldots, r$.
We call such grading a \emph{linear} grading 
of $\KK[T_1,\ldots, T_r]$.

\goodbreak

\begin{lemma} 
\label{lem:degrel}
Consider a linear $K$-grading on 
$\KK[T_1,\ldots, T_r]$ and a 
$K$-homogeneous $g \in \KK[T_1,\ldots, T_r]$. 
Moreover, let $1 \le i_1, \ldots, i_q \le r$ 
be pairwise distinct.
Assume that $T_{i_1}$ is not a monomial of~$g$
and that $g, T_{i_2}, \ldots, T_{i_q}$ 
minimally generate a $K$-prime ideal in 
$\KK[T_1,\ldots,T_r]$.
Then we have a presentation
$$
\qquad 
\deg(g) 
\ = \ 
\sum
a_j \deg(T_j),
\quad
j \ne i_1, \ldots, i_q,
\
a_j \in \ZZ_{\ge 0}.
$$
\end{lemma}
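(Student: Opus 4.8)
The plan is to reduce everything to finding a single monomial of $g$ that involves none of the variables $T_{i_1},\ldots,T_{i_q}$. Indeed, if $T^m = T_1^{m_1}\cdots T_r^{m_r}$ is such a monomial of $g$, then $m_{i_1}=\cdots=m_{i_q}=0$, and since $T^m$ has degree $\deg(g)$ we immediately get $\deg(g)=\sum_{j\ne i_1,\ldots,i_q} m_j\deg(T_j)$ with $m_j\in\ZZ_{\ge 0}$, which is exactly the asserted presentation. So the entire content of the lemma is the existence of such a monomial.

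First I would reduce modulo the remaining linear generators. Let $\pi\colon \KK[T_1,\ldots,T_r]\to S:=\KK[T_j;\ j\ne i_2,\ldots,i_q]$ be the $K$-graded retraction sending $T_{i_2},\ldots,T_{i_q}$ to $0$, and set $\bar g:=\pi(g)$. Concretely, $\bar g$ collects precisely those monomials of $g$ that involve none of $T_{i_2},\ldots,T_{i_q}$. Minimality of the generators forces $g\notin\bangle{T_{i_2},\ldots,T_{i_q}}$, hence $\bar g\ne 0$. Since $\pi$ identifies $\KK[T_1,\ldots,T_r]/\bangle{g,T_{i_2},\ldots,T_{i_q}}$ with $S/\bangle{\bar g}$ as $K$-graded algebras, the $K$-primality of the ideal on the left makes $S/\bangle{\bar g}$ a $K$-integral ring; that is, $\bangle{\bar g}$ is a $K$-prime ideal of the polynomial ring $S$.

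It then remains to produce a monomial of $\bar g$ not divisible by $T_{i_1}$, and this is the crux. Suppose, to the contrary, that every monomial of $\bar g$ is divisible by $T_{i_1}$, so that $\bar g=T_{i_1}\tilde h$ in $S$. Then $T_{i_1}\tilde h\in\bangle{\bar g}$, and $K$-primality yields $T_{i_1}\in\bangle{\bar g}$ or $\tilde h\in\bangle{\bar g}$. In the latter case $\tilde h=\bar g v$ gives $1=T_{i_1}v$ after cancelling $\bar g\ne 0$ in the domain $S$, which is absurd. In the former case $T_{i_1}=\bar g u$; a total-degree count, using that $\bar g=T_{i_1}\tilde h$ has total degree at least one, forces $u$ to be constant and $\bar g$ to be a scalar multiple of $T_{i_1}$, so that $T_{i_1}$ is a monomial of $\bar g$ and hence of $g$, contradicting the hypothesis that $T_{i_1}$ is not a monomial of $g$. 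Thus $\bar g$, and therefore $g$, has a monomial avoiding $T_{i_1},\ldots,T_{i_q}$, completing the argument.

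The main point to get right is the interplay of the three hypotheses: minimality secures $\bar g\ne 0$, $K$-primality drives the divisibility dichotomy in $S$, and the assumption on $T_{i_1}$ is exactly what closes off the one branch in which $T_{i_1}$ could divide $\bar g$. Everything else is bookkeeping with the gradings, and I expect no pointedness assumption on the grading to be needed for this step, so Lemma~\ref{lem:minimal2prime} can be bypassed here in favour of working directly with the $K$-prime ideal $\bangle{\bar g}$.
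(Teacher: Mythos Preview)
Your proof is correct and follows essentially the same route as the paper's: reduce modulo $T_{i_2},\ldots,T_{i_q}$, use $K$-primality of the resulting principal ideal $\langle \bar g\rangle$ in $S$, and derive that $\bar g$ would have to be a scalar multiple of $T_{i_1}$, contradicting the hypothesis. The only cosmetic difference is that the paper phrases the key step as ``$g_1T_{i_1}$ is a $K$-prime element, hence $K$-irreducible, so $g_1\in\KK^*$'', whereas you unpack the $K$-prime ideal condition into the dichotomy $T_{i_1}\in\langle\bar g\rangle$ or $\tilde h\in\langle\bar g\rangle$ and close each branch by hand; one tiny point worth making explicit is that $\tilde h$ is $K$-homogeneous (which follows since $\bar g$ and $T_{i_1}$ are, and $S$ is a domain), so that $K$-primality indeed applies to the product $T_{i_1}\tilde h$.
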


\begin{proof}
Suppose that $\deg(g)$ allows no
representation as a positive 
combination over the $\deg(T_j)$ with 
$j \not\in \{i_1,\ldots, i_q\}$.
Then each monomial of $g$ must 
have a factor $T_{i_j}$ for some
$j = 1, \ldots, q$. 
Write 
$$ 
g
\ = \ 
g_1 T_{i_1} + g_2 T_{i_2} + \ldots + g_q T_{i_q}
\ = \ 
g_1 T_{i_1} + h
$$
with polynomials $g_j \in \KK[T_1,\ldots, T_r]$
such that $g_1$ depends on none of 
$T_{i_2}, \ldots, T_{i_q}$.
By assumption, $g_1 T_{i_1}$ is non-zero and 
we have a $K$-integral factor ring
$$ 
\KK[T_1, \ldots, T_r] / \bangle{g, T_{i_2}, \ldots, T_{i_q}} 
\ \cong \ 
\KK[T_j; \ j \ne i_2, \ldots, i_r] / \bangle{g_1 T_{i_1}}.
$$
Consequently, $g_1 T_{i_1}$ is a $K$-prime polynomial.
This implies~$g_1 = c \in \KK^*$ and thus we arrive at 
$g = cT_{i_1} + h$; a contradiction to the assumption 
that  $T_{i_1}$ is not a monomial of $g$.
\end{proof}

If $R$ is a finitely generated $K$-graded algebra,
then $R$ admits homogeneous generators 
$f_1,\ldots, f_r$.
Turning the polynomial ring $\KK[T_1, \ldots, T_r]$ 
into a $K$-graded algebra via $\deg(T_i) := \deg(f_i)$,
we obtain an epimorphism of $K$-graded algebras: 
$$
\pi \colon 
\KK[T_1, \ldots, T_r] \ \to \ R,
\qquad
T_i \ \mapsto \ f_i.
$$
Together with a choice of $K$-homogeneous generators 
$g_1, \ldots, g_s$ for the ideal $\ker(\pi)$,
we arrive at \emph{$K$-graded presentation} 
of $R$ by homogeneous generators and relations:
$$ 
R
\ = \ 
\KK[T_1, \ldots, T_r] / \bangle{g_1, \ldots, g_s}.
$$
We call such presentation \emph{irredundant}
if $\ker(\pi)$ contains no
elements of the form $T_i - h_i$ with 
$h_i \in \KK[T_1,\ldots, T_r]$ not depending on $T_i$.

\begin{proposition}
\label{prop:hypmov}
Let $R$ a finitely generated $K$-graded algebra 
such that the grading is pointed, factorial
and every homogeneous unit is of degree zero.
Let 
$$ 
R 
\ = \ 
\KK[T_1, \ldots, T_r] / \bangle{g_1,\ldots,g_s}
$$
be an irredundant $K$-graded presentation
with $\dim(R) = r-s$ such that
$T_1, \ldots, T_r$ define $K$-prime elements 
in $R$. 
Then, for every $l = 1,\ldots, s$, we have 
$$ 
\deg(g_l) 
\ \in \ 
\bigcap_{1 \le i < j \le r}
\cone(\deg(T_k); \ k \ne i, \ k \ne j)
\ \subseteq \
K_\QQ.
$$
\end{proposition}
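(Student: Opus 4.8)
The plan is to argue by contradiction, fixing a relation index $l$ and a pair $1 \le i < j \le r$ and assuming $\deg(g_l) \notin \cone(\deg(T_k); \ k \ne i,\, k\ne j)$. First I would record two consequences of the hypotheses. Since the grading is pointed we have $R_0 = \KK$, so each generator degree $w_k := \deg(T_k)$ is a nonzero element of the pointed cone $\Eff(R)$ (each $T_k$ is a $K$-prime, hence a non-unit, so $\bar T_k \notin \KK = R_0$). And since $\dim(R) = r-s$, Krull's height theorem forces $g_1,\ldots,g_s$ to be a minimal system of generators of $I := \bangle{g_1,\ldots,g_s}$: a redundant $g_l$ would let $s-1$ elements generate $I$, contradicting $\mathrm{ht}(I) = r - \dim(R) = s$. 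The key reduction is that the assumed cone non-membership forces $g_l$ to have no monomial avoiding both $T_i$ and $T_j$, since a monomial $\prod_k T_k^{a_k}$ of $g_l$ with $a_i = a_j = 0$ would, by $K$-homogeneity, exhibit $\deg(g_l)$ as a nonnegative integral combination of the $w_k$ with $k \ne i,j$. Hence $g_l \in \bangle{T_i,T_j}$ in $\KK[T_1,\ldots,T_r]$, and I may write $g_l = T_i p + T_j q$ with $K$-homogeneous $p,q$.

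Next I would promote this to a statement with coefficients in $I$. Passing to $R$, the relation $g_l = 0$ reads $\bar T_i \bar p + \bar T_j \bar q = 0$, so $\bar T_i$ divides $\bar T_j \bar q$. As $\bar T_i$ is $K$-prime, either $\bar T_i \mid \bar T_j$ or $\bar T_i \mid \bar q$. The first case is impossible: since $\bar T_j$ is $K$-irreducible and $\bar T_i$ a non-unit, $\bar T_j = \bar T_i \bar u$ forces $\bar u$ to be a homogeneous unit, hence (using $R_0 = \KK$) a nonzero scalar $c$ with $w_i = w_j$; but then $T_j - c T_i \in I$ violates irredundancy. Thus $\bar T_i \mid \bar q$, say $\bar q = \bar T_i \bar q_1$, and cancelling $\bar T_i$ in the $K$-integral ring $R$ gives $\bar p = -\bar T_j \bar q_1$. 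Lifting $\bar q_1$ to a homogeneous $Q_1 \in \KK[T]$, both $q - T_i Q_1$ and $p + T_j Q_1$ lie in $I$, and substituting these back the cross terms $\pm T_i T_j Q_1$ cancel, leaving $g_l = T_i a + T_j b$ with $a,b \in I$.

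This is the desired contradiction. Writing $\mathfrak{m} = \bangle{T_1,\ldots,T_r}$, the identity $g_l = T_i a + T_j b$ shows $g_l \in \mathfrak{m}I$. On the other hand, choosing via Lemma~\ref{lem:coarshom}(ii) a coarsening $\kappa \colon K \to \ZZ$ strictly positive on all the $w_k$ turns $\KK[T]$ into a connected $\ZZ$-graded ring with irrelevant ideal $\mathfrak{m}$, for which $I$ is homogeneous and $g_1,\ldots,g_s$ are homogeneous minimal generators; graded Nakayama then identifies their residues with a $\KK$-basis of $I/\mathfrak{m}I$, so no $g_l$ lies in $\mathfrak{m}I$. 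This contradiction yields $\deg(g_l) \in \cone(\deg(T_k);\ k \ne i,\, k \ne j)$ for every pair, and intersecting over all pairs gives the assertion. For $s=1$ the same conclusion is already immediate from Lemma~\ref{lem:degrel} applied with $i_1 = i$, $i_2 = j$, once one checks that $T_i$ is not a monomial of $g$ and that $\bangle{g,T_j}$ is $K$-prime.

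The step I expect to be most delicate is the passage from $g_l \in \bangle{T_i,T_j}$ to $g_l \in \mathfrak{m}I$: it is here that the $K$-primality of the individual variables, the integrality of $R$, and irredundancy must be combined, and one must check carefully that the excluded ``associate'' case $w_i = w_j$ is genuinely ruled out rather than merely atypical. The remaining subtlety is formal but essential: graded Nakayama requires the grading group to be coarsened to $\ZZ$ so that $\mathfrak{m}$ becomes the irrelevant ideal and the $K$-homogeneous $g_l$ --- which need not be homogeneous for the total degree --- remain homogeneous minimal generators.
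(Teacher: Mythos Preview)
Your proof is correct and takes a genuinely different route from the paper's. The paper, for fixed $i < j$, specializes $T_j \mapsto 0$ and uses the dimension hypothesis to check that $g_{1,j},\ldots,g_{s,j},T_j$ minimally generate the $K$-prime ideal $I_j = \langle g_1,\ldots,g_s,T_j\rangle$; Lemma~\ref{lem:minimal2prime} then forces each $g_{l,j}$ itself to be $K$-prime, so $\langle g_l, T_j\rangle$ is a $K$-prime ideal minimally generated by two elements, and Lemma~\ref{lem:degrel} (with $i_1=i$, $i_2=j$) yields the cone membership directly. Your argument is by contradiction and stays entirely in the full quotient $R$: if $\deg(g_l)$ misses the cone then $g_l \in \langle T_i, T_j\rangle$, and the $K$-primality of $\bar T_i$ in $R$ lets you upgrade this to $g_l \in \mathfrak{m}I$; the complete intersection hypothesis (via Krull's height theorem) together with graded Nakayama for the $\ZZ$-coarsened grading then gives the contradiction. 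Your approach is more self-contained --- it bypasses Lemma~\ref{lem:minimal2prime} entirely and uses only the $K$-primality of the variables in $R$, which is already given --- whereas the paper's is more modular, reusing its own lemmas, and produces as a byproduct the fact that each specialization $g_{l,j}$ is $K$-prime in the smaller polynomial ring.
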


\begin{proof}
It suffices to show that for any two 
$1 \le i < j \le r$, we can 
represent each $\deg(g_l)$ as a positive 
combination over the $\deg(T_k)$,
where $k \ne i,j$.
For $l = 1, \ldots, s$, set
$$ 
g_{l,j} 
\ := \ 
g_l(T_1, \ldots, T_{j-1},0,T_{j+1}, \ldots, T_r)
\ \in \ 
\KK[T_1, \ldots, T_r].
$$ 
Since $T_j$ defines a $K$-prime element in $R$,
the ideal $\bangle{T_j} \subseteq R$ is $K$-prime
and $\bangle{T_j}$ lifts to a $K$-prime ideal 
$$ 
I_j
\ := \ 
\bangle{g_1,\ldots,g_s,T_j}
\ = \ 
\bangle{g_{1,j},\ldots,g_{s,j},T_j}
\ \subseteq \
\KK[T_1, \ldots, T_r].
$$
Then $\KK[T_1, \ldots, T_r]/I_j$ is isomorphic 
to $R/\bangle{T_j}$. 
The latter algebra is of dimension $r-s-1$ 
due to our assumptions.
Thus, $g_{1,j},\ldots,g_{s,j},T_j$
minimally generate $I_j$. 
By Lemma~\ref{lem:minimal2prime}, 
each $g_{l,j}$ is $K$-prime and hence
defines a $K$-integral factor algebra
$$ 
\KK[T_m; \ m \ne j] / \bangle{g_{l,j}}
\ \cong \ 
\KK[T_1, \ldots, T_r] / \bangle{g_l,T_j}.
$$
We conclude that $g_l,T_j$ minimally 
generate a $K$-prime ideal in 
$\KK[T_1,\ldots,T_r]$.
Thus, we may apply Lemma~\ref{lem:degrel}
and obtain the assertion.
\end{proof}

We turn to the geometric point of view.
So, $\KK$ is now algebraically closed of 
characteristic zero and $R$ an affine 
$K$-graded algebra, where affine means 
that $R$ is finitely generated over $\KK$ 
and has no nilpotent elements.
Then we have the affine 
variety $\bar{X}$ with $R$ as its algebra 
of global functions and the quasitorus $H$ 
with $K$ as its character group:
$$ 
\bar{X} \ = \ \Spec \, R,
\qquad\qquad 
H \ = \ \Spec \, \KK[K].
$$ 
The $K$-grading of $R$ defines an action of 
$H$ on $\bar{X}$, which is uniquely 
determined by the property that each  
$f \in R_w$ satisfies 
$f(h \cdot x)  = \chi^w(h) f(x)$
for all $x \in \bar{X}$ and $h \in H$,
where $\chi^w$ is the character corresponding 
to $w \in K$.
We take a look at the geometric invariant 
theory of the $H$-action on $\bar{X}$;
see~\cite{BeHa1,ADHL}.
The \emph{orbit cone} $\omega_x \subseteq K_\QQ$ 
associated with $x \in \bar{X}$ and the 
\emph{GIT-cone} $\lambda_w \subseteq K_\QQ$
associated with $w \in \Eff(R)$ are defined as  
$$ 
\omega_x \ = \ \cone(w \in K; \ f(x) \ne 0 \text{ for some } f \in R_w),
\qquad
\lambda_w \ := \ \bigcap_{x \in \bar{X}, w \in \omega_x} \omega_x.
$$
Orbit cones as well as GIT-cones are convex 
polyhedral cones and there are only finitely many of them.
The basic observation is that the GIT-cones form 
a fan $\Lambda(R)$ in $K_\QQ$, the \emph{GIT-fan}, having 
the effective cone $\Eff(R)$ as its support.

\begin{remark}
\label{rem:barXfaces} 
Let $K$ be a finitely generated abelian group
and $R$ a $K$-integral affine algebra.
Fix a $K$-graded presentation
$$ 
R 
\ = \ 
\KK[T_1, \ldots, T_r] / \bangle{g_1,\ldots,g_s}.
$$
This yields an $H$-equivariant closed embedding
$ 
\bar{X} 
= 
V(g_1,\ldots,g_s) 
\subseteq 
\KK^r
$
of affine varieties.
Moreover, we have a homomorphism
$$ 
Q \colon 
\ZZ^r \ \to \ K, 
\qquad 
\nu \ \mapsto \ \nu_1 \deg(T_1) + \ldots +  \nu_r \deg(T_r).
$$
An \emph{$\bar{X}$-face} is a face $\gamma_0 \preceq \gamma$ 
of the orthant $\gamma := \QQ_{\ge 0}^r$ admitting
a point $x \in \bar{X}$ such that one has 
$$ 
x_i \ne 0 \ \iff \ e_i \in \gamma_0
$$
for the coordinates $x_1, \ldots, x_r$ of $x$ 
and the canonical basis vectors $e_1, \ldots, e_r \in \ZZ^r$.
Write $\mathfrak{S}(\bar{X})$ for the set of all
$\bar{X}$-faces of $\gamma \subseteq \QQ^r$.
Then we have 
$$
\{Q(\gamma_0); \ \gamma_0 \in \mathfrak{S}(\bar{X})\}
\ =  \
\{\omega_x; \ x \in \bar{X}\}.
$$
That means that the projected $\bar{X}$-faces 
are exactly the orbit cones.
The $\bar{X}$-faces define a 
decomposition into locally closed 
subsets
$$ 
\bar{X}
\ = \ 
\bigcup_{\gamma_0 \in \mathfrak{S}(X)}
\bar{X}(\gamma_0),
\qquad
\bar{X}(\gamma_0) 
\ := \ 
\{x \in \bar{X}; \ x_i \ne 0  \Leftrightarrow  e_i \in \gamma_0\}
\ \subseteq \
\bar{X}.
$$
\end{remark}

\begin{definition}
\label{def:gI}
Let $I = \{i_1, \ldots, i_k\}$ be a subset of 
$\{1, \ldots, r\}$. 
Then the face $\gamma_I$ of the orthant 
$\gamma = \QQ_{\ge 0}^r$ associated with $I$ 
is defined as 
$$ 
\gamma_I 
\ := \ 
\gamma_{i_1, \ldots, i_k}
\ := \ 
\cone(e_{i_1}, \ldots, e_{i_k}).
$$
Moreover, for a polynomial $g \in \KK[T_1,\ldots,T_r]$,
the polynomial $g_I$ associated with $I$ is 
defined as 
$$
g_I \ := \ g(\tilde T_1,\ldots, \tilde T_r),
\qquad
\tilde T_i 
\ :=  \
\begin{cases}
T_i, & i \in I,
\\
0,  & i \not\in I.
\end{cases}
$$
\end{definition}

\begin{remark}
\label{rem:barXfacecrit}
In the setting of Remark~\ref{rem:barXfaces},
let $I = \{i_1, \ldots, i_k\}$ be a subset of 
$\{1, \ldots, r\}$.
\begin{enumerate}
\item
$\gamma_I$ is an $\bar X$-face if and only if
$\bangle{g_{1,I}, \ldots, g_{s,I}}$ 
contains no monomial.
\item
If $\deg(g_j) \not\in \cone(w_i; \ i \in I)$ 
holds for $j = 1, \ldots, s$, then 
$\gamma_I$ is an $\bar X$-face.
\item 
If $(w_i; \ i \in I)$ is linearly independent
in $K$, then $\gamma_I$ is an $\bar X$-face if 
and only if none of $g_{1}, \ldots, g_{s}$
has a monomial $T_{i_1}^{l_1} \cdots T_{i_k}^{l_k}$
with $l_1, \ldots, l_k \in \ZZ_{\ge 0}$.
\end{enumerate}
\end{remark}

\begin{proposition}
\label{prop:degincone}
Let $K$ be a finitely generated abelian 
group and $R$ an affine algebra
with a pointed $K$-grading.
Consider a $K$-graded presentation
$$ 
R 
\ = \ 
\KK[T_1, \ldots, T_r] / \bangle{g_1,\ldots,g_s}
$$
such that $T_1,\ldots,T_r$ define non-constant
elements in $R$.
Assume that there are a GIT-cone $\lambda \in \Lambda(R)$ 
of dimension at least two and an index~$i$ with 
$\deg(T_i) \in \lambda^\circ$.
\begin{enumerate}
\item 
There exists a $j$ such that $g_j$ has a 
monomial $T_i^{l_i}$ with $l_i \in \ZZ_{\ge 0}$.
\item 
There exists a $j$ such that $\deg(g_j) = l_i \deg(T_i)$ 
holds with $l_i \in \ZZ_{\ge 0}$.
\item
If $s=1$ holds, then, $\deg(T_k)$ generates a
ray of $\Lambda(R)$
whenever $k \ne i$. 
\end{enumerate}
\end{proposition}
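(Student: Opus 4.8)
The plan is to run everything through the dictionary between monomials of the relations, $\bar X$-faces, orbit cones and GIT-cones provided by Remarks~\ref{rem:barXfaces} and~\ref{rem:barXfacecrit}, together with the one structural fact about the GIT-fan that I shall use repeatedly: for every $w \in \Eff(R)$ the weight $w$ lies in the relative interior of its GIT-cone $\lambda_w = \bigcap_{x,\, w \in \omega_x} \omega_x$, and distinct GIT-cones have disjoint relative interiors. In particular the hypothesis $\deg(T_i) \in \lambda^\circ$ with $\dim \lambda \ge 2$ says precisely that $\lambda = \lambda_{w_i}$, where $w_i := \deg(T_i)$. Note also that $w_i \ne 0$ since $T_i$ is non-constant and the grading is pointed, and that $w_i$ is non-torsion by Lemma~\ref{lem:coarshom}(i); hence the one-element family $(w_i)$ is linearly independent and Remark~\ref{rem:barXfacecrit}(iii) applies to it.

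First I would settle parts~(i) and~(ii). Suppose $\gamma_{\{i\}}$ were an $\bar X$-face. Then its image $Q(\gamma_{\{i\}}) = \cone(w_i) = \QQ_{\ge 0} w_i$ is an orbit cone containing $w_i$, hence it occurs among the cones intersected in $\lambda_{w_i}$, forcing $\lambda = \lambda_{w_i} \subseteq \QQ_{\ge 0} w_i$ and so $\dim \lambda \le 1$, a contradiction. Thus $\gamma_{\{i\}}$ is not an $\bar X$-face, and Remark~\ref{rem:barXfacecrit}(iii) yields some $g_j$ carrying a monomial $T_i^{l_i}$ with $l_i \in \ZZ_{\ge 0}$; this is~(i). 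Part~(ii) is then immediate: being $K$-homogeneous, $g_j$ has all its monomials of one degree, so $\deg(g_j) = \deg(T_i^{l_i}) = l_i w_i$.

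For part~(iii) we have $s = 1$; write $g = g_1$ and $\mu = \deg(g) = l_i w_i$ by~(ii), noting $\mu \ne 0$ since otherwise pointedness and $w_j \ne 0$ would force $g$ to be a nonzero constant. Fix $k \ne i$. The key step is to exclude a pure power $T_k^m$ in $g$. Indeed, if $g$ had such a monomial, homogeneity would give $\mu = m w_k$, whence $m w_k = l_i w_i$ with $m, l_i \ge 1$ and therefore $w_k \in \QQ_{\ge 0} w_i$. Then $g_{\{i,k\}}$ contains the two distinct monomials $T_i^{l_i}$ and $T_k^m$, so it is not a monomial and $\gamma_{\{i,k\}}$ is an $\bar X$-face by Remark~\ref{rem:barXfacecrit}(i); its orbit cone $\cone(w_i, w_k) = \QQ_{\ge 0} w_i$ is a ray containing $w_i$, again squeezing $\lambda_{w_i}$ into a ray and contradicting $\dim \lambda \ge 2$. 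Hence $g$ has no pure power of $T_k$, so Remark~\ref{rem:barXfacecrit}(iii) shows that $\gamma_{\{k\}}$ is an $\bar X$-face. Consequently $\QQ_{\ge 0} w_k = Q(\gamma_{\{k\}})$ is an orbit cone, giving $\lambda_{w_k} \subseteq \QQ_{\ge 0} w_k$; as $w_k \in \lambda_{w_k}$ and $w_k \ne 0$, this forces $\lambda_{w_k} = \QQ_{\ge 0} w_k$, i.e.\ $\deg(T_k)$ generates a ray of $\Lambda(R)$.

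The main obstacle I anticipate is the exclusion of pure powers in part~(iii): one must notice that a second pure power $T_k^m$ would make $w_k$ parallel to $w_i$ and produce an orbit cone that is a ray through $w_i$, thereby collapsing the given two-dimensional GIT-cone $\lambda_{w_i}$. Everything else reduces to bookkeeping with the face criterion of Remark~\ref{rem:barXfacecrit} and the elementary observation that a convex cone contained in a ray but still containing the nonzero weight must be that whole ray; the only genuine external inputs are the relative-interior property of GIT-cones and the non-torsion of the $w_j$ guaranteed by Lemma~\ref{lem:coarshom}.
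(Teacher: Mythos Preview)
Your proof is correct and follows essentially the same route as the paper's. Both arguments show that $\gamma_{\{i\}}$ cannot be an $\bar X$-face (else the ray $\QQ_{\ge 0}w_i$ would be an orbit cone forcing $\dim\lambda\le 1$) to obtain~(i) and~(ii), and for~(iii) both use that a second pure power $T_k^{l_k}$ in $g$ would make $\gamma_{i,k}$ an $\bar X$-face whose projected cone is the ray through $w_i$, collapsing $\lambda$. The only difference is organizational: the paper assumes the GIT-cone of $w_k$ has dimension at least two and reapplies~(i) to produce $T_k^{l_k}$, whereas you exclude such a monomial first and then observe directly that $\gamma_{\{k\}}$ is an $\bar X$-face; this is the same contradiction arranged contrapositively, and your version is slightly more explicit about why the resulting orbit cone pins down $\lambda_{w_k}$ as a ray.
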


\begin{proof}
Because of $\deg(T_i) \in \lambda^\circ$, 
the ray $\tau$ generated by $\deg(T_i)$
is not an orbit cone.
Thus, $\QQ_{\ge 0} e_i$ is not an $\bar{X}$-face.
This means that some~$g_j$
has a monomial $T_i^{l_i}$, 
which in particular proves~(i) and~(ii).
To obtain~(iii), first observe that 
$\deg(T_k) \in K_\QQ$ is non-zero and 
thus lies in the relative interior of 
some GIT-cone 
$\varrho \in \Lambda(R)$
of positive dimension.  
Suppose that $\varrho$ is not a ray.
Then~(i) yields that besides $T_i^{l_i}$ 
also $T_k^{l_k}$ is a monomial of the 
relation $g_1$.
We conclude that $\gamma_{i,k}$ is an 
$\bar{X}$-face.
Thus, $\deg(T_i)$ and $\deg(T_k)$ lie on a ray of 
$\Lambda(R)$.
A contradiction.
\end{proof}

\section{Mori dream spaces}

Mori dream spaces, introduced in~\cite{HuKe},
behave optimally with respect to the minimal 
model programme and are characterized as the
normal projective varieties with finitely 
generated Cox ring.
Well known example classes are 
the projective toric or spherical varieties
and, most important for the present article, 
the smooth Fano varieties.
In this section, we provide a brief summary 
of the combinatorial approach~\cite{BeHa2,Ha,ADHL} 
to Mori dream spaces, adapted to 
our needs.
Moreover, as a new observation, we present 
Proposition~\ref{prop:qsmooth2degrees},
locating the relation degrees of a Cox ring 
inside the effective cone of a quasismooth 
Mori dream space.

Let $\KK$ be an algebraically closed 
field of characteristic zero,
$R$ be a $K$-graded affine $\KK$-algebra
and consider the action of 
$H = \Spec \, \KK[K]$ on
variety $\bar X  = \Spec \, R$.
Mori dream spaces are obtained as quotients
of the $H$-action. 
We briefly recall the general framework.
Each cone $\lambda \in \Lambda(R)$ 
of the GIT-fan defines 
an $H$-invariant open set of 
\emph{semistable points}
and a \emph{good quotient}:
$$ 
\bar{X}^{ss}(\lambda) 
\ =  \ 
\{x \in \bar{X}; \ \lambda \subseteq \omega_x\}
\ \subseteq \ 
\bar{X},
\qquad\qquad
\bar{X}^{ss}(\lambda) \ \to \  \bar{X}^{ss}(\lambda) \quot H,  
$$
where $\omega_x \subseteq K_\QQ$ denotes the 
orbit cone of $x \in \bar X$.
Each of the quotient varieties $ \bar{X}^{ss}(\lambda) \quot H$ 
is projective over 
$\Spec \, R_0$ and whenever $\lambda' \subseteq \lambda$ 
holds for two GIT-cones, then we have 
$\bar{X}^{ss}(\lambda) \subseteq \bar{X}^{ss}(\lambda')$
and thus an induced projective morphism 
$\bar{X}^{ss}(\lambda) \quot H \to  \bar{X}^{ss}(\lambda') \quot H$
of the quotient spaces.

The $K$-grading of $R$ is \emph{almost free} if
the (open) set $\bar{X}_0 \subseteq \bar{X}$ 
of points $x \in \bar{X}$ with trivial isotropy 
group $H_x \subseteq H$ has complement of 
codimension at least two in~$\bar{X}$.
Moreover, the \emph{moving cone} of $R$ 
is the convex cone $\Mov(R) \subseteq K_\QQ$ 
obtained as the union over all 
$\lambda \in \Lambda(R)$, where 
$\bar{X}^{ss}(\lambda)$ has a complement of 
codimension at least two in $\bar{X}$.

\begin{remark}
\label{rem:EffRMovR}
Let $R$ be a $K$-graded affine algebra such 
that the grading is factorial and any homogeneous 
unit is constant.
Then $R$ admits a system $f_1, \ldots, f_r$ 
of pairwise non-associated $K$-prime generators.
Moreover, if $f_1, \ldots, f_r$ is such 
a system of generators for $R$, then the following 
holds.
\begin{enumerate}
\item
The $K$-grading is almost free if and only if 
any $r-1$ of $\deg(f_1), \ldots, \deg(f_r)$ 
generate $K$ as a group.
\item
If the $K$-grading is almost free,
then the orbit cones $\omega_x$, 
where $x \in \bar{X}$,
and the moving cone are given by 
\begin{eqnarray*} 
\omega_x & = & \cone(\deg(f_i); \ f_i(x) \ne 0),
\\
\Mov(R) 
& = &
\bigcap_{i=1}^r
\cone(\deg(f_j); \ j \ne i).
\end{eqnarray*}
\end{enumerate}
\end{remark}

We say that a $K$-graded affine $\KK$-algebra~$R$
is an \emph{abstract Cox ring} if it is 
integral, normal, has only constant homogeneous 
units, the $K$-grading is almost free, 
pointed, factorial and the moving cone $\Mov(R)$ 
is of full dimension in $K_\QQ$.

\begin{construction} 
\label{constr:mds}
Let $R$ be an abstract Cox ring and 
consider the action of the quasitorus 
$H = \Spec \, \KK[K]$ on the affine 
variety $\bar X  = \Spec \, R$.
For every GIT-cone $\lambda \in \Lambda(R)$ 
with
$\lambda^\circ \subseteq \Mov(R)^\circ$, 
we set
$$ 
X(\lambda) 
\ := \ 
\bar X^{ss}(\lambda) \quot H .
$$
\end{construction}

The following proposition tells us
in particular that Construction~\ref{constr:mds} 
delivers Mori dream spaces;
see~\cite[Thm.~3.2.14, Prop.~3.3.2.9 and Rem.~3.3.4.2]{ADHL}.

\begin{proposition}
\label{prop:mdsprops}
Let $X = X(\lambda)$ arise from
Construction~\ref{constr:mds}.
Then $X$ is normal, projective 
and of dimension $\dim(R)-\dim(K_\QQ)$.
The divisor class group and 
the Cox ring of $X$ are given as
$$ 
\Cl(X) \ = \ K,
\qquad\qquad
\mathcal{R}(X) 
\ = \ 
\bigoplus_{\Cl(X)} \Gamma(X,\mathcal{O}_X(D))
\ = \ 
\bigoplus_{K} R_w 
\ = \ 
R.
$$
Moreover, the cones of effective, 
movable, semiample and ample 
divisor classes of~$X$ are given 
in $\Cl_\QQ(X) = K_\QQ$ as 
$$ 
\Eff(X) \ = \ \Eff(R), 
\qquad
\Mov(X) \ = \  \Mov(R), 
$$
$$
\SAmple(X) = \lambda,
\qquad
\Ample(X) = \lambda^\circ.
$$
\end{proposition}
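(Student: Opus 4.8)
The plan is to deduce the whole statement from the general theory of Mori dream spaces and their Cox rings developed in~\cite{ADHL}, the point being that the defining properties of an abstract Cox ring are precisely tailored so that $\bar X = \Spec R$ together with the $H$-action plays the role of the total coordinate space of each quotient $X(\lambda)$. First I would record what each hypothesis delivers: integrality and normality of $R$ make $\bar X$ an irreducible normal affine variety; pointedness of the grading forces $R_0 = \KK$, so that each good quotient $\bar X^{ss}(\lambda) \quot H$ is projective over $\Spec R_0 = \Spec \KK$; almost freeness guarantees that generic $H$-orbits are free, whence $\dim X(\lambda) = \dim \bar X - \dim H = \dim(R) - \dim(K_\QQ)$; and the condition $\lambda^\circ \subseteq \Mov(R)^\circ$ together with full-dimensionality of $\Mov(R)$ ensures that $\lambda$ is full-dimensional and lies in the relevant chamber region, so that $X(\lambda)$ is a genuine $(\dim(R)-\dim(K_\QQ))$-dimensional variety rather than a degenerate quotient.

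The geometric assertions --- normality, projectivity, dimension --- then follow from the standard behaviour of good quotients: the semistable locus $\bar X^{ss}(\lambda)$ is an $H$-invariant open subset of the normal variety $\bar X$, hence normal, and good quotients preserve normality and yield varieties projective over $\Spec R_0$. For this step I would simply invoke the GIT description already summarised before Construction~\ref{constr:mds}, so no new work is required there.

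The heart of the matter, and the step I expect to be the main obstacle, is the identification $\Cl(X) = K$ and $\RRR(X) = R$. Here one uses that a normal affine algebra whose $K$-grading is factorial, pointed and has only constant homogeneous units is, intrinsically, the Cox ring of its quotients, and that almost freeness makes the quotient map $\bar X^{ss}(\lambda) \to X(\lambda)$ a characteristic space with trivial isotropy in codimension one; this is exactly what is needed to read off $\Cl(X) = K$ and to recover $\RRR(X) = \bigoplus_K R_w = R$. I would obtain this by checking that the hypotheses of~\cite[Thm.~3.2.14]{ADHL} are met and quoting its conclusion, rather than reproving the characterisation of Cox rings by factorial gradings from scratch.

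Finally, for the cones I would combine the explicit descriptions of $\Eff(R)$ and $\Mov(R)$ from Remark~\ref{rem:EffRMovR} with the fact that, under the identification $\Cl_\QQ(X) = K_\QQ$, the GIT-fan $\Lambda(R)$ restricted to $\Mov(R)$ realises the Mori chamber decomposition of $X$. This gives $\Eff(X) = \Eff(R)$ and $\Mov(X) = \Mov(R)$ directly, while $\SAmple(X(\lambda)) = \lambda$ and $\Ample(X(\lambda)) = \lambda^\circ$ hold because $X(\lambda)$ is by construction the quotient attached to the chamber $\lambda$; the precise matching is~\cite[Prop.~3.3.2.9]{ADHL}, with the remaining dimension bookkeeping supplied by~\cite[Rem.~3.3.4.2]{ADHL}.
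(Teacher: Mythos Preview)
Your proposal is correct and mirrors the paper's own treatment: the paper does not give a self-contained proof but simply refers to \cite[Thm.~3.2.1.4, Prop.~3.3.2.9 and Rem.~3.3.4.2]{ADHL}, which are precisely the references you invoke. Your write-up is in fact more explicit than the paper about which hypothesis on an abstract Cox ring feeds into which conclusion, but the underlying route is identical.
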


By~\cite[Cor.~3.2.1.11]{ADHL},
all Mori dream space arise from 
Construction~\ref{constr:mds}.
For the subsequent work, we have to get 
more concrete, meaning that we will 
work in terms of generators and relations.

\begin{construction} 
\label{rem:Coxconst}
Let $R$ be an abstract Cox ring 
and $X = X(\lambda)$ be as in
Construction~\ref{constr:mds}.
Fix a $K$-graded presentation
$$ 
R 
\ = \ 
\KK[T_1, \ldots, T_r] / \bangle{g_1,\ldots,g_s}
$$
such that the variables $T_1, \ldots, T_r$ 
define pairwise non-associated $K$-primes
in $R$. 
Consider the orthant $\gamma = \QQ_{\ge 0}^r$
and the projection
$$ 
Q \colon \ZZ^r \ \to \ K,
\qquad
e_i \ \mapsto \ w_i := \deg(T_i).
$$
An \emph{$X$-face} is 
an $\bar{X}$-face $\gamma_0 \preceq \gamma$ 
with $\lambda^{\circ} \subseteq Q(\gamma_0)^{\circ}$.
Let $\rlv(X)$ be the set of all $X$-faces 
and $\pi \colon \bar{X}^{ss}(\lambda) \to X$ 
the quotient map.
Then we have a decomposition
$$ 
X 
\ = \ 
\bigcup_{\gamma_0 \in \rlv(X)}
X(\gamma_0)
$$
into pairwise disjoint locally closed sets
$X(\gamma_0) := \pi(\bar{X}(\gamma_0))$,
which we also call the \emph{pieces} of~$X$.
\end{construction}

Recall that~$X$ is 
\emph{$\QQ$-factorial} if for every 
Weil divisor on $X$ some non-zero multiple 
is locally principal.
Moreover, $X$ is \emph{locally factorial} 
if every stalk~$\mathcal{O}_x$, 
where $x \in X$ is a (closed)
point, is a unique factorization domain.
Finally, $X$ is \emph{quasismooth}
if the open set 
$\bar X^{ss}(\lambda) \subseteq \bar X$ 
of semistable points is a smooth variety.

\begin{proposition} 
\label{prop:locprops}
Consider the situation of
Construction~\ref{rem:Coxconst}.
\begin{enumerate}
\item 
The variety $X$ is $\QQ$-factorial, if and 
only if $\dim(\lambda) = \dim(K_\QQ)$ holds
for $\lambda = \SAmple(X)$.
\item
The variety $X$ is locally factorial if and 
only if for every $X$-face 
$\gamma_0 \preceq \gamma$,
the group $K$ is generated by 
$Q(\gamma_0 \cap \ZZ^r)$.
\item 
The variety $X$ is quasismooth if and 
only if every $\bar X(\gamma_0)$ 
consists of smooth points of 
$\bar X$ for every $X$-face
$\gamma_0 \preceq \gamma$.
\item 
The variety $X$ is smooth if and only if $X$ 
is locally factorial and quasismooth.
\end{enumerate}
\end{proposition}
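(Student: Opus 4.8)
The plan is to prove the four equivalences in Proposition~\ref{prop:locprops} by translating each smoothness-type condition on the quotient variety $X = X(\lambda)$ into a combinatorial condition on the $X$-faces, using the quotient presentation $\pi\colon \bar X^{ss}(\lambda) \to X$ from Construction~\ref{rem:Coxconst}. The unifying principle is that properties of $X$ can be checked on the pieces $X(\gamma_0)$, and each piece is the quotient image of the locally closed set $\bar X(\gamma_0) \subseteq \bar X^{ss}(\lambda)$; moreover, for $x \in \bar X(\gamma_0)$ the isotropy group $H_x \subseteq H$ is governed by the sublattice $Q(\gamma_0 \cap \ZZ^r) \subseteq K$. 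So the overall strategy is: fix an $X$-face $\gamma_0$, pick $x \in \bar X(\gamma_0)$ with image $\pi(x) \in X(\gamma_0)$, and relate the local ring $\OOO_{X,\pi(x)}$ to the $H$-action on a neighbourhood of $x$ in $\bar X$.

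\medskip

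\emph{For (i)}, $\QQ$-factoriality is equivalent to $\Cl_\QQ(X)$ being spanned by the ample cone, i.e.\ to $\Ample(X) = \lambda^\circ$ having full dimension $\dim(K_\QQ)$; by Proposition~\ref{prop:mdsprops} the semiample cone $\SAmple(X) = \lambda$ has the same dimension as its interior $\lambda^\circ$, so this is exactly $\dim(\lambda) = \dim(K_\QQ)$. I would cite the standard Cox-ring characterization that $X$ is $\QQ$-factorial iff its class group has finite rank equal to the Picard rank, which here is the full-dimensionality of the GIT-cone $\lambda$. \emph{For (ii)}, local factoriality means every stalk is a UFD; the key input is that the stalk at $\pi(x)$ is, up to the torus quotient, the semilocal ring of $\bar X$ at the orbit through $x$, and its class group is the cokernel of $Q$ restricted to the sublattice generated by $\gamma_0 \cap \ZZ^r$. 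Thus the stalk is factorial precisely when $Q(\gamma_0 \cap \ZZ^r)$ generates $K$; ranging over all $X$-faces gives the stated criterion. \emph{For (iii)}, quasismoothness is by definition smoothness of $\bar X^{ss}(\lambda)$, and since $\bar X^{ss}(\lambda) = \bigcup_{\gamma_0 \in \rlv(X)} \bar X(\gamma_0)$, it is smooth iff each $\bar X(\gamma_0)$ consists of smooth points of $\bar X$; this is essentially the definition unwound through the relevant-face decomposition.

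\medskip

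\emph{For (iv)}, the forward implication is immediate: a smooth variety is certainly locally factorial (regular local rings are UFDs) and quasismooth (the quotient map $\pi$ is geometrically a good quotient, and smoothness of $X$ forces smoothness of $\bar X^{ss}(\lambda)$ since locally, near a free orbit, $\bar X^{ss}(\lambda)$ is a torus bundle over $X$). The reverse implication is where the real work sits: assuming local factoriality and quasismoothness, I want to conclude $X$ is smooth. The idea is that local factoriality forces the isotropy groups along the relevant faces to be trivial (the lattice condition $Q(\gamma_0 \cap \ZZ^r) = K$ says exactly that $H$ acts freely on $\bar X(\gamma_0)$), so the good quotient $\pi$ is a geometric quotient that is étale-locally a trivial $H$-bundle; combined with quasismoothness of the total space $\bar X^{ss}(\lambda)$, this descends smoothness to $X$.

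\medskip

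The main obstacle is precisely this last step: carefully establishing that local factoriality of $X$ is equivalent to freeness of the $H$-action on the relevant part of $\bar X^{ss}(\lambda)$, and then that a free quasitorus action with smooth total space yields a smooth quotient. The delicate point is the interplay between the algebraic (class-group / UFD) formulation of local factoriality and the group-theoretic (isotropy / lattice-generation) formulation, bridged by the fact that for a quasitorus $H$ the local class group of the quotient at $\pi(x)$ is the character group of the isotropy $H_x$. I would handle this by invoking the standard toric-type descent: once $H_x$ is trivial, $\pi$ is smooth near $x$, so $\OOO_{X,\pi(x)}$ and $\OOO_{\bar X, x}$ are regular simultaneously, and quasismoothness supplies the regularity of the latter. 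The remaining equivalences (i)--(iii) are then largely bookkeeping against the $X$-face decomposition and the results already recorded in Proposition~\ref{prop:mdsprops} and Remark~\ref{rem:EffRMovR}.
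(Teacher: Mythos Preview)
The paper does not give a proof of this proposition at all; immediately after the statement it simply writes ``We refer to~[ADHL, Cor.~1.6.2.6, Cor.~3.3.1.8, Cor.~3.3.1.9] for the above statements.'' So there is nothing to compare against except the cited reference.

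Your sketch follows essentially the same route as the proofs in~[ADHL]: reduce to the pieces $X(\gamma_0)$, identify the isotropy group $H_x$ for $x \in \bar X(\gamma_0)$ via its character group $K/Q(\gamma_0 \cap \ZZ^r)$, and use that $\pi$ is an \'etale $H$-bundle precisely where the action is free. Parts~(i) and~(iii) are indeed largely bookkeeping, and your outline for~(iv) is correct once~(ii) is in hand (smooth $\Rightarrow$ locally factorial $\Rightarrow$ free action by~(ii) $\Rightarrow$ $\pi$ is a principal bundle $\Rightarrow$ smoothness transfers in both directions). The one place where your sketch underestimates the work is~(ii): the assertion that the local class group of $\OOO_{X,\pi(x)}$ equals $K/Q(\gamma_0 \cap \ZZ^r)$ is itself a nontrivial theorem (this is essentially~[ADHL, Prop.~1.6.2.2 and Cor.~1.6.2.6]), requiring the description of Weil divisors on $X$ via $H$-invariant divisors on $\hat X$ and the computation of which invariant divisors become principal locally. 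You invoke this identification as if it were immediate, but it is precisely the content the paper is outsourcing to the reference. A self-contained write-up would need to supply that argument.
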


We refer to~\cite[Cor.~1.6.2.6, Cor.~3.3.1.8, Cor.~3.3.1.9]{ADHL}
for the above statements.
Next we describe the impact of quasismoothness
has an impact the position of the relation
degrees.

\begin{proposition}
\label{prop:qsmooth2degrees}
In the situation of
Construction~\ref{rem:Coxconst},
assume $\dim(R) = r -s$ 
and let~$X$ be quasismooth.
Then, for every $j = 1, \ldots, s$,
we have 
\[
\deg(g_j)
\ \in \ 
\bigcap_{\gamma_0 \in \rlv(X)} 
\left( 
Q(\gamma_0 \cap \ZZ^r) 
\cup 
\bigcup_{i=1}^r w_i + Q(\gamma_0 \cap \ZZ^r) 
\right).
\]
\end{proposition}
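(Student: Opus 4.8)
The plan is to exploit quasismoothness through the Jacobian criterion and to read off the relation degree from whichever monomial of $g_j$ survives at a suitable smooth point. Fix a relation index $j$ and an $X$-face $\gamma_0 \in \rlv(X)$, and write $\gamma_0 = \gamma_I$ for the corresponding index set $I \subseteq \{1, \ldots, r\}$, so that $Q(\gamma_0 \cap \ZZ^r)$ is precisely the monoid generated by the $w_i$ with $i \in I$. Since $\gamma_0$ is in particular an $\bar{X}$-face, the piece $\bar{X}(\gamma_0)$ is non-empty by Remark~\ref{rem:barXfaces}, so I can pick a point $x \in \bar{X}$ with $x_i \neq 0 \iff i \in I$.

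First I would invoke Proposition~\ref{prop:locprops}~(iii): as $X$ is quasismooth and $x \in \bar{X}(\gamma_0)$, the point $x$ is a smooth point of $\bar{X}$. The assumption $\dim(R) = r-s$ means that the ideal $\bangle{g_1,\ldots,g_s}$, which is prime since $R$ is a domain and hence equals $I(\bar{X})$, has height $s$; thus $\bar{X} \subseteq \KK^r$ is an irreducible complete intersection of codimension $s$. The Jacobian criterion then forces the Jacobian matrix $(\partial g_l/\partial T_k(x))_{l,k}$ to have rank exactly $s$ at the smooth point $x$. As this matrix has $s$ rows, its rows are linearly independent; in particular the $j$-th row does not vanish, so there is an index $k$ with $\partial g_j/\partial T_k(x) \neq 0$.

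The key step is to convert this non-vanishing into a statement about a single monomial of $g_j$ and its degree. If $\partial g_j/\partial T_k(x) \neq 0$, then some monomial $m = c \prod_i T_i^{a_i}$ of $g_j$ satisfies $\partial m/\partial T_k(x) \neq 0$. Because $x_i = 0$ for $i \notin I$, every variable occurring with positive exponent in $\partial m/\partial T_k$ must be indexed by $I$. I distinguish two cases. If $k \in I$, then $m$ is supported entirely on $I$, whence $\deg(g_j) = \deg(m) \in Q(\gamma_0 \cap \ZZ^r)$. If $k \notin I$, then the surviving exponent $a_k - 1$ of $T_k$ must be zero, so $a_k = 1$ and $m = c\, T_k \prod_{i \in I} T_i^{a_i}$, giving $\deg(g_j) = w_k + \sum_{i \in I} a_i w_i \in w_k + Q(\gamma_0 \cap \ZZ^r)$. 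In either case $\deg(g_j)$ lies in the displayed union for this $\gamma_0$.

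Since $\gamma_0 \in \rlv(X)$ was arbitrary, $\deg(g_j)$ lies in the intersection over all $X$-faces, which is the assertion. I expect the only genuine subtlety to be the Jacobian-criterion step: one must ensure that $g_1, \ldots, g_s$ actually generate $I(\bar{X})$, which holds because $R$ is a domain and the ideal has the expected height $s$, so that the Jacobian rank at a smooth point is genuinely the codimension $s$. Given this, the monomial bookkeeping in the third paragraph is routine, using only homogeneity of $g_j$ for the $K$-grading.
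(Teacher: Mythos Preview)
Your proof is correct and follows essentially the same route as the paper: pick a point $x \in \bar X(\gamma_0)$, use quasismoothness to force $\grad_{g_j}(x) \neq 0$, and then read off the constraint on $\deg(g_j)$ from a monomial with non-vanishing partial derivative. You simply make explicit what the paper takes for granted, namely that $\dim(R) = r-s$ forces the Jacobian at a smooth point to have full row rank~$s$, so that every individual gradient $\grad_{g_j}(x)$ is nonzero.
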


\begin{proof}
Consider any $X$-face $\gamma_I$, where 
$I \subseteq \{1,\ldots,r\}$,
and choose a point $x \in \bar X(\gamma_I)$.
Then $x_i \ne 0 $ holds if and only if 
$i \in I$.
For any monomial $T^{\nu}$, we have 
$$
\frac{\partial T^{\nu}}{\partial T_k} (x)
 \ne 
0
\ \Rightarrow \
\nu \ \in \ \gamma_I \cup \gamma_I + e_k
\ \Rightarrow \
\deg(T^{\nu}) 
 = 
Q(\nu) 
 \in  
Q(\gamma_I) \cup Q(\gamma_I) + w_k.
$$
Now, since $X$ is quasismooth, we have 
$\grad_{g_j}(x) \ne 0$ for all $j = 1,\ldots, s$.
Thus, every $g_j$ must have a monomial $T^{\nu_j}$ 
with non-vanishing gradient at $x$.
\end{proof}

Finally, in case of a complete intersection
Cox ring, we have an explicit description
of the anticanonical class;
see~\cite[Prop.~3.3.3.2]{ADHL}.

\begin{proposition} 
\label{prop:anticanclass}
In the situation of
Construction~\ref{rem:Coxconst},
assume that $\dim(R) = r -s$ holds.
Then the anticanonical class of $X$ is
given in $K = \Cl(X)$ as 
$$ 
-\KKK_X
\ = \ 
\deg(T_1) +  \ldots + \deg(T_r)
-
\deg(g_1) -  \ldots - \deg(g_s).
$$
\end{proposition}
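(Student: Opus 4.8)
The plan is to compute the canonical class of $X$ by passing to the characteristic space and applying adjunction on the total coordinate space $\bar X$. Write $w_i = \deg(T_i)$ and put $\hat X := \bar X^{ss}(\lambda)$, so that we have the good quotient $\pi \colon \hat X \to X$ by the quasitorus $H = \Spec\,\KK[K]$. Since $-\KKK_X$ and the right hand side of the assertion are both Weil divisor classes on the normal variety $X$, it suffices to verify the identity over a big open subset. I would fix one, namely the image of the smooth points of $\bar X$ having trivial $H$-isotropy; its complement has codimension at least two by normality of $\bar X$ together with almost freeness of the grading, and over it $\pi$ restricts to an $H$-torsor onto a smooth open set $X_0 \subseteq X$.

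First I would produce a distinguished generator of the canonical sheaf of $\bar X$. By the hypothesis $\dim(R) = r - s$, the embedding $\bar X = V(g_1,\ldots,g_s) \subseteq \KK^r$ realises $\bar X$ as a complete intersection, hence Gorenstein; as $R$ is an abstract Cox ring, $\bar X$ is in addition normal. On the smooth locus, the iterated Poincar\'e residue
\[
\sigma \ := \ \Res \frac{dT_1 \wedge \cdots \wedge dT_r}{g_1 \cdots g_s}
\]
generates $\omega_{\bar X}$. The residue is $H$-equivariant, the form $dT_1 \wedge \cdots \wedge dT_r$ carries weight $w_1 + \cdots + w_r$ because each $dT_i$ has weight $w_i$, and $g_1 \cdots g_s$ has weight $\deg(g_1) + \cdots + \deg(g_s)$. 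Hence $\sigma$ is $H$-semiinvariant of weight $w_0 := w_1 + \cdots + w_r - \deg(g_1) - \cdots - \deg(g_s)$.

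Next I would descend $\sigma$ to $X$. Over $X_0$ the map $\pi$ is an $H$-torsor; since $H$ is a quasitorus, the fundamental vector fields trivialise the relative tangent sheaf and the adjoint action on the Lie algebra is trivial, so the relative dualising sheaf $\omega_{\hat X/X}$ is equivariantly trivial and $\omega_{\hat X} \cong \pi^*\omega_X$ as $H$-sheaves. Under the dictionary of the Cox construction, in which weight-$w$ semiinvariants correspond to sections of the divisorial sheaf of class $w$, normalised so that $\deg(T_i) = w_i$ equals the class of $\{T_i = 0\}$, the weight-$w_0$ piece of $\pi_*\omega_{\hat X}$ is a divisorial sheaf of class $\KKK_X + w_0 \in K$. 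Our form $\sigma$ is a nowhere vanishing global section of this sheaf over $X_0$, which forces $\KKK_X + w_0 = 0$ in $\Cl(X) = K$, that is $-\KKK_X = w_0 = \deg(T_1) + \cdots + \deg(T_r) - \deg(g_1) - \cdots - \deg(g_s)$.

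The main obstacle I anticipate is the bookkeeping of $H$-weights combined with the descent: one must match the equivariant structure on $\omega_{\hat X}$ with the grading dictionary so that both the sign and the torsion part of $K$ come out correctly, and one must justify restricting to the big open subset $X_0$ so that all sheaves involved are invertible and the torsor argument applies. Both points are routine in the Cox ring formalism but are precisely where sign errors creep in. A clean alternative that avoids residues altogether would be to cover $X_0$ by the affine charts associated with the relevant $X$-faces of Construction~\ref{rem:Coxconst} and to compare invariant top-forms chart by chart, which reproduces the same weight $w_0$.
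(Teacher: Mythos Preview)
Your argument is correct: computing a generating form of $\omega_{\bar X}$ via the iterated Poincar\'e residue on the complete intersection $\bar X = V(g_1,\ldots,g_s) \subseteq \KK^r$, reading off its $H$-weight $w_0 = \sum w_i - \sum \deg(g_j)$, and then descending along the $H$-torsor over a big smooth open subset is exactly the standard route. The only point you flag yourself, namely that $X_0$ is big in $X$ (not merely that $\hat X_0$ is big in $\hat X$), is indeed the one place that needs a sentence; it follows because the characteristic space map $\pi \colon \hat X \to X$ is \'etale in codimension one, so the complement of the free smooth locus pushes down to a small set.

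The paper itself does not give a proof of this proposition at all; it simply refers to \cite[Prop.~3.3.3.2]{ADHL}. Your write-up is essentially the argument behind that reference, so there is nothing to compare: you have supplied the proof that the paper outsources.
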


\section{General hypersurface Cox rings}

First, we make our concept of a general hypersurface 
Cox ring precise.
Then we present the toolbox to be used 
in the proof of Theorem~\ref{thm:candidates}
for verifying that given specifying data,
that means a collection of the generator 
degrees and a relation degree,
allow indeed a smooth general hypersurface 
Cox ring.
We will have to deal with the following 
setting.

\begin{construction}
\label{constr:hypersurf}
Consider a linear, pointed, almost free
$K$-grading on the polynomial ring 
$S := \KK[T_1, \ldots, T_r]$ and the 
quasitorus action 
$H \times \bar Z \to \bar Z$,
where 
$$ 
H 
\ := \ 
\Spec \, \KK[K],
\qquad\qquad
\bar Z 
\ := \ 
\Spec \, S 
\ = \ 
\KK^r.
$$
As earlier, we write $Q \colon \ZZ^r \to K$,
$e_i \mapsto w_i := \deg(T_i)$ for the degree
map.
Assume that $\Mov(S) \subseteq K_\QQ$ is of 
full dimension and fix $\tau \in \Lambda(S)$ 
with $\tau^\circ \subseteq \Mov(S)^\circ$.
Set
$$ 
\hat {Z}
\ := \ 
\bar Z^{ss}(\tau),
\qquad \qquad 
Z
\ := \ 
\hat {Z} \quot H.
$$ 
Then $Z$ is a projective toric variety with 
divisor class group $\Cl(Z) = K$ 
and Cox ring $\mathcal{R}(Z) = S$.
Moreover, fix $0 \ne \mu \in K$, and 
for $g \in S_\mu$ set
$$
R_g  := S / \langle g \rangle,
\qquad
\bar X_g  :=  V(g)  \subseteq  \bar Z,
\qquad
\hat X_g  :=  \bar X_g \cap \hat Z,
\qquad
X_g  :=  \hat X_g \quot H  \subseteq  Z.
$$
Then the factor algebra $R_g$ inherits a 
$K$-grading from $S$ and the quotient  
$X_g \subseteq Z$ is a closed subvariety.
Moreover, we have
$$ 
X_g
\ \subseteq \
Z_g
\ \subseteq \ 
Z
$$
where $Z_g \subseteq Z$ is the 
minimal ambient toric variety
of $X_g$, that means the (unique) minimal 
open toric subvariety containing $X_g$. 
\end{construction}

\begin{remark}
\label{rem:embcox}
In the situation of Construction~\ref{constr:hypersurf},
there is a (unique) GIT-cone $\lambda \in \Lambda(R_g)$
such that we have  
$$
\hat X_g \ = \ \bar X_g^{ss}(\lambda),
\qquad\qquad
X_g \ = \ \bar X_g^{ss}(\lambda) \quot H.
$$
Thus, if $R_g$ is an abstract Cox ring 
and $T_1, \ldots, T_r$ define pairwise
non-associated $K$-primes in $R_g$, 
then $X_g$ is as in 
Construction~\ref{rem:Coxconst}. 
In particular
$$\Cl(X) \ = \ K,
\qquad\qquad
\RRR(X_g) \ = \ R_g
$$
hold for the  divisor class group 
and the Cox ring of $X_g$.
Moreover, in $K_\QQ$ we have the 
following
\[
\tau^\circ 
\ = \ 
\Ample(Z) 
\ \subseteq \ 
\Ample(Z_g) 
\ = \ 
\Ample(X_g)
\ = \ 
\lambda^\circ.
 \]
\end{remark}

We are ready to formulate the precise definitions for our
notions around hypersurface Cox rings.

\begin{definition}
Consider the situation of Construction~\ref{constr:hypersurf}.
\begin{enumerate}
\item
We call $R_g$ a \emph{hypersurface Cox ring}
if $T_1,\ldots, T_r$ define a minimal system
of $K$-homogeneous generators for $R_g$.
\item
We say that $R_g$ is \emph{spread} if
every monomial $T^\nu \in \KK[T_1,\ldots, T_r]$
of degree $\mu = \deg(g) \in K$ is a convex
combination of monomials of $g$.
\end{enumerate}
\end{definition}

Here, we tacitly identify a monomial
$T^\nu = T_1^{\nu_1} \cdots T_r^{\nu_r}$
with its exponent vector
$\nu = (\nu_1, \ldots, \nu_r) \in \QQ^r$
when we speak about convex combinations
of monomials.

\begin{remark}
In the setting of Construction~\ref{constr:hypersurf},
assume that $R_g$ is a hypersurface Cox ring.  
\begin{enumerate}
\item
Since $T_1,\ldots, T_r$ define a minimal system
of $K$-homogeneous generators, $R_g$ is not a
polynomial ring.
\item
As the $K$-grading is pointed, the $T_i$ 
define pairwise non-associated $K$-prime
elements in $R_g$.
\item
$R_g$ is spread if and only if the Newton polytope
of $g$ equals the convex hull over all
monomials of degree $\mu = \deg(g) \in K$.
\end{enumerate}
\end{remark}

\begin{definition}
\label{def:genhypcr}
Consider the situation of
Construction~\ref{constr:hypersurf}
and denote by
$S_\mu \subseteq S = \KK[T_1,\ldots, T_r]$
the homogeneous component
of degree $\mu \in K$.
\begin{enumerate}
\item
A \emph{general hypersurface Cox ring}
is a family $R_g$, where $g \in U$
with a non-empty open $U \subseteq S_\mu$,
such that each $R_g$ is a hypersurface Cox ring.
\item
We say that a general hypersurface Cox ring  
$R_g$ is \emph{spread} if each $R_g$, where 
$g \in U$, is spread.
\item
We say that a general hypersurface Cox ring  
$R_g$ is \emph{smooth (Fano)} if
for some $\tau \in \Lambda(S)$ all
the resulting $X_g$, where $g \in U$,
are smooth (Fano).
\end{enumerate}
\end{definition}

We turn to the toolbox for verifying 
that given specifying data $w_1, \ldots, w_r \in K$ 
and $\mu \in K$ as in Construction~\ref{constr:hypersurf}
lead to a smooth Fano general hypersurface Cox ring~$R_g$ 
in the above sense.

\begin{remark}
\label{rem:genCRFano}
In the notation of Construction~\ref{constr:hypersurf},
a general hypersurface Cox ring~$R_g$ is Fano
if and only if the generator and relation
degrees satisfy
$$ 
- \mathcal{K}
\ = \
w_1 + \ldots + w_r - \mu
\ \in \ 
\Mov(R_g)^\circ.
$$  
In this case, the unique cone $\tau \in \Lambda(S)$
with $-\mathcal{K} \in \tau^\circ$ defines Fano 
varieties $X_g$ for all $g \in U$; 
see Proposition~\ref{prop:anticanclass} 
and Remark~\ref{rem:embcox}.
\end{remark}

In the notation of Construction~\ref{constr:hypersurf},
we denote by $U_\mu \subseteq S_\mu$ the non-empty
open set of polynomials $f \in S$ of degree
$\mu \in K$ such that each monomial of $S_\mu$
is a convex combination of monomials of $f$.

\begin{remark}
If $R_g$, where $g \in U$, is a general hypersurface
Cox ring, then $R_g$, where $g \in U \cap U_\mu$
is spread general hypersurface Cox ring.
In particular, we can always assume a general
hypersurface Cox ring to be spread.
\end{remark}

\begin{remark}
\label{rem:genCRirredundant}
In the situation of Construction~\ref{constr:hypersurf},
consider the rings $R_g$ for $g \in U_\mu$.
Then the following statements are equivalent.
\begin{enumerate}
\item
The variables $T_1, \ldots, T_r$ form a minimal 
system of generators for all $R_g$, where $g \in U_\mu$.
\item
The variables $T_1, \ldots, T_r$ form a minimal 
system of generators for one $R_g$ with $g \in U_\mu$.
\item
We have $\mu \ne w_i$ for $i = 1, \ldots, r$.
\item
The polynomial $g \in U_\mu$ is not of the form
$g = T_i + h$ with $h \in S_\mu$ not depending on $T_i$.
\end{enumerate}
\end{remark}

\begin{lemma} 
\label{lem:genirred}
Consider a linear, pointed $K$-grading
on $S := \KK[T_1, \ldots, T_r]$. 
Then, for any $0 \neq \mu \in K$ the irreducible
polynomials $g \in S_\mu$ form an open subset of $S_\mu$.
\end{lemma}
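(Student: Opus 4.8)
The plan is to reduce the statement to a \emph{positive} $\ZZ$-grading, where the classical fact that reducibility is a closed condition applies, and then to intersect with the subspace $S_\mu$. First I would dispose of the trivial case $S_\mu = 0$, in which the set of irreducible polynomials is empty and there is nothing to prove. So assume $S_\mu \neq 0$, whence $\mu \in \Eff(S)$. Since $S$ is an integral domain whose homogeneous units are the nonzero constants, all lying in degree zero, Lemma~\ref{lem:coarshom}(ii) provides a coarsening $\kappa \colon K \to \ZZ$ whose associated $\ZZ$-grading is pointed with effective cone $\QQ_{\ge 0}$. Writing $S = \bigoplus_{n \ge 0} S^{(\kappa)}_n$ for this grading, each $T_i$ has $\kappa$-degree $\kappa(w_i) \ge 1$, so every piece $S^{(\kappa)}_n$ is finite-dimensional, and $S_\mu$ is a linear subspace of $S^{(\kappa)}_m$, where $m := \kappa(\mu) \ge 1$ (note $\kappa(\mu) = 0$ would force $S_\mu \subseteq S^{(\kappa)}_0 = \KK$, contradicting $\mu \neq 0$). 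As irreducibility of an element of $S_\mu$ is the same notion whether tested in $S_\mu$ or in $S^{(\kappa)}_m$, it suffices to prove that the irreducible elements of $S^{(\kappa)}_m$ form an open subset; intersecting with the subspace $S_\mu$ then yields the claim.

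Next I would record that, for this positive $\ZZ$-grading, every factor of a homogeneous element is again homogeneous. Indeed, comparing the top and bottom nonzero $\kappa$-homogeneous components of a product and using that $S$ is integral, a factorization $g = ab$ of a $\kappa$-homogeneous $g$ forces the gap between the highest and lowest $\kappa$-degree occurring in $a$, and likewise in $b$, to vanish; hence $a$ and $b$ are homogeneous. Consequently, a nonzero $g \in S^{(\kappa)}_m$ is reducible precisely when it lies in the image of one of the multiplication maps $S^{(\kappa)}_{m_1} \times S^{(\kappa)}_{m_2} \to S^{(\kappa)}_m$ with $m_1 + m_2 = m$ and $m_1, m_2 \ge 1$, of which there are only finitely many.

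It remains to see that each such image is Zariski closed in $S^{(\kappa)}_m$. Here I would projectivize: since $S$ is a domain, the product of two nonzero homogeneous polynomials is nonzero, so $([a],[b]) \mapsto [ab]$ defines a morphism $\PP(S^{(\kappa)}_{m_1}) \times \PP(S^{(\kappa)}_{m_2}) \to \PP(S^{(\kappa)}_m)$. The source is complete, so this morphism is proper and its image is closed; the affine cone over that image is exactly the image of the corresponding multiplication map, hence closed. The reducible locus of $S^{(\kappa)}_m$ is therefore the finite union of $\{0\}$ with these cones, a closed set, and its complement, the irreducible elements of $S^{(\kappa)}_m$, is open. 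Intersecting with $S_\mu$ completes the argument.

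The main obstacle is conceptual rather than computational. Over a group $K$ with torsion, the factors of a $K$-homogeneous polynomial need not be $K$-homogeneous: for example $T_1^2 - T_2^2 = (T_1 - T_2)(T_1 + T_2)$ has inhomogeneous factors when $\deg(T_1)$ and $\deg(T_2)$ differ by a $2$-torsion class, yet the product is $K$-homogeneous. Thus one cannot run the multiplication-map argument directly with the $K$-grading. Passing to the torsion-free coarsening $\kappa$ is precisely what restores the homogeneity of factors and lets the standard projective-elimination reasoning apply.
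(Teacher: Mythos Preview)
Your proof is correct and follows essentially the same approach as the paper: coarsen via Lemma~\ref{lem:coarshom}(ii) to a pointed $\ZZ$-grading, observe that factors of $\ZZ$-homogeneous elements are again $\ZZ$-homogeneous, and then argue that the reducible locus is a finite union of affine cones over images of projectivized multiplication maps, hence closed. Your additional remarks (the explicit justification that $\kappa(\mu)\ge 1$, the homogeneity-of-factors argument, and the closing comment on torsion) make explicit details the paper leaves implicit, but the strategy is the same.
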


\begin{proof}
Lemma~\ref{lem:coarshom}~(ii) provides us with 
a coarsening homomorphism $\kappa \colon K \to \ZZ$ 
that turns $S$ into a pointed $\ZZ$-graded algebra.
Then $S_\mu$ is a vector subspace of the 
(finite dimensional) vector space $S_{\kappa(\mu)}$ 
of $\kappa(\mu)$-homogeneous polynomials
and we may assume $K = \ZZ$ for the proof.
Since the $K$-grading of $S$ is pointed, we 
have $S^* = S_0 \setminus \{0\}$.
Thus, a polynomial $g \in S_\mu$ is reducible 
if and only if it is a product of homogeneous 
polynomials of non-zero $K$-degree. 

Now, let $u, v \in \ZZ$ with $u + v = \mu$ and 
$S_u \ne \{0\} \ne S_v$.
Then the set of $\mu$-homogeneous polynomials $g$ 
admitting a factorization 
$g = fh$ with $f \in S_u$, $h \in S_u$
is exactly the affine cone over the image
of the projectivized multiplication map
$$
\PP(S_u) \times \PP(S_v) \ \to \ \PP(S_\mu), 
\qquad \qquad
([f],[h]) \ \mapsto \ [fh]
$$
and thus is a closed subset of $S_\mu$.
As there are only finitely many such
presentations $u + v = \mu$,
the reducible $g \in S_\mu$ 
form a closed subset of $S_\mu$.
\end{proof}

\begin{proposition} 
\label{prop:varclassprime}
Consider the setting of Construction~\ref{constr:hypersurf}.
For $1 \le i \le r$ denote by $U_i \subseteq S_\mu$ 
the set of all $g \in S_\mu$ such that $g$ is prime 
in $S$ and $T_i$ is prime in~$R_g$.
Then $U_i \subseteq S_\mu$ is open.
Moreover, $U_i$ is non-empty if and only if 
there is a $\mu$-homogeneous prime polynomial 
not depending on $T_i$.
\end{proposition}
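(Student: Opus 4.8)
The plan is to reduce the whole statement to one algebraic criterion describing when $T_i$ becomes prime in $R_g$, and then to read off both the openness and the non-emptiness characterisation from it. Throughout I use that $S$, being a polynomial ring over a field, is a unique factorisation domain, so that ``prime'' and ``irreducible'' agree in $S$; the same applies to $\KK[T_j;\ j\ne i]$, whose induced grading $\deg(T_j) = w_j$ is again linear and pointed, its effective cone being a subcone of the pointed cone $\Eff(S)$. In particular Lemma~\ref{lem:genirred} applies to both rings.

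First I would establish the criterion. Fix a $g \in S_\mu$ that is prime in $S$ and write $\bar g := g|_{T_i = 0} \in \KK[T_j;\ j \ne i]_\mu$ for the image of $g$ under the restriction map. The isomorphism
$$
R_g/\langle T_i\rangle
\ = \
S/\langle g, T_i\rangle
\ \cong \
\KK[T_j;\ j \ne i]/\langle \bar g\rangle
$$
shows that $R_g/\langle T_i\rangle$ is a domain exactly when $\bar g$ is zero or a prime element. If $\bar g$ is prime, then $T_i$ is a nonzero non-unit of $R_g$ and hence prime; if $\bar g = 0$, then $T_i \mid g$ in $S$ forces $g = cT_i$ with $c \in \KK^*$ (as $g$ is prime), so that $T_i = 0$ in $R_g$ and $T_i$ is not prime. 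Thus, for $g$ prime in $S$, primality of $T_i$ in $R_g$ is equivalent to $\bar g$ being a prime element of $\KK[T_j;\ j \ne i]$.

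For the openness of $U_i$ I would now write $U_i = A \cap B$, where $A := \{g \in S_\mu;\ g \text{ prime in } S\}$ and $B := \{g \in S_\mu;\ \bar g \text{ prime in } \KK[T_j;\ j\ne i]\}$; the criterion gives precisely this equality. The set $A$ is open by Lemma~\ref{lem:genirred}. For $B$, the restriction $\rho_i \colon S_\mu \to \KK[T_j;\ j\ne i]_\mu$, $g \mapsto g|_{T_i=0}$, is $\KK$-linear and hence continuous, and $B = \rho_i^{-1}(P)$ is the preimage of the set $P$ of prime polynomials in $\KK[T_j;\ j\ne i]_\mu$, which is open again by Lemma~\ref{lem:genirred}. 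Therefore $U_i = A \cap B$ is open.

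Finally, the non-emptiness follows from the same criterion. If $g \in U_i$, then $\bar g$ is a $\mu$-homogeneous prime of $\KK[T_j;\ j\ne i]$, and any factorisation in $S$ of an element free of $T_i$ uses only the variables $T_j$ with $j \ne i$, so $\bar g$ is also prime in $S$; thus $\bar g$ is a $\mu$-homogeneous prime not depending on $T_i$. Conversely, given such a prime $p$, I would take $g = p$: then $R_p = (\KK[T_j;\ j\ne i]/\langle p\rangle)[T_i]$ is a polynomial ring in $T_i$ over a domain, so $T_i$ is prime in $R_p$, while $p$ is prime in $S$, whence $p \in U_i$. The only delicate point is the degenerate case $\bar g = 0$ in the criterion, which is exactly where the hypothesis that $g$ be prime in $S$ enters; the remaining steps are a routine combination of the UFD property of $S$, Lemma~\ref{lem:genirred}, and the continuity of the linear map $\rho_i$.
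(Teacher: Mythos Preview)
Your proof is correct and follows essentially the same approach as the paper: both reduce the question to the criterion that, for $g$ prime in $S$, the variable $T_i$ is prime in $R_g$ if and only if $g|_{T_i=0}$ is prime in $\KK[T_j;\ j\ne i]$, and both then invoke Lemma~\ref{lem:genirred} twice to obtain openness. You simply spell out more of what the paper leaves implicit---the edge case $\bar g = 0$, the applicability of Lemma~\ref{lem:genirred} to the subring, and the non-emptiness supplement, which the paper dismisses as ``clear''.
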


\begin{proof}
By Lemma~\ref{lem:genirred}, the $g \in S_\mu$ being 
prime in $S$ form an open subset $U \subseteq S_\mu$.
For any $g \in U$, the variable $T_i$ defines a prime
in $R_g$ if and only if the polynomial
$g_i := g(T_1, \ldots, T_{i-1}, 0, T_{i+1}, \ldots, T_n)$
is prime in $\KK[T_j; \ j \ne i]$.
Thus, using again Lemma~\ref{lem:genirred}, we 
see that the $g \in U$ with $T_i \in R_g$ prime 
form the desired open subset $U_i \subseteq U$. 
The supplement is clear.
\end{proof}

Checking the normality and $K$-factoriality
of $R_g$ amounts, in our situation, to 
proving factoriality.
We will use Dolgachev's criterion,
see~\cite[Thm.~1.2]{Do80} and~\cite{Do81},
which tells us that a polynomial 
$g = \sum a_\nu T^\nu$ in $\KK[T_1, \ldots, T_r]$
defines a unique factorization domain 
if the Newton polytope $\Delta \subseteq \QQ^r$ 
of $g$ satisfies the following conditions:
\begin{enumerate}
\item 
$\dim(\Delta) \geq 4$,
\item 
each coordinate hyperplane of $\QQ^r$ 
intersects $\Delta$ non-trivially,
\item 
the dual cone of $\cone(\Delta - u; \ u \in \Delta_0)$ 
is regular for each one-dimensional face 
$\Delta_0 \preceq \Delta$,
\item
for each face $\Delta_0 \preceq \Delta$ the zero locus of
$\sum_{\nu \in \Delta_0} a_\nu T^\nu$ is smooth
along the torus $\mathbb{T}^r = (\mathbb{K}^*)^r$.
\end{enumerate}
We will call for short a convex polytope 
$\Delta \subseteq \QQ_{\geq 0}^r$ with
properties (i)--(iii) from above a \emph{Dolgachev polytope}.

\begin{proposition} 
\label{prop:ufdcrit}
In the situation of Construction~\ref{constr:hypersurf},
suppose that one of the following conditions 
is fulfilled:
\begin{enumerate}
\item 
$K$ is of rank at most $r-4$ and torsion free,
there is a $g \in S_\mu$ such that
$T_1, \ldots, T_r$ define primes
in $R_g$, we have $\mu \in \tau^\circ$ 
and $\mu$ is base point free on~$Z$.
\item
The set 
$\conv(\nu \in \ZZ_{\geq 0}^r; \ Q(\nu) = \mu)$
is a Dolgachev polytope.
\end{enumerate}
Then there is a non-empty open subset of polynomials 
$g \in S_\mu$ such that the ring $R_g$ is factorial.
\end{proposition}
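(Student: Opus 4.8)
The plan is to treat the two sufficient conditions separately, since each reduces the factoriality of a general $R_g$ to a concrete geometric or combinatorial criterion. In both cases the underlying mechanism is the same: factoriality is an open (or at least generically satisfied) condition on $g \in S_\mu$, so it suffices to exhibit \emph{one} polynomial in $S_\mu$ with the desired property, or to check that the relevant hypotheses hold for the whole family at once. I would first record that, by Lemma~\ref{lem:genirred}, the irreducible polynomials in $S_\mu$ already form an open subset, so throughout I may work on a fixed open locus where $g$ is irreducible and hence $R_g$ is integral.

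For condition~(ii), the strategy is essentially to invoke Dolgachev's criterion directly. The Newton polytope of a generic $g \in S_\mu$ is exactly $\Delta := \conv(\nu \in \ZZ_{\geq 0}^r; \ Q(\nu) = \mu)$, the convex hull of all exponent vectors of monomials of degree $\mu$; indeed for $g$ in the open set $U_\mu$ every such monomial appears with non-zero coefficient. By hypothesis $\Delta$ is a Dolgachev polytope, so properties~(i)--(iii) of the criterion hold automatically and depend only on $\mu$, not on the coefficients of $g$. The only remaining condition is~(iv), smoothness along the torus $\TT^r$ of the truncations $\sum_{\nu \in \Delta_0} a_\nu T^\nu$ over all faces $\Delta_0 \preceq \Delta$; this is a Bertini-type transversality statement, and the plan is to argue that it fails only on a proper closed subset of $S_\mu$. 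Intersecting this open locus with $U_\mu$ and with the irreducibility locus yields a non-empty open set on which all four conditions hold, so $R_g$ is a unique factorization domain, i.e. factorial.

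For condition~(i), the approach is more geometric and leans on the Cox-ring picture from Construction~\ref{rem:Coxconst} together with the Grothendieck--Lefschetz type results for factoriality of ample divisors. Here the rank-and-torsion-freeness bound $\rk(K) \le r-4$, the primeness of all $T_i$ in $R_g$, and the base-point-freeness and ampleness of $\mu$ on $Z$ combine to present $X_g \subseteq Z$ as a sufficiently positive, sufficiently high-dimensional hypersurface in the toric variety $Z$. The plan is to use that for such a general ample hypersurface the restriction map on divisor class groups $\Cl(Z) \to \Cl(X_g)$ is an isomorphism (a Lefschetz-type statement, valid because the codimension bound forces $\dim X_g \ge 3$ and the positivity of $\mu$ controls the relevant cohomology), so that $X_g$ inherits the torsion-free, appropriately generated class group needed for $R_g$ to be factorial. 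Primeness of the $T_i$ guarantees that the hypersurface-Cox-ring hypotheses are met, and an open-density argument over $g \in S_\mu$ again produces the desired non-empty open locus.

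The main obstacle I expect is condition~(iv) in the Dolgachev criterion under~(ii): verifying that smoothness along $\TT^r$ of every face-truncation holds generically is a genuine transversality argument rather than a formal consequence of the combinatorics of $\Delta$, and one must be careful that the faces $\Delta_0$ can be low-dimensional, where the truncated polynomial has few monomials and transversality is most delicate. Correspondingly, for condition~(i) the hard part is establishing the Lefschetz-type isomorphism $\Cl(Z) \cong \Cl(X_g)$ with the correct hypotheses on the index bound and positivity of $\mu$; here the quotient presentation $\hat X_g = \bar X_g^{ss}(\lambda)$ from Remark~\ref{rem:embcox} should let me transfer the factoriality question from $X_g$ upstairs to $\bar X_g$, but the codimension and ampleness bookkeeping is where the real work lies. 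In both branches, once the single governing condition is secured, openness (via Lemma~\ref{lem:genirred} and Proposition~\ref{prop:varclassprime}) closes the argument routinely.
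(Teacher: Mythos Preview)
Your plan matches the paper's proof: for~(i) the Lefschetz-type statement you anticipate is precisely~\cite[Cor.~2.3]{ArLa}, which under the hypotheses of ampleness, base-point-freeness and the dimension bound shows that $R_g$ is the Cox ring of $X_g$ (hence $K$-factorial, hence factorial since $K$ is torsion free), and for~(ii) the generic validity of Dolgachev's condition~(iv) on all face-truncations is supplied by Khovanskii's non-degeneracy theorem~\cite[\S2, Thm.~2]{Ho}. One minor omission in~(i): the paper first disposes of the degenerate case $\mu=w_i$ for some $i$, where a general $g\in S_\mu$ has the form $T_i+h$ and $R_g$ is a polynomial ring, hence trivially factorial; only for $\mu\neq w_i$ does the Lefschetz argument run, since it requires $T_1,\ldots,T_r$ to form a minimal generating system so that $\bar X_g\setminus\hat X_g$ has codimension at least two.
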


\begin{proof}
Assume that~(i) is satisfied.
If $\mu = \deg(T_i)$ holds for some~$i$,
then, as the grading is pointed,
we have a non-empty open set of polynomials
$g = T_i + h$ in $S_\mu$ with $h$
not depending
on $T_i$.
The corresponding $R_g$ are all factorial.
Now assume $\mu \ne \deg(T_i)$
for all $i$.
By Proposition~\ref{prop:varclassprime}, 
the set $U \subseteq S_\mu$ of all prime 
$g \in S_\mu$ such that $T_1, \ldots, T_r$ 
define primes in $R_g$ is open and, 
by assumption, $U \subseteq S_\mu$ 
is non-empty.
Remark~\ref{rem:genCRirredundant}
yields that $T_1, \ldots, T_r$ form a 
minimal system of generators for~$R_g$.
We conclude that for all $f \in U$,
the complement of $\hat X_g$ in $\bar X_g$
is of codimension at least two. 
Since $\mu$ is base point free and ample on $Z$,
we can apply~\cite[Cor.~2.3]{ArLa}, 
telling us that after suitably shrinking,
$U$ is still non-empty and~$R_g$ is the 
Cox ring of $X_g$ for all $g \in U$.
In particular, $R_g$ is $K$-factorial. 
Since~$K$ is torsion free,
$R_g$ is a unique factorization domain.

Assume that~(ii) holds.
As $\Delta := \conv(\nu \in \ZZ_{\geq 0}^r; \ Q(\nu) = \mu)$
is a Dolgachev polytope,
we infer from~\cite[§2, Thm.~2]{Ho} that 
there is a non-empty open subset 
of polynomials $g \in S_\mu$ with 
Newton polytope $\Delta$ satisfying 
the above conditions~(i) to~(iv).
Thus, Dolgachev's criterion
shows that $R_g$ is a factorial ring.
\end{proof}

\begin{proposition} 
\label{prop:Xgsmooth}
In the setting of Construction~\ref{constr:hypersurf},
assume that $Z_g$ and $\hat X_g$ both are smooth.
Then $X_g$ is smooth.
\end{proposition}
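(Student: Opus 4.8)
The plan is to realize the restricted quotient $\hat X_g \to X_g$ as a principal $H$-bundle and then to descend regularity from the smooth source $\hat X_g$ to its base $X_g$. Write $\pi\colon \hat Z \to Z$ for the characteristic quotient from Construction~\ref{constr:hypersurf} and put $\hat Z_g := \pi^{-1}(Z_g)$, so that $\pi$ restricts to a good quotient $\hat Z_g \to Z_g$. Since $X_g \subseteq Z_g$, the closed $H$-invariant subvariety $\hat X_g = \bar X_g \cap \hat Z$ is contained in $\hat Z_g$; being closed and invariant it is saturated with respect to the good quotient, whence $\hat X_g = \pi^{-1}(X_g)$ and, on the level of schemes, $\hat X_g = X_g \times_{Z_g} \hat Z_g$, the latter because $\hat X_g$ is cut out in $\hat Z_g$ by the pullback of the local equation of $X_g \subseteq Z_g$.

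The key step is to exploit the smoothness of the toric variety $Z_g$. For the characteristic quotient of a toric variety, smoothness of $Z_g$ is equivalent to all cones meeting $\hat Z_g$ being regular, and this in turn is equivalent to the $H$-action on $\hat Z_g$ being free; see~\cite{ADHL}. Hence $\pi\colon \hat Z_g \to Z_g$ is a principal $H$-bundle, in particular faithfully flat. Pulling this bundle back along the closed embedding $X_g \hookrightarrow Z_g$ and invoking the identification $\hat X_g = X_g \times_{Z_g} \hat Z_g$ from the first step, I obtain that $\pi\colon \hat X_g \to X_g$ is again a principal $H$-bundle, and in particular a faithfully flat morphism.

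It remains to descend smoothness. Since $\KK$ is algebraically closed of characteristic zero, a variety over $\KK$ is smooth precisely when it is regular, and regularity descends along a faithfully flat morphism with regular total space: for every point of $X_g$ and a point of $\hat X_g$ above it, the induced local homomorphism of local rings is flat with regular source, so the base ring is regular as well. As $\hat X_g$ is smooth by hypothesis, it follows that $X_g$ is regular, hence smooth. I expect the only genuinely substantive point to be the key step, namely that smoothness of the ambient toric variety $Z_g$ forces the $H$-action on $\hat Z_g$ to be free, so that the quotient is a bona fide principal bundle; once this freeness is secured, the passage to $\hat X_g$ and the descent of regularity are formal.
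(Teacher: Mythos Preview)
Your proposal is correct and follows essentially the same line as the paper's own proof: smoothness of $Z_g$ forces the $H$-action on $\pi^{-1}(Z_g)$ to be free, whence $\hat X_g \to X_g$ is a free quotient and smoothness descends. The paper compresses this into three sentences, while you spell out the principal-bundle and faithfully-flat-descent machinery explicitly; the substance is the same.
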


\begin{proof}
Consider the quotient map $p \colon \hat{Z} \to Z$.
Since $Z_g$ is smooth, $H$ acts freely on 
$p^{-1}(Z_g)$.
Thus, $X_g$ inherits smoothness from 
$\hat X_g = p^{-1}(X_g)$.
\end{proof}

\begin{lemma}
\label{lem:gensmooth}
Consider a linear, pointed $K$-grading on 
$S := \KK[T_1, \ldots, T_r]$.
Let $\lambda \in \Lambda(S)$ and set
$W := (\KK^r)^{ss}(\lambda)$.
Then, for any $\mu \in K$, the polynomials 
$g \in S_\mu$ such that $\grad(g)$ has no zeroes
in $W$
form an open subset of $S_\mu$.
\end{lemma}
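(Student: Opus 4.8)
The plan is to show that the complementary \emph{bad} set
\[
B \ := \ \{ g \in S_\mu; \ \grad(g)(x) = 0 \text{ for some } x \in W \}
\]
is closed in the finite-dimensional vector space $V := S_\mu$; its complement is then the asserted open set. (If $W = \emptyset$ the condition holds for every $g$, so the set is all of $S_\mu$ and we are done; assume henceforth $W \ne \emptyset$.) First I would form the incidence set
\[
\Sigma \ := \ \bigl\{ (g,x) \in V \times W; \ \tfrac{\partial g}{\partial T_1}(x) = \ldots = \tfrac{\partial g}{\partial T_r}(x) = 0 \bigr\}.
\]
Each partial derivative depends polynomially on $x$ and linearly on the coefficients of $g$, so $\Sigma$ is the intersection of $V \times W$ with a Zariski closed subset of $V \times \bar Z$; in particular $\Sigma$ is closed in $V \times W$, and $B = \pr_V(\Sigma)$ is its image under the projection to $V$. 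The obstacle is that $W = \bar Z^{ss}(\lambda)$ is only quasi-affine, so this projection need not be a closed map; the whole point will be to replace $W$ by a complete quotient.

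To this end I would let $H$ act on $V \times W$ trivially on the first factor and in the given way on $W$. Since $g$ has degree $\mu$, the derivative $\partial g/\partial T_k$ is homogeneous of degree $\mu - w_k$, and a homogeneous function vanishes at $x$ if and only if it vanishes along the whole orbit $H \cdot x$. Hence the vanishing locus of $\grad(g)$ in $W$ is $H$-invariant, and $\Sigma$ is an $H$-invariant closed subset of $V \times W$. Because the grading is pointed we have $S_0 = \KK$, so the good quotient $q \colon W \to Y := W \quot H$ has projective, in particular complete, target $Y$ (it is projective over $\Spec S_0 = \Spec \KK$). The morphism $\id_V \times q \colon V \times W \to V \times Y$ is again a good quotient for the $H$-action, and good quotients carry closed invariant subsets onto closed subsets; thus $\bar\Sigma := (\id_V \times q)(\Sigma)$ is closed in $V \times Y$.

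Finally, completeness of $Y$ makes the projection $\pr_V \colon V \times Y \to V$ a closed map, so $\pr_V(\bar\Sigma)$ is closed in $V$. Since $q$ affects only the second coordinate, $\pr_V(\bar\Sigma) = \pr_V(\Sigma) = B$, which proves that $B$ is closed and hence that the polynomials $g \in S_\mu$ with $\grad(g)$ nowhere zero on $W$ form an open subset. The only step beyond the routine closedness of polynomial conditions is the replacement of the non-proper $W$ by the projective quotient $Y$: this uses the $H$-invariance of $\Sigma$ together with the completeness of $Y$ guaranteed by pointedness of the grading, and it is here that I expect the essential work to lie.
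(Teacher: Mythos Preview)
Your proof is correct and follows essentially the same route as the paper's: form the incidence locus $\Sigma = \varphi^{-1}(0)$ in $S_\mu \times W$, observe it is $H$-invariant, pass to the good quotient $S_\mu \times (W \quot H)$ where the image of $\Sigma$ is closed, and then use projectivity of $W \quot H$ (from pointedness of the grading) to conclude that the projection to $S_\mu$ is closed. You supply a little more detail than the paper does---in particular the explicit reason why $\Sigma$ is $H$-invariant via the homogeneity of the partial derivatives---but the argument is the same.
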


\begin{proof}
Consider the morphism 
$\varphi \colon S_\mu \times W \to \KK^r$
sending $(g,z)$ to $\grad_z(g)$
and the projection 
$\pr_1 \colon S_\mu \times W \to S_\mu$
onto the first factor.
Then our task is to show that 
$S_\mu \setminus \pr_1(\varphi^{-1}(0))$ 
is open in $S_\mu$.
We make use of the action of 
$H = \Spec \, \KK[K]$ on~$W$
given by the $K$-grading 
and the commutative diagram
$$ 
\xymatrix{
S_\mu \times W
\ar[rr]
\ar[dr]_{\pr_1}
&
&
S_\mu \times W \quot H
\ar[dl]^{\pr_1}
\\
&
S_\mu
&
}
$$ 
where the horizontal arrow is the good 
quotient for $H$, acting trivially on 
$S_\mu$ and on $W$ as indicated above. 
Since $\varphi^{-1}(0) \subseteq S_\mu \times W$
is invariant under the $H$-action, the image of 
$\varphi^{-1}(0)$ in $S_\mu \times W \quot H$
is closed.
Since $W \quot H$ is projective, the image 
$\pr_1(\varphi^{-1}(0))$ is closed in~$S_\mu$.
\end{proof}

\begin{proposition} 
\label{prop:Xgqsmooth}
Consider the situation of Construction~\ref{constr:hypersurf}.
Then the polynomials $g \in S_\mu$ 
such that $g \in S$ is prime and $\hat X_g$ 
is smooth form an open subset $U \subseteq S_\mu$.
Moreover, $U$ is non-empty if and only if
there are $g_1, g_2 \in S_\mu$ such that $g_1 \in S$ 
is prime and
$\grad(g_2)$ has no zeroes in $\hat{Z}$.
\end{proposition}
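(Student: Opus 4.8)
The plan is to prove two separate claims: openness of the set $U$, and the stated non-emptiness criterion.

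\medskip

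\noindent
\emph{Openness of $U$.}
First I would decompose $U$ into the intersection of two sets and show each is open. By Lemma~\ref{lem:genirred}, the polynomials $g \in S_\mu$ that are prime in $S$ form an open subset, call it $U' \subseteq S_\mu$. For the smoothness condition, recall from Remark~\ref{rem:embcox} that there is a GIT-cone $\lambda \in \Lambda(R_g)$ with $\hat X_g = \bar X_g^{ss}(\lambda)$, and more importantly that $\hat Z = \bar Z^{ss}(\tau)$ is fixed (independent of $g$). The key point is that $\hat X_g = V(g) \cap \hat Z$, and since $\hat Z$ is a fixed open set of semistable points, smoothness of the hypersurface $\hat X_g$ inside the \emph{smooth} toric variety $\hat Z$ is governed by the nonvanishing of $\grad(g)$ together with $g$ itself along the relevant locus. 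The idea is that $\hat X_g$ is smooth precisely when $\grad(g)$ has no zero on $V(g) \cap \hat Z$; applying Lemma~\ref{lem:gensmooth} with the cone $\tau$ (so that $W = \hat Z = (\KK^r)^{ss}(\tau)$) gives that the polynomials with $\grad(g)$ nonvanishing on $\hat Z$ form an open set. Intersecting with $U'$ yields that $U$ is open.

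\medskip

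\noindent
\emph{The non-emptiness criterion.}
The forward direction is immediate: if $U \ne \emptyset$, pick $g \in U$; then $g_1 := g$ is prime in $S$, and since $\hat X_g$ is smooth, $\grad(g)$ has no zero on $V(g) \cap \hat Z$, so $g_2 := g$ works once we note the condition only asks for nonvanishing of $\grad(g_2)$ on all of $\hat Z$—here one must be slightly careful, since smoothness of $\hat X_g$ a priori only forces $\grad(g)$ nonvanishing \emph{on} $\hat X_g$, not on all of $\hat Z$. This is where the two \emph{separate} witnesses $g_1, g_2$ in the statement become essential, and it signals that the reverse direction is the real content.

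\medskip

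\noindent
\emph{The reverse direction (the main obstacle).}
Assume $g_1 \in S_\mu$ is prime and $g_2 \in S_\mu$ has $\grad(g_2)$ nonvanishing on $\hat Z$. The difficulty is that no single polynomial is simultaneously known to be prime \emph{and} have everywhere-nonvanishing gradient. The natural strategy is to show that each of the two defining conditions on $g$ is open (already established: primality by Lemma~\ref{lem:genirred}, gradient-nonvanishing by Lemma~\ref{lem:gensmooth}) \emph{and} that each is dense whenever nonempty, so that two nonempty open sets in the irreducible space $S_\mu$ must intersect. The witness $g_1$ shows the primality locus $U'$ is nonempty, hence dense open; the witness $g_2$ shows the gradient-nonvanishing locus $U''$ is nonempty, hence dense open. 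Since $S_\mu$ is an irreducible affine space, $U' \cap U''$ is nonempty, and any $g$ in this intersection is prime with $\grad(g)$ nonvanishing on $\hat Z$, which forces $\hat X_g = V(g) \cap \hat Z$ to be smooth; thus $g \in U$. The one step to handle with care is verifying that nonvanishing of $\grad(g)$ on the whole of $\hat Z$ genuinely implies smoothness of the hypersurface $\hat X_g$ inside the ambient smooth variety $\hat Z$—this uses the Jacobian criterion on $\hat Z$ and the fact that $\hat Z$ consists of smooth points of $\bar Z = \KK^r$.
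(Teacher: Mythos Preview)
Your approach is exactly the paper's: write $U$ as the intersection of the open set $V_1 \subseteq S_\mu$ of primes (Lemma~\ref{lem:genirred}) and the open set $V_2 \subseteq S_\mu$ of polynomials whose gradient does not vanish on $\hat Z$ (Lemma~\ref{lem:gensmooth}), and then use irreducibility of $S_\mu$ for the non-emptiness criterion. The paper's proof is literally ``$U = V_1 \cap V_2$''.

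There is, however, a genuine gap in your write-up, and you put your finger on it yourself: smoothness of $\hat X_g$ is a priori only the condition that $\grad(g)$ does not vanish on $\hat X_g = V(g) \cap \hat Z$, whereas $V_2$ asks for non-vanishing on all of $\hat Z$. You use this implicitly when you say ``intersecting with $U'$ yields that $U$ is open'' and again in the forward direction of the non-emptiness criterion; without it you have only shown $V_1 \cap V_2 \subseteq U$ and that $V_1 \cap V_2$ is open, not that $U$ itself is open, nor that $g \in U$ yields a witness $g_2$. The missing step is an Euler-type identity. Since the $K$-grading on $S$ is pointed and $0 \ne \mu$ with $S_\mu \ne 0$, Lemma~\ref{lem:coarshom}~(i) forces $\mu$ to be non-torsion, so there is a homomorphism $\ell \colon K \to \ZZ$ with $\ell(\mu) \ne 0$. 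Homogeneity of $g$ then gives
\[
\sum_{i=1}^r \ell(w_i)\, T_i\, \frac{\partial g}{\partial T_i} \ = \ \ell(\mu)\, g,
\]
so $\grad(g)(z) = 0$ implies $g(z) = 0$. Hence the zero locus of $\grad(g)$ in $\hat Z$ is already contained in $\hat X_g$, the two conditions agree, and $U = V_1 \cap V_2$ on the nose. With this in hand your openness argument is complete, and for the forward direction you may simply take $g_1 = g_2 = g$.
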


\begin{proof}
By Lemma~\ref{lem:genirred},
the set $V_1$ of all prime polynomials 
of $S_\mu$ is open.
Moreover, by Lemma~\ref{lem:gensmooth},
the set of all polynomials of $S_\mu$
such that $\grad(g)$ has no zeroes 
in~$\hat{Z}$ is open.
The assertion follows from 
$U = V_1 \cap V_2$.
\end{proof}

\begin{corollary}
\label{cor:shmooth2gensmooth}
Let $X$ be a variety with a general hypersurface 
Cox ring $R$.
If $X$ is smooth, then $R$ is a smooth general 
hypersurface Cox ring.
\end{corollary}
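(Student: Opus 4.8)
The plan is to upgrade the smoothness of the single member $X = X_g$ to smoothness on a whole non-empty open subset of the parameter space $S_\mu$, which is exactly what Definition~\ref{def:genhypcr}~(iii) demands. Throughout I regard $g$ as \emph{general}: since $R$ is a general hypersurface Cox ring we may shrink its defining open set and assume that $g$ lies in the (open, non-empty) subset $U_0 \subseteq S_\mu$ of polynomials in which every monomial of degree $\mu$ occurs with non-zero coefficient. As $R_g$ is the Cox ring of $X$, it is in particular integral, so $g$ is prime in $S$.

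First I would treat quasismoothness, where openness is immediate. By Proposition~\ref{prop:locprops}~(iv), smoothness of $X_g$ gives that $X_g$ is quasismooth and locally factorial; quasismoothness means precisely that $\hat X_g = \bar X_g^{ss}(\lambda)$ is smooth. Thus $g$ belongs to the set $U_1 \subseteq S_\mu$ of polynomials that are prime in $S$ and have $\hat X_h$ smooth, and Proposition~\ref{prop:Xgqsmooth} tells us that $U_1$ is open; since $g \in U_1$ it is non-empty.

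The decisive step is local factoriality, and here the generality of $g$ does the work. By Proposition~\ref{prop:locprops}~(ii), local factoriality of $X_h$ is the requirement that $K$ be generated by $Q(\gamma_0 \cap \ZZ^r)$ for every $X$-face $\gamma_0$, so it depends on $h$ only through which faces $\gamma_I$ arise as $\bar X$-faces. In the hypersurface case $s = 1$, Remark~\ref{rem:barXfacecrit}~(i) says that $\gamma_I$ fails to be an $\bar X$-face exactly when the ideal $\bangle{h_I}$ contains a monomial, and in a polynomial ring $\bangle{h_I}$ contains a monomial if and only if $h_I$ is itself a non-zero monomial. For $h \in U_0$ the truncation $h_I$ is the sum of \emph{all} degree-$\mu$ monomials supported on $I$, so $h_I$ is a single monomial precisely when exactly one such monomial exists, a condition on $(w_1, \ldots, w_r, \mu)$ alone. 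Hence the collections of $\bar X$-faces and of $X$-faces are constant as $h$ ranges over $U_0$, and in particular coincide with those of our fixed $g$. Since $X_g$ is locally factorial, every $h \in U_0$ yields a locally factorial $X_h$.

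Putting the pieces together, on the non-empty open set $U' := U_0 \cap U_1$ each $h$ is prime with $\hat X_h$ smooth and $X_h$ locally factorial, so Proposition~\ref{prop:locprops}~(iv) makes every $X_h$ smooth; restricting the family to $U'$ exhibits $R$ as a smooth general hypersurface Cox ring. The subtle point is the third step. As $h$ varies over all of $S_\mu$ the set of $\bar X$-faces, and thus whether $X_h$ is locally factorial, can change in both directions, so local factoriality of one arbitrary member says nothing about a neighbouring one. What saves us is the hypothesis that $R$ is a \emph{general} hypersurface Cox ring: it lets us place $g$ in $U_0$, on which each $h_I$ carries all its admissible monomials and the face combinatorics is therefore frozen, so that smoothness of the single member $X_g$ propagates local factoriality, and hence smoothness, across the entire general family $U_0$.
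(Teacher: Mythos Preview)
Your strategy—separating quasismoothness (open by Proposition~\ref{prop:Xgqsmooth}) from local factoriality (constant once the $\bar X$-face combinatorics is frozen)—is exactly the intended route, and Steps~2--4 are correct. The paper states the corollary without proof immediately after Proposition~\ref{prop:Xgqsmooth}, so your decomposition is the one the authors have in mind.

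There is, however, a genuine gap in your first step. The hypothesis fixes a \emph{specific} polynomial $g$, namely the one with $\RRR(X)=R_g$ and $X_g$ smooth. Your sentence ``we may shrink its defining open set and assume that $g$ lies in $U_0$'' does not justify $g\in U_0$: shrinking $U$ to $U\cap U_0$ does not move $g$, and if $g\notin U_0$ the shrunk family no longer contains the only member known to be smooth. This matters for Step~3: when $g\notin U_0$ the collections of $\bar X_g$-faces and $\bar X_h$-faces (for $h\in U_0$) can differ in both directions—for instance, a face $\gamma_I$ supporting several $\mu$-monomials is an $\bar X_h$-face for generic $h$, but if $g$ happens to have exactly one of them then $\gamma_I$ is \emph{not} an $\bar X_g$-face—so local factoriality of $X_g$ does not automatically transfer to $X_h$.

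The cleanest repair is to invoke the convention recorded right after Definition~\ref{def:genhypcr}: a general hypersurface Cox ring may always be taken spread. Under this reading $g\in U_\mu$, and then Remark~\ref{rem:minamb} shows the $\bar X$-face collection (equivalently the minimal ambient toric variety $Z_g$) is constant on $U_\mu$; your Steps~2--4 then go through verbatim with $U_\mu$ in place of your $U_0$. If you prefer not to rely on that convention, you can instead argue that $\{X_h\}_{h\in U}$ is a projective family over $U$ and that smoothness of fibres is an open condition on the base; since $g$ lies in this locus it is non-empty, which is all Definition~\ref{def:genhypcr}(iii) asks.
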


\begin{proposition} 
\label{prop:bertini}
Consider the situation of Construction~\ref{constr:hypersurf}.
If $\mu \in \Cl(Z)$ is base point free,
then there is a non-empty open subset of $g \in S_\mu$ such that 
$X_g \cap Z^{\rm reg}$
is smooth.
\end{proposition}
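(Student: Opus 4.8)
The plan is to realise $X_g \cap Z^{\rm reg}$ as the pullback of a hyperplane under the morphism associated with the base point free class $\mu$, and then to invoke Bertini's theorem in the guise of generic smoothness on the smooth variety $Z^{\rm reg}$. First I would recall that, in the situation of Construction~\ref{constr:hypersurf}, one has $S_\mu = \Gamma(Z,\mathcal{O}_Z(\mu))$ and that for $g \in S_\mu$ the subvariety $X_g \subseteq Z$ is the zero divisor of the corresponding section. Over the smooth locus $Z^{\rm reg}$ the sheaf $\mathcal{O}_Z(\mu)$ restricts to a genuine line bundle $\mathcal{L}$, and $X_g \cap Z^{\rm reg}$ is the zero scheme of $g|_{Z^{\rm reg}} \in \Gamma(Z^{\rm reg},\mathcal{L})$. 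Since $\mu$ is base point free on $Z$, the restricted sections generate $\mathcal{L}$ at every point of $Z^{\rm reg}$; fixing a basis of $S_\mu$, they define a morphism
\[
\phi \colon Z^{\rm reg} \ \to \ \PP^N, \qquad N := \dim_\KK S_\mu - 1, \quad \PP^N := \PP(S_\mu^*),
\]
such that for every non-zero $g \in S_\mu$ one has $X_g \cap Z^{\rm reg} = \phi^{-1}(H_g)$, where $H_g \subseteq \PP^N$ is the hyperplane cut out by $g \in S_\mu = (S_\mu^*)^*$.

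The core step would be a Bertini argument through the incidence variety. I would consider
\[
\mathcal{I} \ := \ \{ (z,H) \in Z^{\rm reg} \times (\PP^N)^* ; \ \phi(z) \in H \}.
\]
The first projection $\mathcal{I} \to Z^{\rm reg}$ has all fibres isomorphic to $\PP^{N-1}$, namely the hyperplanes through the fixed point $\phi(z)$, so it exhibits $\mathcal{I}$ as a projective bundle over the smooth variety $Z^{\rm reg}$; in particular $\mathcal{I}$ is smooth. As $\KK$ is algebraically closed of characteristic zero, generic smoothness then applies to the second projection $\mathcal{I} \to (\PP^N)^*$: there is a dense open subset $U_0 \subseteq (\PP^N)^*$ over which every fibre is smooth, possibly empty. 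Unwinding the definitions, the scheme-theoretic fibre over the point $[H_g]$ is precisely $\phi^{-1}(H_g) = X_g \cap Z^{\rm reg}$, so each such fibre over $U_0$ is smooth; note that smoothness of the empty scheme is allowed, hence no non-emptiness has to be checked.

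It remains to descend to $S_\mu$. The property that $X_g \cap Z^{\rm reg}$ be smooth is invariant under rescaling $g$ by a non-zero constant, and a point of $(\PP^N)^* = \PP(S_\mu)$ is a line $\KK \cdot g$. Thus the preimage of the dense open $U_0$ under the quotient $S_\mu \setminus \{0\} \to \PP(S_\mu)$ is a non-empty open cone in $S_\mu$ whose members $g$ satisfy the assertion, which is the desired open subset. The main obstacle is the core Bertini step: one must secure the smoothness of $\mathcal{I}$, here guaranteed by the projective bundle structure over $Z^{\rm reg}$, and verify that the scheme-theoretic fibre of $\mathcal{I} \to (\PP^N)^*$ genuinely coincides with $X_g \cap Z^{\rm reg}$ after passing to the line bundle on the smooth locus; characteristic zero is essential for the appeal to generic smoothness.
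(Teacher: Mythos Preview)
Your proof is correct and takes essentially the same approach as the paper: both arguments invoke Bertini's theorem for the base point free linear system $\PP(S_\mu)$ on $Z$. The paper simply cites Kleiman's version~\cite[Thm.~4.1]{Kl} as a black box, whereas you unpack the standard proof via the incidence variety and generic smoothness in characteristic zero; the content is the same.
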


\begin{proof}
Observe that $\PP(S_\mu)$ is the complete linear system
associated with the divisor class $\mu \in \Cl(Z)$.
If $\mu$ is a base point free class on $Z$, we can apply
Bertini's first theorem~\cite[Thm.~4.1]{Kl} stating that
there is a non-empty open subset $U \subseteq S_\mu$ such that
for each $g \in U$ the singular locus of $X_g$
is precisely $X_g \cap Z^{\mathrm{sing}}$.
In particular, $X_g \cap Z^{\mathrm{reg}}$ is smooth for
all $g \in U$.
\end{proof}

\begin{remark}
\label{rem:minamb}
In the situation of Construction~\ref{constr:hypersurf},
let $N(g)$ be the Newton polytope of $g$.
For $I \subseteq \{1, \ldots,r\}$, let 
$\gamma_I \preccurlyeq \gamma$ and 
$g_I \in \KK[T_1,\ldots, T_r]$ be as 
in Definition~\ref{def:gI}.
Then~\cite[Prop.~3.1.1.12]{ADHL} yields
the equivalence of the following statements.
\begin{enumerate}
\item
We have $X_g \cap Z(\gamma_I) \ne \emptyset$.
\item  
We have$\bar X_g \cap \bar Z(\gamma_I) \ne \emptyset$.
\item  
The polynomial $g_I$ is not a monomial.
\item
The number of vertices of $N(g)$ contained
$\gamma_I$ differs from one.  
\end{enumerate}
In particular, for the non-empty open subset 
$U_\mu \subseteq S_\mu$ of polynomials of $S_\mu$ 
of polynomials $f \in S$ of degree
$\mu = \deg(g) \in K$ such that each monomial
of $S_\mu$ is a convex combination of monomials
of $f$, we obtain $Z_g = Z_{g'}$ for
all $g, g' \in U_\mu$.
\end{remark}

\begin{definition}
In the setting of Remark~\ref{rem:minamb},
we call $Z_\mu := Z_g$, where $g \in U_\mu$,
the \emph{$\mu$-minimal ambient toric variety}.
\end{definition}

\begin{corollary} 
\label{cor:rk2bertini}
In the setting of in Construction~\ref{constr:hypersurf},
assume $\rk(K) = 2$ and that  $Z_{\mu} \subseteq Z$ is smooth.
If $\mu \in \tau$ holds, then $\mu$ is base point free.
Moreover, then there is a non-empty open subset of
polynomials $g \in S_\mu$ such that
$X_g$ is smooth.
\end{corollary}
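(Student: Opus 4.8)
The plan is to treat the two assertions separately, establishing base point freeness of $\mu$ first and then feeding it into a Bertini-type argument together with the smoothness of $Z_\mu$. For the base point freeness the key idea is that the base locus of the linear system $S_\mu$ on $Z$ is forced into the minimal ambient variety $Z_\mu$, where the hypothesis that $Z_\mu$ is smooth makes it empty. Recall from the face description of Construction~\ref{rem:Coxconst} that $\mu$ fails to be base point free exactly when there is a relevant face $\gamma_I$, i.e.\ a $Z$-face with $\tau \subseteq Q(\gamma_I)$, whose degree monoid $Q(\gamma_I \cap \ZZ^r)$ does not contain $\mu$; equivalently, every degree $\mu$ monomial vanishes identically on the orbit $Z(\gamma_I)$. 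First I would observe that in this case $Z(\gamma_I)$ lies in the common zero set of all $g \in S_\mu$, hence in $X_g$ for generic $g \in U_\mu$; by Remark~\ref{rem:minamb} this gives $Z(\gamma_I) \subseteq Z_g = Z_\mu$, so $\gamma_I$ is a face of $Z_\mu$. Now the smoothness of $Z_\mu$ enters: its affine toric charts are smooth, so the degree monoids $Q(\gamma_I \cap \ZZ^r)$ of its faces are saturated in $K$, that is $Q(\gamma_I \cap \ZZ^r) = Q(\gamma_I) \cap K$ (compare the local factoriality criterion of Proposition~\ref{prop:locprops}). Since $\mu \in \tau \cap K \subseteq Q(\gamma_I) \cap K$, we land in the monoid after all, contradicting the choice of $\gamma_I$. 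Hence no such face exists and $\mu$ is base point free.

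\textbf{Smoothness of $X_g$.} Here I would combine three ingredients. As $\mu$ is now base point free, Proposition~\ref{prop:bertini} supplies a non-empty open set $U' \subseteq S_\mu$ such that $X_g \cap Z^{\mathrm{reg}}$ is smooth for all $g \in U'$. Next, since $Z_\mu$ is an open toric subvariety of $Z$ that is smooth as a variety, each of its points is a regular point of $Z$, so $Z_\mu \subseteq Z^{\mathrm{reg}}$. Consequently, for $g$ in the non-empty open set $U_\mu$ we have $X_g \subseteq Z_g = Z_\mu \subseteq Z^{\mathrm{reg}}$, whence $X_g = X_g \cap Z^{\mathrm{reg}}$. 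Finally, $S_\mu$ is an affine space and therefore irreducible, so the two non-empty open subsets $U'$ and $U_\mu$ intersect; for every $g \in U' \cap U_\mu$ the variety $X_g = X_g \cap Z^{\mathrm{reg}}$ is smooth, which is the claim.

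\textbf{Main obstacle.} The delicate point is the base point freeness step, not the subsequent Bertini argument. On a singular toric variety an ample class may fail to be base point free --- the degree one class on $\PP(1,1,2)$ has the torus fixed point as a base point --- so the statement genuinely needs the hypothesis that $Z_\mu$ is smooth, and the crux is to verify that the base locus cannot escape into the singular toric strata of $Z$. The saturation argument above is precisely what upgrades the cone membership $\mu \in \tau \subseteq Q(\gamma_I)$, which holds for free, to the monoid membership that encodes the absence of base points; the care required is to invoke smoothness of $Z_\mu$ rather than of the possibly singular $Z$, and to check, via Remark~\ref{rem:minamb}, that any face carrying a base point is indeed a face of $Z_\mu$.
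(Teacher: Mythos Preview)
Your smoothness argument for $X_g$ is correct and matches the paper's: Proposition~\ref{prop:bertini} plus $X_g \subseteq Z_\mu \subseteq Z^{\mathrm{reg}}$ does the job.

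The base-point-freeness argument, however, has a genuine gap. You invoke Proposition~\ref{prop:locprops}(ii) to conclude that smoothness of $Z_\mu$ forces the monoid $Q(\gamma_I \cap \ZZ^r)$ to be \emph{saturated}, i.e.\ equal to $Q(\gamma_I) \cap K$. But that proposition only tells you that $\{w_i : i \in I\}$ generates $K$ \emph{as a group}, which is strictly weaker. For instance, with $K = \ZZ^2$ and $w_1 = (1,0)$, $w_2 = (0,2)$, $w_3 = (1,1)$, the face $\gamma_{1,2,3}$ satisfies the local-factoriality condition, yet the monoid it generates misses $(0,1)$. So your step ``smooth $\Rightarrow$ saturated'' is not justified for faces with $|I| \ge 3$.

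The fix is to restrict to \emph{two-element} relevant faces $\gamma_{i,j}$: in rank two these are exactly the minimal relevant faces, and it suffices to check the membership $\mu \in Q(\gamma_{i,j} \cap \ZZ^r)$ for them. For such a face, the group-generation condition $\det(w_i,w_j) = \pm 1$ does imply saturation, since $w_i,w_j$ then form a Hilbert basis of their cone. With this restriction your contradiction argument goes through. The paper's proof does essentially the same reduction but organizes it as a dichotomy: for each relevant face it picks $i,j$ with $\tau^\circ \subseteq \cone(w_i,w_j)^\circ$ and argues either $w_i,w_j$ generate $K$ (so $\mu$ is a non-negative combination), or they do not, in which case $Z(\gamma_{i,j})$ is a singular stratum of $Z$, hence not in $Z_\mu$, forcing $g$ to have a monomial $T_i^{l_i}T_j^{l_j}$ and again $\mu = l_i w_i + l_j w_j$.
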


\begin{proof}
According to~\cite[Prop.~3.3.2.8]{ADHL}, the class 
$\mu \in \Cl(Z)$ is base point free on $Z$ 
if and only if the following holds:
$$
\mu
\ \in \ 
\bigcap_{\gamma_0 \in \rlv(Z)} Q(\gamma_0 \cap \mathbb{Z}^r).
$$
To check the latter, 
let $\gamma_0 \in \rlv(Z)$. 
As $K_\mathbb{Q}$ is two-dimensional, 
we find $1 \le i, j \le r$ with $e_i, e_j \in \gamma_0$ 
and $\lambda^\circ \subseteq \cone(w_i, w_j)^\circ$. 
If $w_i, w_j$ generate $K$ as a group, then 
$K$ is torsion-free, $w_i, w_j$ form a Hilbert basis 
for $\cone(w_i, w_j)$ and thus
$\mu$ is a positive combination of $w_i, w_j$. 
Otherwise, the toric orbit $Z(\gamma_{i,j})$ is not smooth, hence
not contained in $Z_\mu$.
The latter means $V(g) \cap \bar{Z}(\gamma_{i,j}) = \emptyset$,
which in turn shows that~$g$ has a monomial
of the form $T_i^{l_i} T_j^{l_j}$ where $l_i + l_j > 0$.
Thus, $\mu$ is a positive combination of $w_i$ and $w_j$.

Knowing that $\mu$ is base point free, we obtain 
the supplement as a direct consequence of smoothness 
of $Z_\mu$ and Propositon~\ref{prop:bertini}.
\end{proof}

\section{Proof of Theorem~\ref{thm:candidates}}

We work in the combinatorial framework for 
Mori dream spaces provided in the preceding 
sections.
The ground field is now $\KK=\CC$,
due to the references we use;
see Remark~\ref{rem:smoothfanoprops}.
The major part of proving Theorem~\ref{thm:candidates},
is to figure out the candidates for specifying 
data of smooth general hypersurface Cox rings
of Fano fourfolds of Picard number two.
Having found the candidates, the remaining 
task is to verify them, that means to show 
that the given specifying data indeed define 
a smooth general hypersurface Cox ring 
of a Fano fourfold.
The precise setting for the elaboration 
is the following.

\begin{setting}
\label{rem:effrho2}
Consider a $K$-graded algebra $R$
and $X = X(\lambda)$,
where $\lambda \in \Lambda(R)$ with 
$\lambda^\circ \subseteq \Mov(R)^\circ$,
as in Construction~\ref{constr:mds}.
Assume that $\dim(K_\QQ) = 2$ holds
and that we have an irredundant 
$K$-graded presentation
$$ 
R 
\ = \ 
R_g 
\ = \ 
\CC[T_1, \ldots, T_r] / \bangle{g}
$$
such that the $T_i$ define pairwise 
nonassociated $K$-primes in $R$.
Write $w_i \coloneqq \deg(T_i)$,
$\mu \coloneqq \deg(g)$ for the degrees 
in $K$, also when regarded in $K_\QQ$.
Suitably numbering $w_1, \ldots, w_r$,
we ensure counter-clockwise ordering,
that means that we always have
$$ 
i \le j \ \implies \ \det(w_i,w_j) \ge 0.
$$
Note that each ray of $\Lambda(R)$ is of 
the form $\varrho_i  = \cone(w_i)$, but not 
vice versa.
We assume $X$ to be $\QQ$-factorial.
According to Proposition~\ref{prop:locprops}~(i)
this means $\dim(\lambda) = 2$.
Then the  effective cone of $X$ is uniquely 
decomposed into three convex sets,
$$
\Eff(X) 
\ = \ 
\lambda^- \cup \lambda^\circ \cup \lambda^+,
$$
where~$\lambda^-$ and~$\lambda^+$ are convex polyhedral 
cones not intersecting
$\lambda^\circ = \Ample(X)$ and 
$\lambda^- \cap \lambda^+$ consists of the origin.
By Remark~\ref{rem:EffRMovR}~(ii)
and Proposition~\ref{prop:mdsprops}, 
each of $\lambda^-$ and~$\lambda^+$ 
contains at least two of the degrees $w_1,\ldots,w_r$.
\begin{center}
    \begin{tikzpicture}[scale=0.6]
    \path[fill=gray!60!] (0,0)--(3.5,2.9)--(0.6,3.4)--(0,0);
    \path[fill, color=black] (1.75,1.45) circle (0.5ex)  node[]{};
    \path[fill, color=black] (1.4,2.4) circle (0.0ex)  node[]{\small{$\lambda^\circ$}};
    \path[fill, color=black] (0.3,1.7) circle (0.5ex)  node[]{};
    \draw (0,0)--(0.6,3.4);
    \draw (0,0) --(-2,3.4);
  \path[fill, color=black] (-1,1.7) circle (0.5ex)  node[left]{\small{$w_r$}};
    \path[fill, color=black] (-0.35,2.65) circle (0.0ex)  node[]{\small{$\lambda^+$}};
    \draw (0,0)  -- (3.5,2.9);
    \draw (0,0)  -- (3.5,0.5);
  \path[fill, color=black] (1.75,0.25) circle (0.5ex)  node[below]{\small{$w_1$}};
    \path[fill, color=black] (2.6,1.2) circle (0.0ex)  node[]{\small{$\lambda^-$}};
    \path[fill, color=white] (4,1.9) circle (0.0ex);
  \end{tikzpicture}   
\end{center}
Note that $\lambda^-$ as well as $\lambda^+$ might be 
one-dimensional.
As a GIT-cone in $K_\QQ \cong \QQ^2$,
the closure $\lambda = \SAmple(X)$ 
of $\lambda^\circ = \Ample(X)$ is the
intersection of two projected 
$\bar X$-faces
and thus we find at least one of the 
$w_i$ on each of its bounding rays.
\end{setting}

\begin{remark} 
Setting~\ref{rem:effrho2}
is respected by orientation preserving 
automorphisms of~$K$.
If we apply an orientation reversing 
automorphism of $K$, 
then we regain Setting~\ref{rem:effrho2}
by reversing the numeration of 
$w_1, \ldots, w_r$.
Moreover, we may interchange the numeration 
of $T_i$ and $T_j$ if $w_i$ and $w_j$
share a common ray without 
affecting Setting~\ref{rem:effrho2}.
We call these operations \emph{admissible
coordinate changes}. 
\end{remark}

\begin{remark} 
\label{rem:majorconsts}
In Setting~\ref{rem:effrho2}, consider the 
rays $\varrho_i := \cone(w_i) \subseteq \QQ^2$,
where $i = 1, \ldots, r$,
and the degree $\mu = \deg(g)$ of the relation.
Set
$$
\Gamma
\ := \
\varrho_1 \cup \ldots \cup \varrho_r,
\qquad\qquad
\Gamma^\circ 
\ := \
\Gamma \cap \Eff(R)^\circ.
$$
Then a suitable admissible coordinate change
turns the setting into one of the following
{\small
$$
\begin{array}{c}
\begin{tikzpicture}[scale=0.5]
\coordinate(o) at (0,0);
\coordinate(t) at (-1,-1);
\coordinate(r1) at (2,0);
\coordinate(r2) at (2,1);
\coordinate(r3) at (1,2);
\coordinate(r4) at (-1,2);
\coordinate(w1) at (1,0);
\coordinate(w2) at (1,0.5);
\coordinate(w3) at (0.5,1);
\coordinate(w4) at (-0.5,1);
\coordinate(mu) at (1.25,1.25);
\path[fill=gray!60!] (o)--(r1)--(r2)--(r3)--(r4)--(o);
\draw[thick] (o)--(r1);
\draw[thick] (o)--(r2);
\draw[thick] (o)--(r3);
\draw[thick] (o)--(r4);
\path[fill, color=black] (w1) circle (0.6ex);
\path[fill, color=black] (w2) circle (0.6ex);
\path[fill, color=black] (w3) circle (0.6ex);
\path[fill, color=black] (w4) circle (0.6ex);
\filldraw[fill=white, draw=black] (mu) circle (0.6ex);
\end{tikzpicture}   
\\
\\[-3pt]
{\rm{(I)}~\mu \not\in \Gamma}
\\[-3pt] \
\end{array}
\quad
\begin{array}{c}
\begin{tikzpicture}[scale=0.5]
\coordinate(o) at (0,0);
\coordinate(t) at (-1,-1);
\coordinate(r1) at (2,0);
\coordinate(r2) at (2,1);
\coordinate(r3) at (1,2);
\coordinate(r4) at (-1,2);
\coordinate(w1) at (1,0);
\coordinate(w1a) at (0.5,0);
\coordinate(w2) at (1,0.5);
\coordinate(w3) at (0.5,1);
\coordinate(w4) at (-0.5,1);
\coordinate(w4a) at (-0.25,.5);
\coordinate(mu) at (0.75,1.5);
\path[fill=gray!60!] (o)--(r1)--(r2)--(r3)--(r4)--(o);
\draw[thick] (o)--(r1);
\draw[thick] (o)--(r2);
\draw[thick] (o)--(r3);
\draw[thick] (o)--(r4);
\path[fill, color=black] (w1) circle (0.6ex);
\path[fill, color=black] (w1a) circle (0.6ex);
\path[fill, color=black] (w2) circle (0.6ex);
\path[fill, color=black] (w3) circle (0.6ex);
\path[fill, color=black] (w4) circle (0.6ex);
\path[fill, color=black] (w4a) circle (0.6ex);
\filldraw[fill=white, draw=black] (mu) circle (0.6ex);
\end{tikzpicture}
\\
\hphantom{\rm{(IIa)}~}\mu \in \Gamma^\circ
\\[-3pt]
\rm{(IIa)}~\varrho_1 = \varrho_2
\\[-3pt]
\hphantom{\rm{(IIa)}~}\varrho_{r-1} = \varrho_r
\end{array}
\quad
\begin{array}{c}
\begin{tikzpicture}[scale=0.5]
\coordinate(o) at (0,0);
\coordinate(t) at (-1,-1);
\coordinate(r1) at (2,0);
\coordinate(r2) at (2,1);
\coordinate(r3) at (1,2);
\coordinate(r4) at (-1,2);
\coordinate(w1) at (1,0);
\coordinate(w1a) at (0.5,0);
\coordinate(w2) at (1,0.5);
\coordinate(w3) at (0.5,1);
\coordinate(w4) at (-0.5,1);
\coordinate(w4a) at (-0.25,.5);
\coordinate(mu) at (0.75,1.5);
\path[fill=gray!60!] (o)--(r1)--(r2)--(r3)--(r4)--(o);
\draw[thick] (o)--(r1);
\draw[thick] (o)--(r2);
\draw[thick] (o)--(r3);
\draw[thick] (o)--(r4);
\path[fill, color=black] (w1) circle (0.6ex);
\path[fill, color=black] (w2) circle (0.6ex);
\path[fill, color=black] (w3) circle (0.6ex);
\path[fill, color=black] (w4) circle (0.6ex);
\path[fill, color=black] (w4a) circle (0.6ex);
\filldraw[fill=white, draw=black] (mu) circle (0.6ex);
\end{tikzpicture}
\\
\hphantom{\rm{(IIb)}~}\mu \in \Gamma^\circ
\\[-3pt]
\rm{(IIb)}~\varrho_1 \ne \varrho_2
\\[-3pt]
\hphantom{\rm{(IIb)}~}\varrho_{r-1} = \varrho_r
\end{array}
\quad
\begin{array}{c}
\begin{tikzpicture}[scale=0.5]
\coordinate(o) at (0,0);
\coordinate(t) at (-1,-1);
\coordinate(r1) at (2,0);
\coordinate(r2) at (2,1);
\coordinate(r3) at (1,2);
\coordinate(r4) at (-1,2);
\coordinate(w1) at (1,0);
\coordinate(w1a) at (0.5,0);
\coordinate(w2) at (1,0.5);
\coordinate(w3) at (0.5,1);
\coordinate(w4) at (-0.5,1);
\coordinate(w4a) at (-0.25,.5);
\coordinate(mu) at (0.75,1.5);
\path[fill=gray!60!] (o)--(r1)--(r2)--(r3)--(r4)--(o);
\draw[thick] (o)--(r1);
\draw[thick] (o)--(r2);
\draw[thick] (o)--(r3);
\draw[thick] (o)--(r4);
\path[fill, color=black] (w1) circle (0.6ex);
\path[fill, color=black] (w2) circle (0.6ex);
\path[fill, color=black] (w3) circle (0.6ex);
\path[fill, color=black] (w4) circle (0.6ex);
\filldraw[fill=white, draw=black] (mu) circle (0.6ex);
\end{tikzpicture}
\\
\hphantom{\rm{(IIc)}~}\mu \in \Gamma^\circ
\\[-3pt]
\rm{(IIc)}~\varrho_1 \ne \varrho_2
\\[-3pt]
\hphantom{\rm{(IIc)}~}\varrho_{r-1} \ne \varrho_r
\end{array}
\quad
\begin{array}{c}
\begin{tikzpicture}[scale=0.5]
\coordinate(o) at (0,0);
\coordinate(t) at (-1,-1);
\coordinate(r1) at (2,0);
\coordinate(r2) at (2,1);
\coordinate(r3) at (1,2);
\coordinate(r4) at (-1,2);
\coordinate(w1) at (1,0);
\coordinate(w2) at (1,0.5);
\coordinate(w3) at (0.5,1);
\coordinate(w4) at (-0.5,1);
\coordinate(mu) at (1.5,0);
\path[fill=gray!60!] (o)--(r1)--(r2)--(r3)--(r4)--(o);
\draw[thick] (o)--(r1);
\draw[thick] (o)--(r2);
\draw[thick] (o)--(r3);
\draw[thick] (o)--(r4);
\path[fill, color=black] (w1) circle (0.6ex);
\path[fill, color=black] (w2) circle (0.6ex);
\path[fill, color=black] (w3) circle (0.6ex);
\path[fill, color=black] (w4) circle (0.6ex);
\filldraw[fill=white, draw=black] (mu) circle (0.6ex);
\end{tikzpicture}   
\\
\\[-3pt]
{\rm{(III)}~\mu \in \varrho_1}
\\[-3pt] \
\end{array}
$$
}%
where the figures exemplarily sketch
the case $r=5$, the black dots indicate
the generator degrees and the white
dot stands for the relation degree.
\end{remark}

Our proof of Theorem~\ref{thm:candidates} will be
split into Parts I, IIa, IIb, IIc and III
according to the constellations of
Remark~\ref{rem:majorconsts}.
We exemplarily present Parts I, IIa and III.
The remaining parts use analogous arguments
and will be made available in~\cite{Ma}.
The reason why we restrict 
Theorem~\ref{thm:candidates}
to the ground field $\KK = \CC$ 
is that we use the following 
references on complex Fano 
varieties.

\begin{remark} 
\label{rem:smoothfanoprops}
Let $X$ be a smooth complex Fano variety.
Then the divisor class group $\Cl(X)$ 
of $X$ is torsion free; 
see for instance~\cite[Prop.~2.1.2]{AG5}.
Moreover, if $\dim(X) = 4$ holds,
then~\cite[Rem.~3.6]{Ca} tells us 
that any $\QQ$-factorial projective variety
being isomorphic in codimension one 
to~$X$ is smooth as well.
In terms of Construction~\ref{constr:mds}, 
the latter means that all varieties $X(\eta)$
are smooth, where $\eta \in \Lambda(R)$ 
is full-dimensional with 
$\eta^\circ \subseteq \Mov(R)^\circ$.
\end{remark}

We treat Case~\ref{rem:majorconsts}~I
that means that the degree of 
the defining relation is not proportional 
to any of the Cox ring generator degrees.
Here are first constraints on the possible
specifying data in this situation.

\begin{proposition} 
\label{prop:deginrelint-collaps}
In Setting~\ref{rem:effrho2},
assume that $r = 7$, $K \cong \ZZ^2$ holds,
every two-dimensional
$\lambda \in \Lambda(R)$ with 
$\lambda^\circ \subseteq \Mov(R)^\circ$
defines a locally factorial 
$X(\lambda)$
and $\mu$ doesn't lie on any of
the rays $\varrho_1, \ldots, \varrho_7$. 
Then, after a suitable 
admissible coordinate change, we have 
$\mu \in \cone(w_4, w_5)^\circ$
and one of the following holds:

\smallskip

\begin{minipage}[t]{0.5\textwidth}
\begin{enumerate}
\item $w_1 = w_2$ and $w_5 = w_6$,
\item $w_1 = w_2$ and $w_6 = w_7$,
\item $w_2 = w_3$ and $w_5 = w_6$,
\end{enumerate}
\end{minipage} %
\begin{minipage}[t]{0.5\textwidth}
\begin{enumerate}
\setcounter{enumi}{3}
\item $w_2 = w_3$ and $w_6 = w_7$,
\item $w_3 = w_4$ and $w_5 = w_6$,
\item $w_3 = w_4$ and $w_6 = w_7$.
\end{enumerate}
\end{minipage}
\end{proposition}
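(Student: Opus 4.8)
The plan is to first locate $\mu$ using the factoriality of the grading, and then to read off the ray coincidences from local factoriality. Since $R=R_g$ carries the single relation $g$ and the $T_i$ define $K$-primes, Proposition~\ref{prop:hypmov} applies with $s=1$ and gives $\mu\in\cone(w_k;\ k\neq i,j)$ for all $1\le i<j\le 7$; that is, $\mu$ survives in every cone obtained by deleting two of the generator degrees. As $\mu$ avoids all rays $\varrho_i$, each $w_i$ lies strictly to one side of the ray $\QQ_{\ge 0}\mu$; let $L$ and $U$ collect the indices with $w_i$ clockwise, resp.\ counter-clockwise, of $\mu$. Deleting the two generators on the smaller side would push $\mu$ out of the cone spanned by the remaining five, so $|L|,|U|\ge 3$, whence $\{|L|,|U|\}=\{3,4\}$. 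After an orientation-reversing admissible coordinate change (reversing the numbering) I may assume $|L|=4$, giving $L=\{1,2,3,4\}$, $U=\{5,6,7\}$ and $\mu\in\cone(w_4,w_5)^\circ$.

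The engine for the coincidences is that local factoriality of $X(\lambda)$ for every chamber forces, through Proposition~\ref{prop:locprops}(ii), the unimodularity $\det(w_i,w_j)=\pm 1$ for every pair $w_i,w_j$ spanning a two-dimensional $X$-face. By Remark~\ref{rem:barXfacecrit}(iii), a pair on distinct rays straddling a chamber $\lambda\subseteq\Mov(R)$ gives such a face as soon as $g$ has no monomial $T_i^aT_j^b$, and this is automatic whenever $\mu\notin\ZZ_{\ge 0}w_i+\ZZ_{\ge 0}w_j$ --- in particular for every pair inside $L$ or inside $U$, where already $\mu\notin\cone(w_i,w_j)$, and also for the mixed pairs whose degrees do not represent $\mu$ over $\ZZ_{\ge 0}$. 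First I would collect all resulting constraints $\det(w_i,w_j)=\pm 1$.

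For the claim on $L$, suppose $w_1,\dots,w_4$ occupy four distinct rays. Then all within-$L$ faces are $\bar X$-faces, and those straddling the moving chambers $\cone(w_2,w_3)$ and $\cone(w_3,w_4)$ force $\det(w_2,w_3)=\det(w_3,w_4)=\det(w_2,w_4)=\det(w_1,w_4)=\det(w_1,w_3)=\pm 1$. In the basis $(w_2,w_4)$ the first three give $w_3=w_2+w_4$, and the counter-clockwise order together with $\det(w_1,w_4)=\pm 1$ gives $w_1=w_2+qw_4$ with $q\le -1$; but then $\det(w_1,w_3)=1-q\ge 2$, a contradiction. Hence $L$ spans at most three rays, i.e.\ one of $w_1=w_2$, $w_2=w_3$, $w_3=w_4$ holds.

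For $U$ the situation is subtler, and I expect this to be the main obstacle. The within-$U$ faces meeting $\Mov(R)$ only yield $\det(w_5,w_6)=\det(w_5,w_7)=\pm 1$: the ray $\varrho_7$ bounds $\Eff(R)$ counter-clockwise, so $\cone(w_6,w_7)$ lies outside $\Mov(R)$ and $\gamma_{6,7}$ is never an $X$-face. These two relations alone are consistent with three distinct rays, so the decisive constraints must come from the mixed faces $\gamma_{i,6},\gamma_{i,7}$ with $i\le 4$. Writing $w_7=w_6-kw_5$ with $k\ge 1$ (forced by the within-$U$ relations) and using that $\mu\in\cone(w_4,w_5)^\circ$ sits far from $\varrho_7$, a finite check over the admissible positions of $\mu$ should show that some mixed pair fails to represent $\mu$ over $\ZZ_{\ge 0}$ yet has determinant of absolute value at least two --- violating unimodularity once pointedness of $\Eff(R)$ is taken into account --- unless $w_5=w_6$ or $w_6=w_7$. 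This case analysis, rather than a single clean computation as for $L$, is the technical heart. Finally, since coincident degrees carry consecutive indices under the counter-clockwise numbering, combining the two coincidence statements produces exactly the six cases (i)--(vi).
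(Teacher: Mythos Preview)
Your localisation of $\mu$ is fine and matches the paper. The real difficulties start afterwards, and both halves of your argument have gaps.

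On the $L$ side, your contradiction only shows that $\varrho_1,\ldots,\varrho_4$ cannot all be distinct; it does not give $w_i=w_j$. Two generator degrees can sit on the same ray without being equal, and the proposition asserts actual equalities. Moreover, your computation uses the chambers $\cone(w_2,w_3)$ and $\cone(w_3,w_4)$, which degenerate precisely when $\varrho_2=\varrho_3$ or $\varrho_3=\varrho_4$; so the case you need to finish is exactly the one your argument does not cover.

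On the $U$ side you correctly identify that the within-$U$ determinants are not enough and that mixed faces $\gamma_{i,j}$ with $i\le 4$, $j\ge 5$ must enter. But you cannot control whether such a $\gamma_{i,j}$ is an $\bar X$-face: since $\mu\in\cone(w_4,w_5)^\circ\subseteq\cone(w_i,w_j)$, the relation $g$ may well have a monomial $T_i^aT_j^b$. Your proposed ``finite check over admissible positions of $\mu$'' does not get off the ground because $\mu$ is not bounded at this stage.

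The missing tool is Lemma~\ref{lem:threegenerate}. The point is that even if both $\gamma_{i,k}$ and $\gamma_{j,k}$ fail to be $\bar X$-faces (with $i,j\le 4$ and $k\ge 5$, say), the three-face $\gamma_{i,j,k}$ is one, and local factoriality then forces $w_i,w_j,w_k$ to generate $K$. The only hypothesis needed is that $g$ has no pure power $T_k^{l}$, which is automatic here since $\mu$ lies on no $\varrho_k$. The paper exploits this to interlock the two sides: first, assuming two of $w_1,\ldots,w_4$ share a ray, Lemma~\ref{lem:threegenerate} together with Lemma~\ref{lem:2on1ray} and Lemma~\ref{lem:4det} forces two of $w_5,w_6,w_7$ to be \emph{equal}; second, one shows that two of $w_1,\ldots,w_4$ do share a ray (this is essentially your $L$-computation in the case $\varrho_2\ne\varrho_3$, via Lemma~\ref{lem:4det}); finally, in the remaining case $\varrho_2=\varrho_3$ one feeds the already-established $U$-equality back through Lemma~\ref{lem:threegenerate} to conclude that $w_2,w_3$ are both primitive on their common ray, hence equal. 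So the two sides are not independent, and trying to treat $U$ on its own is why you got stuck.
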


\begin{lemma} 
\label{lem:twofaces} 
Consider a locally factorial $X = X(\lambda)$ 
arising from Construction~\ref{rem:Coxconst}
with only one relation, i.e., $s=1$. 
Let $i, j$ with $\lambda \subseteq \cone(w_i,w_j)$.
Then either $w_i,w_j$ generate $K$ as a group,
or $g_1$ has precisely one monomial of the form 
$T_i^{l_i} T_j^{l_j}$, where $l_i+l_j > 0$.
\end{lemma}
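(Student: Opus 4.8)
The plan is to decide, for the single face $\gamma_{i,j} = \cone(e_i,e_j)$ of the orthant $\gamma = \QQ_{\ge 0}^r$, whether it is an $X$-face, and to read off the conclusion from the two governing criteria: the local factoriality criterion of Proposition~\ref{prop:locprops}~(ii) and the $\bar X$-face criterion of Remark~\ref{rem:barXfacecrit}~(iii). First I would record the linear independence of $w_i,w_j$. Since $X$ is locally factorial it is in particular $\QQ$-factorial, so by Proposition~\ref{prop:locprops}~(i) we have $\dim(\lambda) = \dim(K_\QQ)$; in the rank two situation this gives $\dim(\lambda)=2$, and as $\lambda \subseteq \cone(w_i,w_j)$ the right hand cone is then two-dimensional, whence $w_i,w_j$ are linearly independent in $K_\QQ$. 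Moreover $\lambda^\circ \subseteq \cone(w_i,w_j)^\circ = Q(\gamma_{i,j})^\circ$, so by the definition of an $X$-face in Construction~\ref{rem:Coxconst} the face $\gamma_{i,j}$ is an $X$-face if and only if it is an $\bar X$-face.

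Next I would run the dichotomy. Assume $w_i,w_j$ do not generate $K$ as a group. The group generated by $Q(\gamma_{i,j}\cap\ZZ^r)$ is precisely $\langle w_i,w_j\rangle$, so this set fails to generate $K$, and the locally factorial criterion of Proposition~\ref{prop:locprops}~(ii) then forbids $\gamma_{i,j}$ from being an $X$-face. By the equivalence just established, $\gamma_{i,j}$ is not an $\bar X$-face either. Since $w_i,w_j$ are linearly independent and $s=1$, Remark~\ref{rem:barXfacecrit}~(iii) now forces $g_1$ to contain a monomial of the form $T_i^{l_i}T_j^{l_j}$ with $l_i,l_j \in \ZZ_{\ge 0}$.

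It remains to sharpen this to \emph{precisely one} such monomial with $l_i+l_j>0$. Uniqueness is pure homogeneity: any two monomials $T_i^{l_i}T_j^{l_j}$ and $T_i^{l_i'}T_j^{l_j'}$ of $g_1$ both have degree $\mu$, so $(l_i-l_i')w_i = (l_j'-l_j)w_j$, and linear independence of $w_i,w_j$ yields $l_i=l_i'$ and $l_j=l_j'$. Positivity is immediate since $g_1$ is homogeneous of degree $\mu \ne 0$ and hence carries no constant term, so $l_i+l_j>0$. Combining the two exhaustive cases, either $w_i,w_j$ generate $K$ or $g_1$ has exactly one monomial $T_i^{l_i}T_j^{l_j}$ with $l_i+l_j>0$, as claimed.

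I do not anticipate a genuine obstacle: the argument is essentially an assembly of the cited face criteria. The one point demanding care, and the place where the hypothesis of local factoriality is really used, is the preliminary step establishing $\dim(\lambda)=2$ and thereby the linear independence of $w_i,w_j$ — this independence is what simultaneously legitimizes the application of Remark~\ref{rem:barXfacecrit}~(iii) and drives the degree-counting uniqueness argument, so I would make sure to isolate it cleanly before the main dichotomy.
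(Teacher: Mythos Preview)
Your proof is correct and follows essentially the same line as the paper's: both use Proposition~\ref{prop:locprops}~(i) to get $\lambda^\circ \subseteq Q(\gamma_{i,j})^\circ$, then split on whether $\gamma_{i,j}$ is an $X$-face and invoke Proposition~\ref{prop:locprops}~(ii) and the $\bar X$-face criterion. The paper's write-up is terser---it organizes the dichotomy by whether $\gamma_{i,j}$ is an $X$-face rather than by whether $w_i,w_j$ generate $K$, and it absorbs your explicit uniqueness-by-homogeneity step into the statement ``not an $\bar X$-face'' via Remark~\ref{rem:barXfacecrit}~(i) (the ideal $\langle g_{1,I}\rangle$ containing a monomial forces $g_{1,I}$ itself to be a single monomial)---but the content is the same.
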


\begin{proof}
If $\gamma_{i,j}$ is an $X$-face, then 
Proposition~\ref{prop:locprops}~(ii) tells
us that $w_i$ and $w_j$ generate~$K$ as a group.
Now consider the case that  $\gamma_{i,j}$ 
not an $X$-face.
Then we must have 
$\lambda^\circ \not\subseteq Q(\gamma_{i,j})^\circ$ 
or $\gamma_{i,j}$ is not an $\bar X$-face.
Proposition~\ref{prop:locprops}~(i) excludes
the first possibility.
Thus, the second one holds, which in turn means 
that $g_1$ has precisely one monomial of the form 
$T_i^{l_i} T_j^{l_j}$, where $l_i+l_j > 0$.
\end{proof}

\begin{lemma} 
\label{lem:threegenerate}
Let $X = X(\lambda)$ be as in Setting~\ref{rem:effrho2}
and let $1 \leq i < j < k \leq r$.
If $X$ is locally factorial, 
then $w_i, w_j, w_k$ generate $K$ as a group
provided  that one of the following holds:
\begin{enumerate}
\item 
$w_i, w_j \in \lambda^-$, $w_k \in \lambda^+$ and $g$ 
has no monomial of the form $T_k^{l_k}$,
\item 
$w_i \in \lambda^-$, $w_j, w_k \in \lambda^+$ and $g$ 
has no monomial of the form $T_i^{l_i}$.
\end{enumerate}
\end{lemma}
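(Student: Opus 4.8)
The plan is to deduce the statement from local factoriality via Proposition~\ref{prop:locprops}~(ii): it suffices to show that $\gamma_{i,j,k}$ is an $X$-face, since then $K$ is generated by $Q(\gamma_{i,j,k} \cap \ZZ^r)$, which is precisely the subgroup $\langle w_i, w_j, w_k\rangle$. Recall from Construction~\ref{rem:Coxconst} that an $X$-face is an $\bar X$-face $\gamma_0$ with $\lambda^\circ \subseteq Q(\gamma_0)^\circ$, so I must verify both of these conditions for $\gamma_{i,j,k}$. Throughout I treat case~(i); case~(ii) is symmetric with the roles of $\lambda^-$ and $\lambda^+$ interchanged.

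First I would settle the interior condition $\lambda^\circ \subseteq \cone(w_i, w_j, w_k)^\circ$. In case~(i) the degrees $w_i, w_j$ lie in $\lambda^-$ and $w_k$ lies in $\lambda^+$, and since $\lambda^-$ and $\lambda^+$ sit on opposite sides of the two-dimensional cone $\lambda = \SAmple(X)$, the two extreme rays of $\cone(w_i, w_j, w_k)$ lie weakly beyond the bounding rays of $\lambda$ on either side. Hence $\cone(w_i, w_j, w_k) \supseteq \lambda$, and because $\dim(\lambda) = 2$ this even gives $\lambda^\circ \subseteq \cone(w_i, w_j, w_k)^\circ$. The same straddling observation yields $\lambda \subseteq \cone(w_i, w_k)$ and $\lambda \subseteq \cone(w_j, w_k)$, which I will need next.

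The main point is to show that $\gamma_{i,j,k}$ is an $\bar X$-face, equivalently, by Remark~\ref{rem:barXfacecrit}~(i) with $s=1$, that the polynomial $g_{\{i,j,k\}}$ of Definition~\ref{def:gI} is not a monomial. Here I would apply Lemma~\ref{lem:twofaces} to the pairs $(i,k)$ and $(j,k)$, both admissible by the containments just established. If either pair generates $K$ as a group, then \emph{a fortiori} $w_i, w_j, w_k$ do, and we are done. Otherwise Lemma~\ref{lem:twofaces} produces a unique monomial $T_i^{p_1}T_k^{q_1}$ and a unique monomial $T_j^{p_2}T_k^{q_2}$ of $g$, each of positive total degree. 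The hypothesis that $g$ has no monomial of the form $T_k^{l_k}$ forces $p_1 \ge 1$ and $p_2 \ge 1$; thus these two monomials are distinct, both involve only variables from $\{T_i,T_j,T_k\}$, and both survive in $g_{\{i,j,k\}}$. Consequently $g_{\{i,j,k\}}$ has at least two monomials and is not a monomial.

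Combining the two verifications, $\gamma_{i,j,k}$ is an $X$-face, and Proposition~\ref{prop:locprops}~(ii) gives the assertion. Case~(ii) runs verbatim after swapping $\lambda^-$ and $\lambda^+$: one applies Lemma~\ref{lem:twofaces} to the straddling pairs $(i,j)$ and $(i,k)$ and uses the absence of a pure power $T_i^{l_i}$ to force two distinct monomials $T_i^{p_1}T_j^{q_1}$ and $T_i^{p_2}T_k^{q_2}$ in $g_{\{i,j,k\}}$. I expect the only genuinely delicate step to be this $\bar X$-face verification, namely correctly packaging the two unique cross-monomials supplied by Lemma~\ref{lem:twofaces} with the no-pure-power hypothesis to guarantee distinctness; the positional claim $\lambda^\circ \subseteq \cone(w_i,w_j,w_k)^\circ$ is routine convex geometry in $\QQ^2$.
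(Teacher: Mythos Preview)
Your proof is correct and follows essentially the same route as the paper: apply Lemma~\ref{lem:twofaces} to the two straddling pairs, use the no-pure-power hypothesis to force two distinct monomials in $g_{\{i,j,k\}}$, conclude that $\gamma_{i,j,k}$ is an $X$-face, and invoke Proposition~\ref{prop:locprops}~(ii). The only cosmetic difference is that the paper dispatches case~(ii) via an admissible coordinate change back to case~(i) rather than rerunning the argument symmetrically, and it leaves the interior condition $\lambda^\circ \subseteq Q(\gamma_{i,j,k})^\circ$ implicit.
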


\begin{proof}
Assume that~(i) holds.
If $K$ is generated by $w_i, w_k$ or by $w_j, w_k$, 
then we are done. 
Consider the case that none of the pairs
$w_i, w_k$ and $w_j, w_k$ generates~$K$. 
Applying Lemma~\ref{lem:twofaces} to 
each of the pairs shows that $g$ has 
precisely one monomial of the form 
$T_i^{l_i} T_k^{l_k}$ with $l_i+l_{k} > 0$
and precisely one monomial of the form 
$T_j^{l_j} T_k^{l_k'}$ with $l_j+l_k' > 0$.
By assumption, we must have $l_i,l_j > 0$.
We conclude that $\gamma_{i,j,k}$ is an
$X$-face. 
Since $X$ is locally factorial, 
Proposition~\ref{prop:locprops}~(ii)
yields that $w_i, w_j, w_k$ generate~$K$. 
If~(ii) holds, then a suitable admissible 
coordinate change leads to~(i).
\end{proof}

\begin{lemma} 
\label{lem:2on1ray}
Assume $u, w_1, w_2$ generate the abelian group $\mathbb{Z}^2$.
If $w_i = a_i w$ holds with a primitive $w \in \mathbb{Z}^2$
and $a_i \in \mathbb{Z}$,
then $(u, w)$ is a basis for $\mathbb{Z}^2$ 
and $u$ is primitive. 
\end{lemma}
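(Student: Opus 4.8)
The plan is to reduce the three generators $u, w_1, w_2$ to just two and then read off both conclusions from a single determinant. Since $w_1 = a_1 w$ and $w_2 = a_2 w$ both lie on the line $\QQ w$, the subgroup they generate is cyclic: by B\'ezout, $\langle w_1, w_2 \rangle = (a_1\ZZ + a_2\ZZ)\, w = d\,\ZZ\, w$, where $d := \gcd(a_1,a_2)$. Hence the hypothesis $\langle u, w_1, w_2\rangle = \ZZ^2$ becomes $\langle u, d w\rangle = \ZZ^2$. In particular $d \neq 0$, since otherwise $u$ alone would have to generate $\ZZ^2$, which is impossible for a single vector.

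Next I would invoke the determinant criterion for generation: two integer vectors $v_1, v_2$ generate $\ZZ^2$ as a group if and only if the matrix with columns $v_1, v_2$ lies in $\GL_2(\ZZ)$, equivalently $\det(v_1, v_2) = \pm 1$. Indeed, if $v_1, v_2$ generate, then the associated endomorphism of $\ZZ^2$ is surjective, hence an isomorphism over $\ZZ$. Applying this to $u$ and $dw$ yields
\[
d \cdot \det(u,w) \ = \ \det(u, dw) \ = \ \pm 1 .
\]
Since $d$ and $\det(u,w)$ are integers, their product being a unit forces $d = \pm 1$ and $\det(u,w) = \pm 1$.

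It then remains to extract the two assertions. The equality $\det(u,w) = \pm 1$ says precisely that $(u,w)$ is a $\ZZ$-basis of $\ZZ^2$, giving the first claim. For the second, primitivity of $u$ follows immediately: were $u = c u'$ with $u' \in \ZZ^2$ and $|c| \geq 2$, then $\det(u,w) = c\,\det(u',w)$ would be divisible by $c$, contradicting $\det(u,w) = \pm 1$.

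I do not anticipate a genuine obstacle; the only points requiring care are the determinant criterion for a pair of lattice vectors to \emph{generate} (rather than merely span) $\ZZ^2$, and the elementary but essential observation that an integer equation $d \cdot \det(u,w) = \pm 1$ splits the unit across both factors. As a side remark, the stated primitivity of $w$ is not actually used in the argument: the conclusion $\det(u,w) = \pm 1$ already forces $w$ to be primitive, so the hypothesis is automatically consistent.
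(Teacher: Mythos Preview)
Your argument is correct. The paper states this lemma without proof, treating it as an elementary fact about $\ZZ^2$; your write-up supplies exactly the kind of short determinant computation the authors are implicitly relying on, and your side remark that primitivity of $w$ is forced by the conclusion is also valid.
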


\begin{lemma} 
\label{lem:4det}
Let $w_1, \dotsc, w_4 \in \mathbb{Z}^2$ such that 
$\det(w_1, w_3)$, $\det(w_1, w_4)$,
$\det(w_2, w_3)$ and $\det(w_2, w_4)$
all equal one. 
Then $w_1 = w_2$ or $w_3 = w_4$ holds.
\end{lemma}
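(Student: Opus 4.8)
The plan is to prove the contrapositive: assuming $w_1 \neq w_2$, I would deduce $w_3 = w_4$. The guiding idea is that the four determinant conditions become linear constraints once one uses that $\det(\cdot,\cdot)$ is bilinear and alternating, so that differences of the hypotheses produce vanishing determinants.

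First I would linearize. Subtracting $\det(w_2, w_3) = 1$ from $\det(w_1, w_3) = 1$ yields $\det(w_1 - w_2, w_3) = 0$, and the same subtraction with $w_4$ in place of $w_3$ gives $\det(w_1 - w_2, w_4) = 0$. Since $w_1 - w_2 \neq 0$, both $w_3$ and $w_4$ are proportional to $w_1 - w_2$ in $\mathbb{Q}^2$; moreover neither $w_3$ nor $w_4$ vanishes, as each pairs with $w_1$ to determinant $1$. Hence $w_3$ and $w_4$ lie on a single line through the origin, and as integer vectors they are integer multiples $w_3 = a w$, $w_4 = b w$ of a common primitive vector $w \in \mathbb{Z}^2$, with $a, b \in \mathbb{Z}$.

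Next I would feed this back into the two remaining hypotheses. Writing $d := \det(w_1, w) \in \mathbb{Z}$, bilinearity gives $1 = \det(w_1, w_3) = a d$ and $1 = \det(w_1, w_4) = b d$. The integer equations $a d = b d = 1$ force $d = \pm 1$ and $a = b = d$, whence $w_3 = a w = b w = w_4$, as desired.

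I do not expect a genuine obstacle here, since the lemma is elementary. The only step requiring a moment of care is passing from ``$w_3$ and $w_4$ are rationally proportional'' to ``$w_3$ and $w_4$ are common integer multiples of a primitive $w$'', which rests on the standard fact that the integer points on a rational line through the origin are exactly the multiples of its primitive generator; after that, the divisibility deduction $a d = b d = 1 \Rightarrow a = b$ is immediate.
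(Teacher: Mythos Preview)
Your proof is correct. The paper states this lemma without proof, treating it as an elementary fact, so there is nothing to compare against; your argument via bilinearity of the determinant---reducing to $\det(w_1-w_2,w_3)=\det(w_1-w_2,w_4)=0$ and then using primitivity to force $a=b$---is exactly the kind of short verification the authors presumably had in mind.
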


\begin{proof}[{Proof of Proposition~\ref{prop:deginrelint-collaps}}]
The assumption $\mu \not\in \varrho_i$
implies $\varrho_i \in \Lambda(R)$ for 
$i = 1, \ldots, 7$,
see Remark~\ref{rem:barXfacecrit}~(ii). 
Proposition~\ref{prop:hypmov} gives
$\mu \in \cone(w_3, w_5)$.
The latter cone is the union of
$\cone(w_3, w_4)$ and $\cone(w_4, w_5)$;
both are GIT-cones, one of them 
is two-dimensional and hosts $\mu$ in 
its relative interior. 
A suitable admissible coordinate
change yields $\mu \in \cone(w_4, w_5)^\circ$.

First we show that if $w_i \in \varrho_j$ 
holds for some $1 \le i < j \le 4$, 
then two of $w_5,w_6,w_7$ coincide.
Consider the case $w_5, w_6 \in \varrho_5$. 
By assumption $X(\lambda)$ is locally 
factorial for $\lambda = \cone(w_4,w_5)$.
Thus, we can apply 
Lemma~\ref{lem:threegenerate} 
to $w_i, w_j, w_5$ and also to $w_i, w_j, w_6$
and obtain that each of the triples 
generates~$K$ as a group.
Lemma~\ref{lem:2on1ray} yields that $w_5$ and~$w_6$ 
are primitive and hence, lying on a common
ray, coincide.
Now, assume $w_6 \not\in \varrho_5$.
Then we consider $X = X(\lambda)$ for 
$\lambda = \cone(w_5,w_6)$.
Using Lemma~\ref{lem:threegenerate}
as before, see that $w_i, w_j, w_6$ as well as 
$w_i, w_j, w_7$ generate $K$ as a group.
For the primitive generator~$w$ of 
$\varrho_i = \varrho_j$, we infer 
$\det(w,w_6) = 1$ and $\det(w,w_7) = 1$ 
from Lemma~\ref{lem:2on1ray}.
Moreover, $\gamma_{5,6}$ and $\gamma_{5,7}$ 
are $X$-faces due to 
Remark~\ref{rem:barXfacecrit}~(ii).
Thus, Proposition~\ref{prop:locprops}~(ii)
yields $\det(w_5,w_6) = 1$ and 
$\det(w_5,w_7) = 1$.
Lemma~\ref{lem:4det} yields $w_6 = w_7$.

We conclude the proof by 
showing that at least 
two of $w_1, \ldots, w_4$ coincide.
Consider the case $w_2 \in \varrho_3$. 
Then, by the first step, there are $5 \leq i < j \leq 7$ 
with $w_i = w_j$. 
Taking $X(\lambda)$ for $\lambda = \cone(w_4,w_5)$ 
and applying Lemma~\ref{lem:threegenerate} 
to $w_2, w_i, w_j$ as well as to $w_3, w_i, w_j$,
we obtain that each of these triples generates $K$. 
Because of $w_i = w_j$, we directly see that~$w_2$ and~$w_3$,
each being part of a $\ZZ$-basis, are primitive and 
hence coincide.
We are left with the case that 
$\lambda' = \cone(w_2,w_3)$ 
is of dimension two.
By assumption, the variety $X'$ defined by $\lambda'$ 
is locally factorial.
Moreover, Remark~\ref{rem:barXfacecrit}~(ii) 
provides us with the $X'$-faces 
$\gamma_{1,3},\gamma_{2,3},\gamma_{1,4}$ and $\gamma_{2,4}$.
By Proposition~\ref{prop:locprops}~(ii), all corresponding 
determinants $\det(w_k,w_m)$ equal one.
Lemma~\ref{lem:4det} shows that at least two of 
$w_1, \ldots, w_4$ coincide.
\end{proof}

We are ready to enter Part~I of the proof
of Theorem~\ref{thm:candidates}.
The task is to work out further the degree 
constellations left by 
Proposition~\ref{prop:deginrelint-collaps}.
This leads to major multistage case distinctions.
We demonstrate how to get through 
for two of the constellations of 
Proposition~\ref{prop:deginrelint-collaps},
chosen in a manner that basically 
all the necessary arguments of 
Part~I of the proof show up. 
For the full elaboration of all cases we 
refer to~\cite{Ma}.

\begin{proof}[Proof of Theorem~\ref{thm:candidates}, Part I]
This part of the proof treats
the case that $\mu = \deg(g)$ doesn't lie 
on any of the rays $\varrho_i = \cone(w_i)$.
In particular, by Remark~\ref{rem:barXfacecrit}~(ii),
all rays $\varrho_1,\ldots,\varrho_7$ belong to
the GIT-fan $\Lambda(R)$.
By Remark~\ref{rem:smoothfanoprops},
every two-dimensional $\eta \in \Lambda(R)$
with $\eta^\circ \subseteq \Mov(R)^\circ$
produces a smooth variety $X(\eta)$.
Thus, we can apply 
Proposition~\ref{prop:deginrelint-collaps},
which leaves us with $\mu \in \cone(w_4, w_5)^\circ$
and the six possible constellations for 
$w_1, \ldots, w_7$ given there.
Again by Remark~\ref{rem:smoothfanoprops},
the divisor class group of $X$ is torsion
free, that means that we have $K = \ZZ^2$.

\medskip
\noindent
\emph{Constellation~\ref{prop:deginrelint-collaps}~(i)}.
We have $w_1=w_2$ and $w_5 = w_6$.
Lemma~\ref{lem:threegenerate} applied to $w_1, w_2, w_5$
shows that $w_1, w_5$ form a basis of $\mathbb{Z}^2$.
Thus, a suitable admissible coordinate change 
gives $w_1 = (1,0)$ and $w_6 = (0,1)$.
Applying Lemma~\ref{lem:threegenerate} also to
$w_1, w_2, w_7$ and 
$w_i, w_5, w_6$ where $i = 1, \dots, 4$ yields 
the first coordinate of $w_1, \ldots, w_4$ 
and the second coordinate of $w_7$ 
equal one. 
Thus, the degree matrix has the form
\[
Q 
\ = \
[w_1, \dotsc, w_7 ] 
\ = \
\left[
\begin{array}{rrrrrrr}
1 & 1 &   1 &   1 & 0 & 0 & -a_7 
\\
0 & 0 & b_3 & b_4 & 1 & 1 &    1 
\end{array}
\right],
\qquad
b_3,b_4,a_7 \in \ZZ_{\ge 0}.
\]
We determine the possible values of $b_3$ and $b_4$.
If $b_3 > 0$ holds, then $\eta = \cone(w_2, w_3)$
is two-dimensional and satisfies
$\eta^\circ \subseteq \Mov(R)^\circ$. 
Because of $\mu \in \cone(w_4, w_5)^\circ$, 
none of the monomials of $g$ is of the 
form $T_1^{l_1}T_j^{l_j}$ with $j = 3,4$.
Lemma~\ref{lem:twofaces} applied to
$X(\eta)$ gives $b_j = \det(w_1, w_j) = 1$
for $j = 3,4$.  
If $b_3 = 0$ and $b_4 > 0$ hold, we argue similarly
with $\eta = \cone(w_2, w_4)$ and obtain $b_4 = 1$.
Altogether, we arrive at the three cases
$$
\begin{array}{c}
\begin{tikzpicture}[scale=0.6]
\path[fill=gray!60!] (0,0)--(2,0)--(0,2)--(0,0);
\draw[thick] (0,0)--(2,0);
\draw[thick] (0,0)--(0,2);
\draw[thick] (0,0)--(-1.75,1.75);
\path[fill, color=black] (1,0) circle (0.5ex);
\path[fill, color=black] (0,1) circle (0.5ex);
\path[fill, color=black] (-1,1) circle (0.5ex);
\end{tikzpicture}   
\\[1em]
\text{\ref{prop:deginrelint-collaps}~(i-a): $b_3 = b_4 = 0$,}
\end{array}
\quad
\begin{array}{c}
\begin{tikzpicture}[scale=0.6]
\path[fill=gray!60!] (0,0)--(2,0)--(1.75,1.75)--(0,2)--(0,0);
\draw[thick] (0,0)--(2,0);
\draw[thick] (0,0)--(0,2);
\draw[thick] (0,0)--(-1.75,1.75);
\draw[thick] (0,0)--(1.75,1.75);
\path[fill, color=black] (1,0) circle (0.5ex);
\path[fill, color=black] (0,1) circle (0.5ex);
\path[fill, color=black] (-1,1) circle (0.5ex);
\path[fill, color=black] (1,1) circle (0.5ex);
\end{tikzpicture}   
\\[1em]
\text{\ref{prop:deginrelint-collaps}~(i-b): $b_3 = 0$, $b_4 = 1$,}
\end{array}
\quad
\begin{array}{c}
\begin{tikzpicture}[scale=0.6]
\path[fill=gray!60!] (0,0)--(2,0)--(1.75,1.75)--(0,2)--(0,0);
\draw[thick] (0,0)--(2,0);
\draw[thick] (0,0)--(0,2);
\draw[thick] (0,0)--(-1.75,1.75);
\draw[thick] (0,0)--(1.75,1.75);
\path[fill, color=black] (1,0) circle (0.5ex);
\path[fill, color=black] (0,1) circle (0.5ex);
\path[fill, color=black] (-1,1) circle (0.5ex);
\path[fill, color=black] (1,1) circle (0.5ex);
\end{tikzpicture}   
\\[1em]
\text{\ref{prop:deginrelint-collaps}~(i-c): $b_3 = b_4 = 1$.}
\end{array}
$$
\emph{Case~\ref{prop:deginrelint-collaps}~(i-a)}. 
Here, the semiample cone $\lambda$ of $X = X(\lambda)$
must be the positive orthant. 
Thus, $X$ being Fano just means that both 
coordinates of the anticanonical class
$-\KKK_X \in K = \ZZ^2$ are strictly positive. 
According to Proposition~\ref{prop:anticanclass},
we have 
$$
-\KKK_X 
\ = \ 
(4 - a_7 - \mu_1, \, 3 - \mu_2).
$$
We conclude $1 \le \mu_2 \le 2$
and $1 \le \mu_1 < 4 - a_7$
which implies in particular 
$0 \le a_7 \le 2$. 
Thus, the weights $w_1,\ldots,w_7$ and 
the degree $\mu$ must be as in 
Theorem~\ref{thm:candidates}, 
Numbers~\ref{cand:deginrelint-ia-1}
to~\ref{cand:deginrelint-ia-12}.
We exemplarily verify the candidate Number~12;
the others are settled analogously.
We have to deal with the specifying data 
\[
Q \ = \
[w_1, \dotsc ,w_7]
\ = \ 
\left[
\begin{array}{ccccccc}
1 & 1 & 1 & 1 & 0 & 0 & -2
\\
0 & 0 & 0 & 0 & 1 & 1 & 1   
\end{array}
\right],
\qquad\qquad
\mu \ = \ (1,2).
\]
We run Construction~\ref{constr:hypersurf}
with $\tau \in \Lambda$ such that 
$-\mathcal{K} = (1,1) \in \tau^\circ$ 
and show that the result is a smooth
general hypersurface Cox ring $R_g$.
First, one directly checks that the
convex hull over the $\nu \in \ZZ^7$ 
with $Q(\nu) = \mu$ is Dolgachev polytope.
Thus, Proposition~\ref{prop:ufdcrit}~(i) 
delivers a non-empty open set $U \subseteq S_\mu$ 
such that $R_g$ is factorial 
for all $g \in U$.
Since $\mu \neq w_i$ holds for all $i$,
Remark~\ref{rem:genCRirredundant} ensures
that $T_1, \dotsc, T_7$ are a minimal system 
of generators for $R_g$, whenever
$g \in U$.
For $i \neq 5,6$ the degree $w_i$ of $T_i$ 
is indecomposable in the monoid $\Eff(R_g) \cap K$. 
We conclude that~$T_i$ is irreducible 
and thus prime in $R_g$, whenever
$g \in U$.
To see primality of $T_5$ and~$T_6$,
we use Proposition~\ref{prop:varclassprime},
where we can take $T_1 T_i^2 - T_2^5 T_7^2$
for $i = 6,5$
as the required $\mu$-homogeneous 
polynomial in both cases.
The ambient toric variety $Z$ is smooth  
due to Proposition~\ref{prop:locprops}~(iv).
Thus, also $Z_\mu$ is smooth.
Because of $\mu \in \tau^\circ$,
Corollary~\ref{cor:rk2bertini}
applies and, suitably shrinking $U$, 
we achieve that $X_g$ 
is smooth for all $g \in U$.

\medskip
\noindent
\emph{Case~\ref{prop:deginrelint-collaps}~(i-b)}. 
Here, either $\lambda = \cone(w_3, w_4)$  
or $\lambda = \cone(w_4, w_5)$ holds.
In any case, the anticanonical class is given as
$$
-\KKK_X 
\ = \ 
(4 - a_7 - \mu_1, \, 4 - \mu_2).
$$
First assume that $\lambda = \cone(w_3, w_4)$
holds.
Then, $X$ being Fano, we have 
$-\KKK_X \in \lambda^\circ$.
The latter is equivalent to the inequalities   
$$
4 - \mu_2 \ > \ 0, 
\qquad\qquad
\mu_2 - \mu_1 -a_7 \ > \ 0.
$$
Using $\mu \in \cone(w_4, w_5)^\circ$,
we conclude $1 \le \mu_1 < \mu_2 \le 3$
and $0 \le a_7 \le 1$.
Thus, we end up with 
$$
a_7 = 0 \text{ and } \mu = (1,2), (1,3), (2,3),
\qquad
a_7 = 1 \text{ and } \mu = (1,3).
$$
Note that in all cases, $\gamma_{1,2,3,4}$
is an $X$-face according to 
Remark~\ref{rem:barXfacecrit}~(ii).
Since $X$ is quasismooth,
Proposition~\ref{prop:qsmooth2degrees}
yields
$$ 
\mu 
\ \in  \
Q(\gamma_{1,2,3,4}) \, \cup \, w_7 + Q(\gamma_{1,2,3,4}) .
$$
This excludes $a_7 = 0$ and $\mu = (1,3)$.
The remaining three cases are 
Numbers~\ref{cand:deginrelint-ib-1}
to~\ref{cand:deginrelint-ib-4}
of Theorem~\ref{thm:candidates}.
All these candidates can be verified.
Indeed, take $\tau = \cone(w_3, w_4)$
for all three cases and observe
$-\KKK \in \cone(w_3, w_4)^\circ$.
As in Case~\ref{prop:deginrelint-collaps}~(i-a),
we find a non-empty open subset $U$ of 
polynomials $g \in S_\mu$
such that $R_g$ admits unique factorization,
see that $T_1, \dotsc, T_7$ define pairwise 
non-associated primes in~$R_g$
and observe that $Z$ and thus also $Z_\mu$ 
are smooth.
For smoothness of $X_g$, it suffices to 
show that $\hat{X}_g$ is smooth; 
see~Proposition~\ref{prop:Xgsmooth}.
By Proposition~\ref{prop:Xgqsmooth},
it suffices to find some $g \in S_\mu$ 
such that $\grad(g)$ has no zeroes 
in $\hat{Z}$, then shrinking $U$ suitably
yields that $\hat{X}_g$ is smooth for all 
$g \in U$.
We just chose a random $g$ of degree $\mu$ and 
verified this using~\cite{MDSpackage}. 
For instance, for $a_7 = 0$ and $\mu = (1,2)$,
that means Number~\ref{cand:deginrelint-ib-1},
the following $g$ does the job:
\begin{align*}
8T_{1}T_{5}^2 
& + 7T_{1}T_{5}T_{6} + 7T_{1}T_{5}T_{7}
+ 6T_{1}T_{6}^2 + 4T_{1}T_{6}T_{7} + T_{1}T_{7}^2
+ 7T_{2}T_{5}^2 + 7T_{2}T_{5}T_{6}
\\
& + 3T_{2}T_{5}T_{7} + 8T_{2}T_{6}^2 + 5T_{2}T_{6}T_{7} + 8T_{2}T_{7}^2  
+ 5T_{3}T_{5}^2 + 4T_{3}T_{5}T_{6} + 9T_{3}T_{5}T_{7} 
\\
&  + 2T_{3}T_{6}^2 + 9T_{3}T_{6}T_{7} + T_{3}T_{7}^2
+ 8T_{4}T_{5} + 3T_{4}T_{6} + 6T_{4}T_{7}.
\end{align*}
Now, assume that $\lambda = \cone(w_4, w_5)$ holds.
The condition that $X = X(\lambda)$ is Fano means 
$-\KKK_X \in \lambda^\circ$, which
translates into the inequalities 
$0 < 4 - a_7 - \mu_1 < 4 - \mu_2$.
Moreover, $\mu \in \lambda^\circ$ implies
$\mu_1 < \mu_2$ and we conclude
$$ 
1 
\ \le \ 
\mu_1 
\ < \ 
\mu_2 
\ < \ 
\mu_1 + a_7 
\ \le \ 
3.
$$
This is only possible for $a_7 = 2$ and $\mu = (1,2)$. 
Then we have $w_4 = (1,1)$ and $w_7 = (-2,1)$.
In particular, $g$ admits no monomial of the form 
$T_4^{l_4} T_7^{l_7}$.
Lemma~\ref{lem:twofaces} tells us that $w_4$ and $w_7$ 
generate $K = \mathbb{Z}^2$ as a group.
A contradiction.

\medskip
\noindent
\emph{Case~\ref{prop:deginrelint-collaps}~(i-c)}.
Applying Remark~\ref{rem:smoothfanoprops}
and Lemma~\ref{lem:threegenerate} to
$X(\eta)$ with $\eta = \cone(w_4, w_5)$
and $w_3, w_4, w_7$ yields $\det(w_4, w_7) = 1$. 
From this we infer $a_7 = 0$.
Thus, either $\lambda = \cone(w_2, w_3)$
or $\lambda = \cone(w_4, w_5)$ holds.
In any case, the anticanonical class is
$$
-\KKK_X \ = \ (4 - \mu_1, \, 5 - \mu_2).
$$
Assume $\lambda = \cone(w_2, w_3)$. 
Then the Fano condition $-\KKK_X \in \lambda^\circ$ 
implies $\mu_1 + 1 < \mu_2$. 
Remark~\ref{rem:barXfacecrit}~(ii) says that 
$\gamma_{1,2,3,4}$ is an $X$-face.
As before, Proposition~\ref{prop:qsmooth2degrees} 
gives
$$ 
\mu 
\ \in  \
Q(\gamma_{1,2,3,4}) \, \cup \, w_7 + Q(\gamma_{1,2,3,4}) .
$$
We conclude $\mu_1 +1 \ge \mu_2$. A contradiction.
Now, assume $\lambda = \cone(w_4, w_5)$.
Then $-\KKK_X \in \lambda^\circ$ 
yields $\mu_1  \ge \mu_2$.
But we have $\mu \in \cone(w_4, w_5)^\circ$,
hence $\mu_1 < \mu_2$.
A contradiction.

\bigskip
\noindent
\emph{Constellation~\ref{prop:deginrelint-collaps}~(ii)}.
We have $w_1 = w_2$ and $w_6 = w_7$.
Lemma~\ref{lem:threegenerate} applied to $w_1, w_6, w_7$
shows that $w_1, w_7$ generate $\mathbb{Z}^2$.
Hence, a suitable admissible coordinate change 
yields $w_1 = w_2 = (1,0)$ and $w_6 = w_7 = (0,1)$.
Applying Lemma~\ref{lem:threegenerate} to $w_3, w_6, w_7$
and $w_4, w_6, w_7$, we obtain that the first 
coordinates of $w_3$ and $w_4$ both equal one. 
Thus, the degree matrix has the form
\[
Q \ = \
[w_1, \dotsc ,w_7]
\ = \ 
\left[
\begin{array}{ccccccc}
1 & 1 &   1 &   1 & a_5 & 0 & 0
\\
0 & 0 & b_3 & b_4 & 1 & 1 & 1 
\end{array}
\right],
\qquad\qquad 
a_5, b_3, b_4 \in \ZZ_{\geq 0}.
\]
By assumption $w_4$ and $w_5$ don't lie on a common ray. 
Consequently, $b_4 = 0$ or $a_5 = 0$ holds. 
If $a_5 = 0$ holds, then we are in 
Constellation~\ref{prop:deginrelint-collaps}~(i)
just treated. 
So, assume $a_5 > 0$. Then $b_3 = b_4 = 0$ holds. 
Taking $X(\eta)$ for $\eta = \cone(w_5, w_6)$ and 
applying Lemma~\ref{lem:twofaces} to $w_5, w_6$
yields $a_5 = 1$.
We arrive at the degree matrix
\[
Q \ = \
[w_1, \dotsc ,w_7]
\ = \ 
\left[
\begin{array}{ccccccc}
1 & 1 & 1 & 1 & 1 & 0 & 0
\\
0 & 0 & 0 & 0 & 1 & 1 & 1
\end{array}
\right]. 
\]
Observe that either $\lambda = \cone(w_4, w_5)$
or $\lambda = \cone(w_5, w_6)$ holds. In any case, the
anticanonical class of $X = X(\lambda)$ is given as
$$
-\KKK_X \ = \ (5 - \mu_1, \, 3 - \mu_2).
$$
First, assume $\lambda = \cone(w_4, w_5)$. 
Then $X$ being Fano means
$0 < 3 - \mu_2 < 5 - \mu_1$.
We conclude $\mu_2 \le 2$ and $\mu_1 \le \mu_2+1$.
Moreover, $\mu \in \cone(w_4, w_5)^\circ$ gives  
$0 < \mu_2 < \mu_1$. 
Thus, we have $\mu_1 = \mu_2 + 1$ and arrive at 
the possibilities $\mu = (2, 1), (3,2)$,
which are Numbers~\ref{cand:deginrelint-ii-1}
and~\ref{cand:deginrelint-ii-2} in Theorem~\ref{thm:candidates}.
Showing that these constellations indeed define smooth
Fano varieties runs exactly as in 
Case~\ref{prop:deginrelint-collaps}~(i-a).
Now, let $\lambda = \cone(w_5, w_6)$. 
Then~$X$ being Fano gives $0 < 5-\mu_1<3-\mu_2$.
We conclude $\mu = (4, 1)$.
Remark~\ref{rem:barXfacecrit}~(ii) provides 
us with the $X$-face $\gamma_{5,6,7}$.
Proposition~\ref{prop:qsmooth2degrees} says
that $\mu$ should lie in $Q(\gamma_{5,6,7})$ or in
$w_1 + Q(\gamma_{5,6,7})$. A contradiction.
\end{proof}

We treat Case~\ref{rem:majorconsts}~IIa
that means that the degree of the relation 
lies in the interior of the effective cone,
is proportional to some Cox ring generator degree 
and $\varrho_1 = \varrho_2$ as well as 
$\varrho_{r-1} = \varrho_r$ hold.

\begin{lemma} 
\label{lem:combminray}
In Setting~\ref{rem:effrho2} assume that
$\Mov(R) = \Eff(R)$ and $\mu \in \Eff(R)^\circ$
hold.
Let $\Omega$ denote the set of two-dimensional 
$\lambda \in \Lambda(R)$ with
$\lambda^\circ \subseteq \Mov(R)^\circ$.
\begin{enumerate}
\item 
If $X(\lambda)$ is locally factorial 
for some $\lambda \in \Omega$, 
then $\Eff(R)$ is a regular cone and 
every $w_i$ on the boundary of $\Eff(R)$ 
is primitive.

\item 
If $X(\lambda)$ is locally factorial for 
each $\lambda \in \Omega$, 
then, for any $w_i \in \Eff(R)^\circ$, 
we have $w_i = w_1 + w_r$ or $g$ has a 
monomial of the form $T_i^{l_i}$.
\end{enumerate}
\end{lemma}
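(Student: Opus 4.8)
The plan is to treat the two statements separately; in each case I would extract from local factoriality that suitable triples of weights generate $K$, and then conclude using the elementary lattice Lemma~\ref{lem:2on1ray} together with the face criteria of the previous sections. A preliminary observation I would record first: since the grading is almost free, Remark~\ref{rem:EffRMovR}~(ii) gives $\Mov(R)=\bigcap_i\cone(w_j;\, j\neq i)$, so the hypothesis $\Mov(R)=\Eff(R)=\cone(w_1,w_r)$ forces each extreme ray $\varrho_1,\varrho_r$ of $\Eff(R)$ to carry at least two of the weights; by the counter-clockwise numbering we then have $w_1,w_2\in\varrho_1$ and $w_{r-1},w_r\in\varrho_r$.

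For part~(i), fix the locally factorial $X(\lambda)$, $\lambda\in\Omega$. To see that every $w_j$ on $\varrho_r$ is primitive, I would apply Lemma~\ref{lem:threegenerate}~(i) to the triple $w_1,w_2,w_j$: here $w_1,w_2\in\lambda^-$ and $w_j\in\lambda^+$, and $g$ has no monomial $T_j^{l_j}$ because its degree $l_jw_j$ lies on $\varrho_r$, whereas $\mu\in\Eff(R)^\circ$. Thus $w_1,w_2,w_j$ generate $K$, and since $w_1,w_2$ are collinear multiples of the primitive generator $u_1$ of $\varrho_1$, Lemma~\ref{lem:2on1ray} yields that $(w_j,u_1)$ is a lattice basis and $w_j$ is primitive. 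The symmetric application of Lemma~\ref{lem:threegenerate}~(ii) to $w_i,w_{r-1},w_r$ shows that every $w_i$ on $\varrho_1$ is primitive. In particular $w_1=u_1$ and $w_r=u_r$ are primitive with $\det(u_1,u_r)=1$ (the sign being fixed by the counter-clockwise convention), so $\Eff(R)=\cone(u_1,u_r)$ is regular.

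For part~(ii), by part~(i) the weights $w_1,w_r$ form a lattice basis, so I would write $w_i=\alpha w_1+\beta w_r$ with $\alpha,\beta\in\ZZ_{\geq 1}$ (positivity from $w_i\in\Eff(R)^\circ$) and $\det(w_1,w_i)=\beta$, $\det(w_i,w_r)=\alpha$. If $g$ has a monomial $T_i^{l_i}$ we are done, so assume it does not. Then $\gamma_{\{i\}}$ is an $\bar X$-face by Remark~\ref{rem:barXfacecrit}~(i), hence $\varrho_i$ is a ray of $\Lambda(R)$ sitting inside $\Eff(R)^\circ$; being an interior wall it is the common facet of two chambers $\lambda$ (below) and $\lambda'$ (above), both lying in $\Omega$ and hence locally factorial. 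It remains to prove $\alpha=\beta=1$.

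The crux, and the step I expect to be the main obstacle, is ruling out $\alpha\geq 2$ (the case $\beta\geq 2$ being symmetric). Assuming $\alpha\geq 2$, the pairs $(i,r)$ and $(i,r-1)$ both satisfy $\lambda'\subseteq\cone(w_i,w_r)=\cone(w_i,w_{r-1})$ with non-unimodular determinant, so Lemma~\ref{lem:twofaces} produces monomials $T_i^{p}T_r^{l'}$ and $T_i^{p}T_{r-1}^{l'}$ of $g$; a degree comparison in the basis $(w_1,w_r)$ shows they share the same exponents. Since every monomial of $g$ supported on $J=\{i,r-1,r\}$ has $T_i$-exponent exactly $p$, the restriction $g_J$ factors as $T_i^{p}h(T_{r-1},T_r)$, where $h$ carries the two distinct terms $T_{r-1}^{l'},T_r^{l'}$; hence $h$, and thus $g_J$, is not a monomial, so $\gamma_J$ is an $\bar X$-face and indeed an $X$-face of $\lambda'$. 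Local factoriality and Proposition~\ref{prop:locprops}~(ii) then force $w_i,u_r$ to generate $K$, i.e. $\alpha=1$, a contradiction. The subtle point is precisely that a single mixed monomial would leave $g_J$ a monomial and $\gamma_J$ no face; it is the existence of a \emph{second} weight on $\varrho_r$, guaranteed by $\Mov(R)=\Eff(R)$, that supplies the second term of $h$ and makes the $X$-face argument go through. The mirror argument, using $w_1,w_2\in\varrho_1$ and the face $\{1,2,i\}$ of $\lambda$, gives $\beta=1$, whence $w_i=w_1+w_r$.
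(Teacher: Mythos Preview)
Your argument is correct in both parts. Part~(i) is exactly the paper's proof.

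For part~(ii), however, you take a noticeably longer route than the paper. The paper simply picks chambers $\lambda_1\subseteq\cone(w_1,w_i)$ and $\lambda_2\subseteq\cone(w_i,w_r)$ in $\Omega$ and applies Lemma~\ref{lem:threegenerate} directly: with $X(\lambda_1)$ the triple $w_1,w_2,w_i$ generates $K$ (since $g$ has no monomial $T_i^{l_i}$), and with $X(\lambda_2)$ the triple $w_i,w_{r-1},w_r$ generates $K$; as $w_1=w_2$ and $w_{r-1}=w_r$, each conclusion amounts to a single determinantal condition, forcing $w_i=(1,1)=w_1+w_r$. Your detour through Lemma~\ref{lem:twofaces} and the explicit three--element $X$-face $\gamma_{\{i,r-1,r\}}$ works, but it is essentially a reproof of Lemma~\ref{lem:threegenerate} in this particular situation (compare the proof of that lemma). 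Since you already invoke Lemma~\ref{lem:threegenerate} in part~(i), calling it once more would collapse your part~(ii) to two lines. Nothing is gained by the longer path; note also that your preliminary claim that $\varrho_i$ is a GIT ray, while correct, is not actually needed---any chamber inside $\cone(w_i,w_r)$ (resp.\ $\cone(w_1,w_i)$) suffices.
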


\begin{proof}
We show~(i).
Let $w_i \in \varrho_r$. 
Due to $\mu \in \Eff(R)^\circ$, 
the relation $g$ has no monomial of the 
form $T_i^{l_i}$.
Thus, Lemmas~\ref{lem:threegenerate} 
and~\ref{lem:2on1ray} applied to the 
triple $w_1,w_2,w_i$ show that $w_i$ is 
primitive.
Analogously, we see that any 
$w_i \in \varrho_1$ is primitive.
In particular, we have $w_1=w_2$.
Thus, applying Lemma~\ref{lem:threegenerate} 
to $w_1,w_2,w_r$, we 
obtain that $\Eff(R)$ is a regular cone.

We turn to~(ii).
By~(i), we may assume $w_1 = w_2 = (1,0)$
and $w_{r-1} = w_r = (0,1)$.
Consider $w_i \in \Eff(R)^\circ$ 
such that $T_i^{l_i}$ is not
a monomial of $g$.
Then we find GIT-cones 
$\lambda_1 \subseteq \cone(w_1, w_i)$
and $\lambda_2 \subseteq \cone(w_i, w_r)$
defining locally factorial 
varieties~$X(\lambda_1)$ and $X(\lambda_2)$
respectively.
Lemma~\ref{lem:threegenerate}, applied
to $w_1,w_2,w_i$ together with~$X(\lambda_1)$ 
and to $w_i, w_{r-1}, w_r$ together with 
$X(\lambda_2)$
shows $w_i = (1,1) = w_1 + w_r$. 
\end{proof}

\begin{lemma} 
\label{lem:raypower}
In Setting~\ref{rem:effrho2},
assume that $X = X(\lambda)$ is locally factorial
and $R_g$ a spread hypersurface Cox ring.
\begin{enumerate}
\item
If $w_i$ lies on the ray through $\mu$,
then $g$ has a monomial of the form $T_i^{l_i}$
where $l_i \ge 2$.
\item
If $w_i, w_j$, where $i \ne j$, 
lie on the ray through $\mu$, 
then $\varrho_i = \varrho_j \in \Lambda(R_g)$ 
holds.
\end{enumerate}
\end{lemma}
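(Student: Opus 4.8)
The plan is to read both claims through the dictionary between pure‑power monomials of $g$ and the face/GIT‑ray structure supplied by Remark~\ref{rem:barXfacecrit} and Proposition~\ref{prop:degincone}. Since $s=1$ here, the cone $\QQ_{\ge 0}e_i$ is an $\bar X$‑face exactly when $g_{\{i\}}=0$, i.e. when $g$ carries no pure power $T_i^{l_i}$; and by Proposition~\ref{prop:degincone} any $w_i$ sitting in the relative interior of a two–dimensional GIT‑cone forces such a pure power to occur in $g$. Both parts are instances of exploiting this correspondence, but in opposite directions.

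For part~(ii) I would argue by contradiction, and I expect this step to go through cleanly using neither spread nor local factoriality. Suppose $\varrho_i=\varrho_j$ is \emph{not} a ray of $\Lambda(R_g)$. As $w_i\neq 0$, it then lies in the relative interior of a two–dimensional GIT‑cone, so Proposition~\ref{prop:degincone}~(iii), available because $s=1$, tells us that every $\deg(T_k)$ with $k\neq i$ spans a ray of $\Lambda(R_g)$. Applying this to $k=j$ shows that $\varrho_j$ is a ray of the fan; but $w_i$ and $w_j$ are collinear, so $w_i\in\varrho_j$ lies on a one–dimensional cone of $\Lambda(R_g)$, contradicting that $w_i$ is interior to a two–dimensional cone. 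Hence $\varrho_i=\varrho_j$ is a ray of $\Lambda(R_g)$, which is the assertion.

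For part~(i) the mechanism is different: here I must \emph{produce} the pure power rather than forbid it, and spread is the essential tool. The first step is to pin down that $\mu$ is a positive integral multiple of $w_i$, which I would obtain from primitivity of $w_i$. Applying Lemma~\ref{lem:threegenerate} on the two GIT‑chambers bordering $\varrho_i$, as packaged in Lemma~\ref{lem:combminray}~(ii) (whose hypotheses $\Mov(R)=\Eff(R)$, $\mu\in\Eff(R)^\circ$ and local factoriality of the flanking quotients are in force in the situation at hand), yields that either $g$ already has a monomial $T_i^{l_i}$, in which case we are done, or $w_i=w_1+w_r$ is primitive. In the primitive case $\mu=l_iw_i$ for some $l_i\in\ZZ_{\ge 1}$, because $\mu$ is a lattice point on the ray spanned by the primitive vector $w_i$. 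The second step is the spread argument: the exponent vector $l_ie_i$ of $T_i^{l_i}$ is, by spreadness, a convex combination of the exponent vectors of the monomials of $g$; since all of these lie in $\ZZ_{\ge 0}^r$ and their combination is supported on the single coordinate $i$, every monomial occurring with positive weight is itself supported on $\{i\}$ and hence equals $T_i^{l_i}$, so $T_i^{l_i}$ is a monomial of $g$. Finally $l_i\ge 2$, since $R_g$ is a hypersurface Cox ring, so $T_1,\dots,T_r$ is a minimal generating system and Remark~\ref{rem:genCRirredundant} gives $\mu\neq w_i$, ruling out $l_i=1$, while $l_i=0$ is excluded by $\deg(T_i^{0})=0\neq\mu$.

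The hard part will be the integrality/primitivity step in~(i): without knowing that $\mu$ is an integral multiple of $w_i$, the target monomial $T_i^{l_i}$ need not even exist as a lattice point, and then spread has nothing to act on — indeed, if a second generator sits on the ray through $\mu$, as in~(ii), typically no pure power exists at all. Everything therefore hinges on forcing $w_i$ to be primitive, equivalently on ensuring that $w_i$ is the unique generator on the ray through $\mu$; this is precisely the dichotomy separating~(i) from~(ii), and I would make sure the ambient case assumptions guarantee this uniqueness before invoking the primitivity argument.
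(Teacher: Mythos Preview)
Your argument for~(ii) via Proposition~\ref{prop:degincone}~(iii) is correct and in fact cleaner than the paper's: the paper deduces~(ii) from~(i) by observing that once $g$ has both $T_i^{l_i}$ and $T_j^{l_j}$ as monomials, $\gamma_{i,j}$ is an $\bar X$-face (Remark~\ref{rem:barXfacecrit}~(i)), so its image $\varrho_i=\varrho_j$ is an orbit cone and hence a ray of $\Lambda(R_g)$. Your route avoids~(i) entirely and needs neither spread nor local factoriality.

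For~(i), however, there is a genuine gap. You obtain primitivity of $w_i$ from Lemma~\ref{lem:combminray}~(ii), but that lemma assumes $\Mov(R)=\Eff(R)$, $\mu\in\Eff(R)^\circ$, and local factoriality of \emph{every} chamber in $\Mov(R)^\circ$. None of these are hypotheses of Lemma~\ref{lem:raypower}, which only asks for local factoriality of the single given $X(\lambda)$; your parenthetical ``in force in the situation at hand'' imports context from Part~IIa rather than proving the lemma as stated. The paper gets primitivity more directly: assuming $g$ has no monomial $T_i^{l_i}$, one observes that $\gamma_{\{i\}}$ is an $\bar X$-face, so $\varrho_i\in\Lambda(R_g)$ and hence $w_i\in\lambda^-\cup\lambda^+$. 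Choosing $w_j$ on the extremal ray of $\Eff(R)$ lying on the \emph{other} side of $\lambda$, one has $g_{\{i,j\}}=0$ (any monomial $T_i^aT_j^b$ of degree $\mu\in\varrho_i$ forces $b=0$), so $\gamma_{i,j}$ is an $X$-face for the given $\lambda$. Now Proposition~\ref{prop:locprops}~(ii) applied to this single locally factorial $X(\lambda)$ gives that $w_i,w_j$ generate $\ZZ^2$, whence $w_i$ is primitive. Then your spread step finishes the argument exactly as you wrote it.

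Finally, your closing remark that ``if a second generator sits on the ray through $\mu$, typically no pure power exists at all'' is a misconception: part~(i) applies to \emph{each} $w_i$ on the ray independently, and the paper's primitivity argument goes through regardless of whether other generators share $\varrho_i$. Indeed, this is precisely what the paper exploits to prove~(ii).
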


\begin{proof}
We show~(i).
Suppose that $g$ has no monomial of the 
form~$T_i^{l_i}$ where $l_i \geq 2$.
As $R_g$ is a hypersurface Cox ring,
also $T_i$ is not a monomial of $g$.
Then, on one of the extremal rays of $\Eff(R)$,
we find a $w_j$ such that
$\gamma_{i,j}$ is a $X$-face;
see Remark~\ref{rem:barXfacecrit}~(i).
Proposition~\ref{prop:locprops}~(ii) yields 
that $w_i, w_j$ generate $\mathbb{Z}^2$
as a group.
In particular, $w_i$ is primitive.
Hence $\mu = k w_i$ holds for some
$k \in \mathbb{Z}_{\ge 1}$.
As $R_g$ is spread,
$T_i^k$ must be a monomial of $g$. 
In addition, we obtain $k \geq 2$.
A contradiction.

We prove~(ii).
Assertion~(i) just proven and 
Remark~\ref{rem:barXfacecrit}~(i) 
tell us that $\gamma_{i,j}$ is an 
$\bar X$-face.
Thus, being a ray, 
$Q(\gamma_{i,j}) = \varrho_i = \varrho_j$
belongs to the GIT-fan $\Lambda(R_g)$. 
\end{proof}

\begin{proof}[Proof of Theorem~\ref{thm:candidates}, Part IIa]
We deal with the specifying data of a
smooth general hypersurface Cox ring $R$ 
as in Remark~\ref{rem:majorconsts}~IIa
defining a smooth Fano fourfould $X = X(\lambda)$.
By Proposition~\ref{prop:hypmov}, 
the relation degree~$\mu$ lies on 
$\varrho_3$, $\varrho_4$ or $\varrho_5$.
We claim that we can't have 
$\varrho_3 = \varrho_4 = \varrho_5$.
Otherwise Lemma~\ref{lem:raypower} shows
$\eta = \cone(w_1, w_3) \in \Lambda(R)$.
Since $X(\eta)$ is smooth by Remark~\ref{rem:smoothfanoprops},
we may apply Lemma~\ref{lem:threegenerate} to
the triple $w_1, w_3, w_4$. According to
Lemma~\ref{lem:2on1ray} we obtain
$\det(w_1, v) = 1$ where $v$ denotes
the primitive generator of the ray $\varrho_3$.
Analogous arguments
yield $\det(v, w_7) = 1$. Using both determinantal equations
we conclude that $v$ and $w_1 + w_7$ are collinear. 
In particular $w_1 + w_7$ generates $\varrho_3 = \varrho_4 = \varrho_5$.
Lemma~\ref{lem:combminray}~(i) tells us $w_1 = w_2$
and $w_6 = w_7$.
As a result, Proposition~\ref{prop:anticanclass} gives
$-\KKK_X \in \varrho_3$.
Moreover, Lemma~\ref{lem:raypower}~(ii)
tells us $\varrho_3 \in \Lambda(R_g)$
and thus $\lambda = \varrho_3$,
which contradicts $\QQ$-factoriality, 
see Proposition~\ref{prop:locprops}~(i).
A suitable admissible coordinate change yields
$\mu \notin \varrho_5$ and we are left 
with the following three constellations:
\[
\begin{array}{c}
\begin{tikzpicture}[scale=0.6]
\path[fill=gray!60!] (0,0)--(2,0)--(1.75,1.75)--(0,2)--(0,0);
\draw[thick] (0,0)--(2,0);
\draw[thick] (0,0)--(0,2);
\draw[thick] (0,0)--(1.75,1.75);
\path[fill, color=black] (1,0) circle (0.5ex);
\path[fill, color=black] (0,1) circle (0.5ex);
\path[fill, color=black] (0.5,0.5) circle (0.5ex);
\path[fill, color=black] (1,1) circle (0.5ex);
\filldraw[fill=white, draw=black] (1.5,1.5) circle (0.5ex);
\end{tikzpicture}   
\\[1em]
\text{(i)}~\varrho_3 = \varrho_4,\, \mu \in \varrho_3
\end{array}
\quad
\begin{array}{c}
\begin{tikzpicture}[scale=0.6]
\path[fill=gray!60!] (0,0)--(2,0)--(2,0.8)--(1.75,1.75)--(0,2)--(0,0);
\draw[thick] (0,0)--(2,0);
\draw[thick] (0,0)--(0,2);
\draw[thick] (0,0)--(1.75,1.75);
\draw[thick] (0,0)--(2,0.8);
\path[fill, color=black] (1,0) circle (0.5ex);
\path[fill, color=black] (0,1) circle (0.5ex);
\path[fill, color=black] (1,0.4) circle (0.5ex);
\filldraw[fill=white, draw=black] (1.5,0.6) circle (0.5ex);
\path[fill, color=black] (1,1) circle (0.5ex);
\end{tikzpicture}   
\\[1em]
\text{(ii)}~\varrho_3 \neq \varrho_4,\, \mu \in \varrho_3
\end{array}
\quad
\begin{array}{c}
\begin{tikzpicture}[scale=0.6]
\path[fill=gray!60!] (0,0)--(2,0)--(1.75,1.75)--(0,2)--(0,0);
\draw[thick] (0,0)--(2,0);
\draw[thick] (0,0)--(0,2);
\draw[thick] (0,0)--(1.75,1.75);
\draw[thick] (0,0)--(0.8,1.87);
\path[fill, color=black] (1,0) circle (0.5ex);
\path[fill, color=black] (0,1) circle (0.5ex);
\path[fill, color=black] (1,1) circle (0.5ex);
\path[fill, color=black] (0.43,1) circle (0.5ex);
\filldraw[fill=white, draw=black] (0.73,1.7) circle (0.5ex);
\end{tikzpicture}   
\\[1em]
\text{(iii)}~\varrho_3 \neq \varrho_4,\, \mu \in \varrho_4
\end{array}
\]
By Lemma~\ref{lem:combminray}~(i), we can 
assume $w_1 = w_2 = (1,0)$ and $w_6 = w_7 = (0,1)$.
We show $w_5 = (0,1)$.
Otherwise, by Lemma~\ref{lem:combminray}~(ii),
we must have $w_5 = (1,1)$.
Consider $\lambda' = \cone(w_5, w_6)$.
Then $\mu \not\in \lambda'$ holds.
Remark~\ref{rem:barXfacecrit}~(ii)
tells us that $\gamma_{5,6}$ is an $X'$-face
and hence $\lambda'$ is a GIT-cone.
The associated variety $X'$ is smooth 
according to Remark~\ref{rem:smoothfanoprops}.
Thus, Proposition~\ref{prop:qsmooth2degrees}
yields $\mu \in w_i + \lambda'$ for some 
$1 \leq i \leq 7$.
By the geometry of the possible 
degree constellations,
only $i = 1,2$ come into consideration.
We conclude 
$\mu = (e + 1, e + f)$ with $e, f \in \mathbb{Z}_{\geq 0}$.
Positive orientation of $(\mu, w_5)$ gives $f = 0$.
Hence, $\mu$ is primitive.
By Lemma~\ref{lem:raypower}~(i), this contradicts
$R_g$ being a spread hypersurface ring.

\medskip 
\noindent
\emph{Constellation~(i)}.
Let $v = (v_1, v_2)$ be the primitive generator 
of $\varrho_3 = \varrho_4$.
Due to Lemma~\ref{lem:raypower}~(ii), we have 
$\varrho_3 \in \Lambda(R_g)$ and thus also  
$\lambda' = \cone(w_3, w_7)$ is a GIT-cone.
The associated variety $X'$ is smooth
by Remark~\ref{rem:smoothfanoprops}.
Applying Lemmas~\ref{lem:threegenerate}
and~\ref{lem:2on1ray} to the triple 
$w_3, w_4, w_7$ yields $v_1 = 1$ and 
that the first coordinates 
of $w_3$, $w_4$ are coprime. 
Arguing similarly with $w_1, w_3, w_4$ 
gives $v_2 = 1$. 
So, the degree matrix has the form
\[
Q \ = \
[w_1, \dotsc ,w_7]
\ = \ 
\left[
\begin{array}{ccccccc}
1 & 1 & a & b & 0 & 0 & 0 
\\
0 & 0 & a & b & 1 & 1 & 1
\end{array}
\right], 
\qquad
a, b \in \mathbb{Z}_{\geq 1},
\quad
\gcd(a,b) = 1.
\]
We may assume $a \leq b$.
By Lemma~\ref{lem:raypower}~(i),
the relation $g$ has 
monomials of the form~$T_3^{l_3}$ 
and $T_4^{l_4}$. 
Since $\gcd(a, b) = 1$ holds,
we conclude $\mu_1 = \mu_2 = dab$ 
with $d \in \mathbb{Z}_{\geq 1}$.
In particular $\mu_1 \ge ab$ holds.
By Proposition~\ref{prop:anticanclass},
the anticanonical class is given as
\[
-\KKK_X 
\ = \ 
(2 + a + b - \mu_1, \, 3 + a + b - \mu_2).
\]
From $X$ being Fano we deduce $-\KKK_X \in \Eff(R)^\circ$,
that means that each coordinate of $-\KKK_X$ 
is positive. Thus, we obtain
\[
2 + a + b \ > \ dab \ \ge \ ab.
\]
This implies $a = 1$ or $a = 2, b = 3$.
In the case $a = 1$,
using the inequality again leads to
$3 + (1-d)b > 0$ and we end up with
possibilities
\[
b = 1, \ d = 2, 3, 
\qquad\qquad 
b = 2, \ d = 2,
\]
leading to the specifying data of 
Numbers~\ref{cand:II-1-1} to~\ref{cand:II-1-3}
of Theorem~\ref{thm:candidates}.
The constellation $a = 2, b = 3$ immediately
implies $d = 1$, which gives the specifying data 
of Number~\ref{cand:II-1-4} of Theorem~\ref{thm:candidates}.

It remains to show that these specifying data
yield Fano smooth general hypersurface Cox rings.
We work in the setting of 
Construction~\ref{constr:hypersurf} 
and treat exemplarily 
Number~\ref{cand:II-1-4}.
From Remark~\ref{rem:genCRirredundant}
we infer that for all $g \in U_\mu$
the algebra $R_g$ has $T_1, \ldots, T_7$ 
as a minimal system of generators $R_g$.
Moreover, Proposition~\ref{prop:varclassprime} 
provides us with a non-empty open subset 
$U \subseteq S_\mu$ such that $T_1, \ldots, T_7$ 
define primes in $R_g$ for all $g \in U$,
provided we deliver for each $i$ 
a $\mu$-homogeneous prime polynomial 
not depending on $T_i$. Here they are:
\[
T_3^3 - T_4^2 \enspace \text{for} \enspace i = 1,2,5,6,7, 
\qquad\qquad
T_1^6 T_5^6 - T_2^6 T_6^5 T_7 \enspace \text{for} \enspace i = 3, 4.
\]
In Construction~\ref{constr:hypersurf}, 
consider $\lambda = \cone(w_3) \in \Lambda(R_g)$.
Then $\lambda^\circ \subseteq \Mov(S)^\circ$
holds and we have $\mu \in \lambda^\circ$. 
One directly verifies that $\mu$ is 
basepoint free for $Z$.
Thus, Proposition~\ref{prop:ufdcrit}~(i) shows
that after shrinking $U$ suitably, $R_g$ 
admits unique factorization in $R_g$
for all $g \in U$.
Since $X_g$ should be a Fano variety,
$-\KKK = (1, 2)$ has to be ample
and thus we have to take 
$\lambda = \cone(w_4, w_5)$.
Then $Z_\mu$ is smooth and $\mu \in \lambda$ 
holds.
Thus, Proposition~\ref{cor:rk2bertini} shows
that after possibly shrinking $U$ again,
$X_g$ is smooth for all $g \in U$.

\medskip 
\noindent
\emph{Constellation~(ii)}.
Here we obtain $w_4 = (0,1)$ by the same arguments 
used for showing $w_5 = (0,1)$.
Write $w_3 = (a_3,b_3)$ and let $k$ be the unique 
positive integer with $\mu = k w_3$.
Then $k \ge 2$ as $R_g$ is spread and
$T_1, \ldots, T_7$ form a minimal system of
generators.
By Proposition~\ref{prop:anticanclass}, the anticanonical
class of $X = X(\lambda)$ is given as
$$
-\KKK_X
\ = \
(2 + (1-k)a_3,\; 4 + (1-k)b_3).
$$
Moreover, we have $\varrho_3 \not\in \Lambda(R_g)$ 
due to Lemma~\ref{lem:raypower}~(i) and 
Remark~\ref{rem:barXfacecrit}~(i),
the defining GIT-cone $\lambda$ of $X$ 
is the positive orthant. Thus the
Fano condition $-\KKK_X \in \lambda^\circ$ simply
means that both coordinates of $-\KKK_X$ are positive.
This leads to $a_3 = 1$, $k = 2$ and
$b_3 \leq 3$.
These are Numbers~\ref{cand:II-2-1} to~\ref{cand:II-2-3}
of Theorem~\ref{thm:candidates}.

The verification of these candidates as specifying 
data of a general smooth Fano hypersurface
Cox ring is done by the same arguments
as in Case~\ref{prop:deginrelint-collaps}~(i-a) from Part~I,
except that for Numbers~\ref{cand:II-2-2}
and~\ref{cand:II-2-3} one has to verify smoothness
of $Z_\mu$ explicitly.

\medskip \noindent
\emph{Constellation~(iii)}.
We obtain $w_3 = (1,0)$
by analogous arguments as used 
for showing $w_5 = (0,1)$ before.
The degree $w_4 = (a_4, b_4)$
has to be determined.
A suitable admissible coordinate change 
yields $a_4 \geq b_4$. 
By Proposition~\ref{prop:anticanclass} the anticanonical
class of $X = X(\lambda)$ is given as
\[
	-\KKK_X = (3 + (1-k)a_4,\; 3 + (1-k)b_4),
\]
where $k \in \ZZ_{\ge 0}$ is defined via $\mu = k w_3$.
As in the preceding constellation, we 
see that $\lambda$ is the positive orthant.
Thus, $X(\lambda)$ being Fano just means 
that both coordinates of $-\KKK_X$
are positive. 
We end up with the specifying data 
from Numbers~\ref{cand:II-3-1} to~\ref{cand:II-3-4}
of Theorem~\ref{thm:candidates}.

The verification of these candidates runs with  
the same arguments as in 
Case~\ref{prop:deginrelint-collaps}~(i-a) 
from Part~I, except that we have verify smoothness 
of $Z_\mu$ explicitly.
\end{proof}

We treat Case~\ref{rem:majorconsts}~III
that means that the degree $\mu$ of the relation 
lies in the bounding ray $\varrho_1$ of the effective cone.

\begin{lemma} 
\label{lem:freedegspos}
Let $X = X(\lambda)$ be as in Setting~\ref{rem:effrho2}
and $1 \leq i < j \leq r$
such that $g$ neither depends on $T_i$ nor on $T_j$.
If $X$ is quasismooth,
then $w_i, w_j$ lie either both in $\lambda^-$ 
or both in $\lambda^+$.
\end{lemma}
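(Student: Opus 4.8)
The plan is to argue by contradiction, excluding the only mixed configuration left open by the counter-clockwise numbering, namely $w_i \in \lambda^-$ and $w_j \in \lambda^+$.

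First I would check that neither $w_i$ nor $w_j$ lies in $\lambda^\circ$. Since $X = X(\lambda)$ is $\QQ$-factorial, the cone $\lambda = \SAmple(X)$ is two-dimensional by Proposition~\ref{prop:locprops}~(i). Were $w_i \in \lambda^\circ$, Proposition~\ref{prop:degincone}~(i) would produce a monomial $T_i^{l_i}$ of $g$ with $l_i \ge 1$ (the exponent being positive because $g$ is homogeneous of nonzero degree), contradicting that $g$ does not depend on $T_i$; likewise for $w_j$. Hence both $w_i$ and $w_j$ lie in $\lambda^- \cup \lambda^+$, and by the counter-clockwise ordering together with $i < j$, the only way they can fail to lie in a common half is $w_i \in \lambda^-$, $w_j \in \lambda^+$. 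This is the configuration to be ruled out.

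So assume $w_i \in \lambda^-$ and $w_j \in \lambda^+$. I claim $\gamma_{i,j}$ is then an $X$-face. Because $g$ depends on neither $T_i$ nor $T_j$ and carries no constant term, the polynomial $g_{\{i,j\}}$ is identically zero, in particular not a monomial, so $\gamma_{i,j}$ is an $\bar X$-face by Remark~\ref{rem:barXfacecrit}~(i); in particular $\bar X(\gamma_{i,j}) \neq \emptyset$. Moreover, since $w_i$ lies clockwise of $\lambda$ and $w_j$ counter-clockwise of $\lambda$, the two-dimensional cone $\cone(w_i,w_j)$ contains $\lambda$, so $\lambda^\circ \subseteq \cone(w_i,w_j)^\circ = Q(\gamma_{i,j})^\circ$. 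Thus $\gamma_{i,j}$ is an $X$-face.

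The heart of the proof is then to show that every $x \in \bar X(\gamma_{i,j})$ is a singular point of the hypersurface $\bar X = V(g)$, i.e.\ $\grad_x(g) = 0$, which contradicts quasismoothness through Proposition~\ref{prop:locprops}~(iii). Writing $g = \sum_\nu a_\nu T^\nu$, every exponent satisfies $\nu_i = \nu_j = 0$, so $\partial g/\partial T_i$ and $\partial g/\partial T_j$ vanish identically. For $k \neq i,j$, the monomials of $\partial g/\partial T_k$ are supported away from the indices $i$ and $j$, whereas $x$ has all coordinates other than $x_i, x_j$ equal to zero; hence the only term that could survive at $x$ stems from a linear monomial $T_k$ in $g$. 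But such a monomial would force $g$ to be homogeneous of degree $w_k$, and pointedness of the grading would then make $g = cT_k + h$ with $c \ne 0$ and $h$ free of $T_k$, contradicting the irredundancy of the presentation (equivalently, the minimality of the generating system, see Remark~\ref{rem:genCRirredundant}). Therefore $\grad_x(g) = 0$, giving the desired contradiction. The main obstacle is exactly this gradient computation: controlling every partial derivative at a point of $\bar X(\gamma_{i,j})$ and using minimality of the generators to discard the linear terms; once it is settled, the lemma follows at once.
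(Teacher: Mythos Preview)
Your argument is correct and follows the same route as the paper: assuming $w_i\in\lambda^-$, $w_j\in\lambda^+$, you show $\gamma_{i,j}$ is an $X$-face and that every point of $\bar X(\gamma_{i,j})$ is singular on $\bar X$, contradicting quasismoothness via Proposition~\ref{prop:locprops}. You are in fact more careful than the paper, which skips both the exclusion of $w_i,w_j\in\lambda^\circ$ and the explicit gradient computation (including the appeal to irredundancy to rule out a linear monomial $T_k$ in $g$).
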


\begin{proof}
Otherwise, we may assume $w_i \in \lambda^-$ 
and $w_j \in \lambda^+$.
Then $\gamma_{i,j}$ is an $X$-face
and $\bar X(\gamma_{i,j})$ is a singular 
point of $\bar X$.
According to Proposition~\ref{prop:locprops}~(iv),
this contradicts quasismoothness of~$X$.
\end{proof}

\begin{proof}[Proof of Theorem~\ref{thm:candidates}~Part~III] 
We may assume that the ray $\varrho_1$ is generated 
by the vector~$(1,0)$.
Let $m$ be the number with 
$w_1, \ldots, w_m \in \varrho_1$ and 
$w_{m+1}, \ldots, w_7 \not\in \varrho_1$.
Observe that due to $\mu \in \varrho_1$, 
the relation $g$ only depends on 
$T_1, \ldots, T_m$.

The first step ist to show that only 
for $m=5$, the specifying data 
$w_1,\ldots, w_7$ and $\mu$ in 
$K = \ZZ^2$ allow a
hypersurface Cox ring.
Since $\mu \in \varrho_1$,
Proposition~\ref{prop:hypmov} yields
$m \ge 3$.
As $\Mov(X)$ is of dimension two,
we must have $m \le 5$; see
Setting~\ref{rem:effrho2}.
Lemma~\ref{lem:freedegspos} shows
$w_{m+1},\ldots,w_r \in \lambda^+$.
Applying Lemma~\ref{lem:threegenerate}
to triples $w_1,w_2,w_i$ for $i \ge m+1$,
we obtain
$$
\mu = (\mu_1,0),
\qquad
w_i = (a_i,0), \ i = 1,\ldots, m,
\qquad
w_i = (a_i,1), \ i = m+1 ,\ldots, 7,
$$
where, for any two $1 \le i < j  \le m$,
the numbers $a_i$ and~$a_j$ are coprime
and we may assume $a_7 = 0$.
Moreover, we must have $a_{m+1} = \ldots = a_6$,
because otherwise we obtain a GIT 
cone $\lambda \ne \eta \in \Lambda(R)$ 
with $\eta^{\circ} \in \Mov(R)^{\circ}$
and the associated variety $X(\eta)$ 
is not quasismooth by Lemma~\ref{lem:freedegspos},
contradicting Remark~\ref{rem:smoothfanoprops}.
Proposition~\ref{prop:anticanclass}
and the fact that $X$ is Fano give us 
$$ 
(a_1+ \ldots + a_6 - \mu_1, \, 7-m) 
\ = \ 
- \mathcal{K}_X
\ \in \
\lambda^\circ
\ = \ 
\cone( (1,0), \, (a_{m+1}, 1))^\circ.
$$
Since $a_1, \ldots, a_m$ are pairwise coprime,
the component $\mu_1$ of the degree of the 
relation $g$ is greater or equal to 
$a_1 \cdots a_m$.
Using moreover $a_{m+1} = \ldots = a_6$,
we derive from the above Fano condition 
$$
a_1 \cdots a_m
\ \le \ 
\mu_1 
\ < \ 
a_1 + \ldots + a_m - a_{m+1},
$$
where we may assume $a_1 \le \ldots \le a_m$.
We exclude $m=3$: here, $g = g(T_1,T_2,T_3)$,
the above inequality forces  $a_1 = a_2 = 1$,
hence $g(T_1,T_2,0)$ is classically homogeneous
and $T_3$ is not prime in $R$, a contradiction.
Let us discuss $m = 4$.
The above inequality and pairwise coprimeness 
of the $a_i$ leave us with 
$$ 
a_1 = a_2 = a_3 = 1,
\qquad \qquad
a_1 = a_2 = 1, 
\ 
a_3 = 2,
\
a_4 = 3.
$$
In the case $a_3 = 1$, we must have $\mu_1 = ka_4$
with some $k \in \ZZ_{\ge 2}$, because otherwise,
the relation would be redundant or, seen similarly 
as above, one of $T_1,T_2,T_3$ would not be 
prime in $R$. 
The inequality gives $(k-1)a_4 < 3 - a_{m+1}$.
We arrive at the following possibilities:
$$ 
a_{m+1} = a_4 = 1, 
\ 
k = 2,
\quad
a_{m+1} = 0, 
\
a_1 = 1,
\ 
k = 2,3,
\quad
a_{m+1} = 0, 
\
a_1 = k = 2.
$$
The first constellation implies that 
$R$ is not factorial and hence is 
excluded.
In the each of remaining ones,
$X$ is a product of $\PP_2$ and a surface~$Y$
which must be smooth as $X$ is so.  
Moreover, for the Picard numbers,
we have
$$
\rho(X) \ = \ \rho(\PP_2) + \rho(Y).
$$
Thus, $\rho(Y) = 1$.
Finally, being a Mori fiber, $Y$ is a del
Pezzo surface.
We arrive at $Y = \PP_2$ and hence
$X$ is toric. A contradiction to
$X$ having a hypersurface Cox ring.
We conclude that $m = 5$ is the only 
possibility.
In this case, $\lambda = \cone(w_1,w_6)$
holds and our degree matrix is of the form 
$$
Q \ = \
[w_1, \dotsc ,w_7]
\ = \ 
\left[
\begin{array}{ccccc}
a_1 & \ldots & a_5 & a_6 & 0 
\\
0  & \ldots & 0 & 1 & 1 
\end{array}
\right]
\qquad
1 \le a_1 \le  \ldots \le  a_5,
\quad 
0 \le a_6.
$$
As mentioned before, $g$ neither depends
on $T_6$ nor on $T_7$. Consequently,
we can write~$R$ as a polynomial 
ring over a $K$-graded subalgebra $R' \subseteq R$ 
as follows:
$$
R \ = \ R'[T_6,T_7],
\qquad
R' \ := \ \CC[T_1,\ldots,T_5] / \langle g \rangle.
$$
Moreover, $R'$ is $\ZZ$-graded via $\deg(T_i) := a_i$.
We claim that the $\ZZ$-graded algebra~$R'$ is a 
smooth Fano hypersurface Cox ring.
if the $K$-graded algebra $R$ is so.
First observe that $R'$ inherits the properties 
of an abstract Cox ring from $R$.
Moreover, with 
$\bar X ' = V(g) \subseteq \CC^5$,
we have $\bar X = \bar X' \times \CC^2$.
Now, the action of the one-dimensional torus 
$H' = \Spec  \, \CC[\ZZ]$ on $\bar X'$
 admits a unique projective quotient 
in the sense of Construction~\ref{constr:mds},
namely
$$
X' \ =  \ \hat X' \quot H',
\qquad\qquad
\hat X' 
\ = \ 
\bar X ' \setminus \{0\}. 
$$
Propositions~\ref{prop:mdsprops} and~\ref{prop:anticanclass} 
show that $X'$ is a Fano variety.
Observe that each $X'$-face of $\gamma_0' \preccurlyeq \gamma'$ 
of the orthant $\gamma' \subseteq \QQ^5$
defines and $X$-face $\gamma_0 = \gamma_0' + \cone(e_6,e_7)$.
In particular, using Proposition~\ref{prop:locprops}~(ii) and~(iv),
we see that $X'$ is smooth if $X$ is so.
Moreover, $R'$ is a smooth hypersurface Cox ring
if $R$ is so.
The smooth Fano threefolds with 
hypersurface Cox ring are listed in~\cite[Thm.~4.1]{DHHKL},
which gives us the possible values of $a_1,\ldots,a_5$
and from the Fano condition on $X$, we infer 
$a_6 + \mu_1 < a_1 + \ldots + a_5$.
So, we end up with the specifying data as 
in Theorem~\ref{thm:candidates}
Numbers~\ref{cand:V-1-1} to~\ref{cand:V-1-10}.
To show that these data indeed produce
smooth Fano general hypersurface Cox rings,
one proceeds by using our toolbox
in a similar way as in the previously
presented parts of the proof.
\end{proof}

\section{Birational geometry}
\label{sec:geometry}

We begin with a look at the birational geometry
of the Fano fourfolds from Theorem~\ref{thm:candidates}.
Let us briefly recall the necessary background.
Consider any $\QQ$-factorial Mori dream space 
$X = X(\lambda)$ arising from an abstract 
Cox ring $R = \oplus_K R_w$ as in 
Construction~\ref{constr:mds}.
Assume that $K_\QQ = \Cl_\QQ(X)$ is of dimension two.
Then the GIT-fan $\Lambda(R)$ looks as follows:
\begin{center}
\begin{tikzpicture}[scale=0.6]
\coordinate(o) at (0,0);
\coordinate(w1) at (1.75,0.25);
\coordinate(rho1) at (3.5,0.5);
\coordinate(w2) at (2,1);
\coordinate(rho2) at (4,2);
\coordinate(w-) at (1.75,1.6);
\coordinate(rho-) at (3.5,3.2);
\coordinate(w+) at (1.1,2);
\coordinate(rho+) at (2.2,4);
\coordinate(wr-1) at (-.6,1.8);
\coordinate(rhor-1) at (-1.3,3.9);
\coordinate(wr) at (-1,1.7);
\coordinate(rhor) at (-2,3.4);
\coordinate(lambda) at (2.3,3);
\coordinate(a) at (3.2,1.8);
\coordinate(b) at (2.8,2.5);
\coordinate(c) at (1.5,3.1);
\coordinate(d) at (-.8,3);
\path[fill=gray!60!] (o)--(rho+)--(rho-)--(o);
\draw[fill, color=black] (w1) circle (0.5ex) node[below]{\small{$w_1$}};
\draw[color=black] (o)--(rho1);
\draw[fill, color=black] (w2) circle (0.5ex) node[below]{\small{$w_2$}};
\draw[color=black] (o)--(rho2);
\draw[fill, color=black] (w-) circle (0.5ex);
\draw[color=black] (o)--(rho-);
\draw[fill, color=black] (w+) circle (0.5ex);
\draw[color=black] (o)--(rho+);
\draw[fill, color=black] (wr-1) circle (0.5ex) node[right]{\small{$w_{r-1}$}};
\draw[color=black] (o)--(rhor-1);
\draw[fill, color=black] (wr) circle (0.5ex) node[left]{\small{$w_{r}$}};
\draw[color=black] (o)--(rhor);
\path[fill, color=black] (lambda) circle (0.0ex)  node[]{\small{$\lambda$}};
\draw[fill, thick, color=black, dotted] (a)--(b);
\draw[fill, thick, color=black, dotted] (c)--(d);
\end{tikzpicture}   
\end{center}
where, as in Setting~\ref{rem:effrho2}, we order the 
generator degrees $w_1, \ldots, w_r \in K$ 
of $R$ counter-clockwise.
The moving cone $\Mov(X)$ is spanned by 
$w_2$ and $w_{r-1}$.
If $w_2 \in \lambda$ holds, then with 
$\tau = \cone(w_2)$ we have 
$$ 
\bar X^{ss}(\lambda) \ \subseteq \ \bar X^{ss}(\tau),
$$
which induces a morphism $\pi \colon X \to Y$ 
from $X = \bar X^{ss}(\lambda) \quot H$ onto 
$Y = \bar X^{ss}(\tau) \quot H$.
Recall that $\pi$ is an elementary 
contraction in the sense of~\cite{Ca}.
In particular, we have the following 
two possibilities:
\begin{itemize}
\item
If $w_2 \not\in \cone(w_1)$ holds, then 
$\pi \colon X \to Y$ is birational and 
contracts the prime divisor $D_1 \subseteq X$ 
corresponding to the ray through $w_1$.
In this case, we write $X \sim Y$ for the morphism
$\pi$ and denote by 
$C \subseteq Y$ the center of the contraction.
\item
$\pi \colon X \to Y$ is a proper fibration 
with $\dim(Y) < \dim(X)$.
In this case, we write $X \to Y$ for the morphism
$\pi$ and denote by 
$F \subseteq X$ the general fiber.
\end{itemize}
Similarly, if $w_{r-2} \in \Mov(X)$ holds,
we use the same notation.
In general, $\lambda$ need not to have
common rays with $\Mov(X)$.
However, given a ray $\varrho \subseteq \Mov(X)$,
we find a small quasimodification 
$X \dasharrow X'$, where $X'$ stems from 
a chamber $\lambda' \in \Lambda(R)$ 
sharing the ray $\varrho$ with $\Mov(X)$.
We then write $X' \sim Y$ or $X' \to Y$ 
etc. accordingly.

\begin{remark}
If~$X$ is as in Theorem~\ref{thm:candidates},
then $X$ admits at least one elementary 
contraction and at most one small 
quasimodification $X \dasharrow X'$.
If there is one, then $X'$ is smooth
due to Remark~\ref{rem:smoothfanoprops}.
\end{remark}

Now assume in addition that $X$ has a 
hypersurface Cox ring and consider 
the toric embedding $X = X_g \subseteq Z$ 
from Construction~\ref{constr:hypersurf}.
Given an elementary contraction of $\pi \colon X_g \to Y$,
a suitable choice of the cone $\tau$ in 
Construction~\ref{constr:hypersurf} leads 
to a commutative diagramm
$$ 
\xymatrix{
X
\ar@{}[r]|\subseteq
\ar[d]_{\pi}
&
Z
\ar[d]^{\pi_Z}
\\
Y 
\ar@{}[r]|\subseteq 
&
W
}
$$
where $\pi_Z \colon Z \to W$ is an elementary 
contraction of the ambient toric variety $Z$.
In particular, we have in this setting
that for every point $y \in Y$, the fiber
$\pi^{-1}(y) \subseteq X$ is contained 
in the fiber $\pi_Z^{-1}(y) \subseteq Z$.
This gives in particular a description 
for the general fiber $F \subseteq X$ 
as a subvariety of the general fiber 
$F_Z \subseteq Z$.

Let us fix the necessary notation to 
formulate the result.
By $Y_{d;a_1^{k_1},\ldots,a_n^{k_n}}$ 
we denote a (not necessarily general) 
hypersurface of degree~$d$ 
in the weighted projective space 
$\PP_{a_1^{k_1}, \ldots, a_n^{k_n}}$,
where, as usual, $a_i^{k_i}$ means that 
$a_i \in \ZZ_{\ge 1}$ is repeated 
$k_i$ times.
For a hypersurface of degree $d$ in 
the classical projective space
$\PP_n$ we just write~$Y_{d;n}$.
In our situation, this notation applies 
to the target spaces $Y \subseteq W$ 
in case of a birational elementary 
contraction and to the general fiber 
$F \subseteq F_Z$ in case of a 
fibration.

\begin{proposition}
\label{prop:contractions}
The subsequent table lists 
the possible elementary contractions
for~$X$ as in Theorem~\ref{thm:candidates},
where~$X$ is not a cartesian product;
the notation $Y^*$ in the context of 
a birational contraction indicates that 
the target space is singular.

\begin{center}
\small
\newcommand{\mycolumnwidth}{.485\textwidth}

\begin{minipage}[t]{\mycolumnwidth}
\begin{tabular}[t]{cCC}
\toprule
\emph{No.} & \emph{Contraction 1} & \emph{Contraction 2} \\
\midrule
\multirow{2}{*}{1} & X \rightarrow \PP_3 &  X \rightarrow \PP_2 \\
& F = Y_{1;2} & F = Y_{1;3}  \\ \midrule

\multirow{2}{*}{2} & X \rightarrow \PP_3 &  X \rightarrow \PP_2 \\
& F = Y_{1;2} & F = Y_{2;3}  \\ \midrule

\multirow{2}{*}{3} & X \rightarrow \PP_3 &  X \rightarrow \PP_2 \\
& F = Y_{1;2} &  F = Y_{3;3} \\ \midrule

\multirow{2}{*}{4} & X \rightarrow \PP_3 &  X \rightarrow \PP_2 \\
& F = Y_{2;2} &  F = Y_{1;3} \\ \midrule

\multirow{2}{*}{5} & X \rightarrow \PP_3 &  X \rightarrow \PP_2 \\
& F = Y_{2;2} &  F = Y_{2;3} \\ \midrule

\multirow{2}{*}{6} & X \rightarrow \PP_3 &  X \rightarrow \PP_2 \\
& F = Y_{2;2} &  F = Y_{3;3} \\ \midrule

\multirow{2}{*}{7} & X \rightarrow \PP_3 &  X \sim Y_{2;5}   \\
& F = Y_{1;2} &  C = \PP_1 \\ \midrule

\multirow{2}{*}{8} & X \rightarrow \PP_3 &  X \sim Y_{3;5}  \\
& F = Y_{2;2} &  C = \PP_1 \\ \midrule

\multirow{2}{*}{9} & X \rightarrow \PP_3 &  X \sim Y_{3;5}^{\ast} \\
& F = Y_{1;2} &  C = \PP_1 \\ \midrule

\multirow{2}{*}{10} & X \rightarrow \PP_3 &  X \sim Y_{4;5}^{\ast} \\
& F = Y_{2;2} &  C = \PP_1  \\ \midrule

\multirow{2}{*}{11} & X \rightarrow \PP_3 &  X \sim Y_{3;1^4,2^2}^{\ast} \\
& F = Y_{1;2} &  C = \PP_1 \\ \midrule

\multirow{2}{*}{12} & X \rightarrow \PP_3 &  X \sim Y_{5;1^4,2^2}^{\ast} \\
& F = Y_{2;2} &  C = \PP_1 \\ \midrule

\multirow{2}{*}{13} & X \rightarrow \PP_2 &  X' \rightarrow \PP_2 \\
& F = Y_{2;3} &  F = Y_{1;3} \\ \midrule

\multirow{2}{*}{14} & X \rightarrow \PP_2 &  X' \rightarrow \PP_2 \\
& F = Y_{3;3} &  F = Y_{2;3} \\ \midrule

\multirow{2}{*}{15} & X \rightarrow \PP_2 &  X' \sim Y_{4;1^5,2}  \\
& F = Y_{3;2} &  C = \PP_1 \\ \bottomrule
\end{tabular}
\end{minipage}
\begin{minipage}[t]{\mycolumnwidth}
\begin{tabular}[t]{cCC}
\toprule
\emph{No.} & \emph{Contraction 1} & \emph{Contraction 2} \\
\midrule

\multirow{2}{*}{16} & X \rightarrow \PP_3 &  X' \rightarrow \PP_1 \\
& F = Y_{1;2} &  F = Y_{2;4} \\ \midrule

\multirow{2}{*}{17} & X \rightarrow \PP_3 &  X' \rightarrow \PP_1 \\
& F = Y_{2;2} &  F = Y_{3;4} \\ \midrule

\multirow{2}{*}{18} & X \sim Y_{4;5} &  X \rightarrow \PP_2  \\
& C = \PP_2 &  F = Y_{3;3} \\ \midrule

\multirow{2}{*}{19} & X \sim Y_{3;5} &  X \rightarrow \PP_2  \\
& C = \PP_2 &  F = Y_{2;3} \\ \midrule

\multirow{2}{*}{20} & X \sim Y_{2;5} &  X \rightarrow \PP_2  \\
& C = \PP_2 &  F = Y_{1;3} \\ \midrule

\multirow{2}{*}{21} & X \sim Y_{4;1^5,2} &  X \rightarrow \PP_1  \\
& C = \PP_2 &  F = Y_{3;4} \\ \midrule

\multirow{2}{*}{22} & X' \rightarrow \PP_1 &  X \rightarrow \PP_2 \\
& F = Y_{2;4} &  F = Y_{2;3} \\ \midrule

\multirow{2}{*}{23} & X' \rightarrow \PP_1 &  X \rightarrow \PP_2 \\
& F = Y_{3;4} &  F = Y_{3;3} \\ \midrule

\multirow{2}{*}{24} & X' \rightarrow \PP_1 &  X \rightarrow \PP_2 \\
& F = Y_{4;1^4,2} &  F = Y_{4;1^3,2} \\ \midrule

\multirow{2}{*}{25} & X' \rightarrow \PP_1 &  X \rightarrow \PP_2 \\
& F = Y_{6;1^3,2,3} &  F = Y_{6;1^2,2,3} \\ \midrule

\multirow{2}{*}{26} & X \rightarrow \PP_1 &  X \rightarrow \PP_3 \\
& F = Y_{2;4} &  F = Y_{2;2} \\ \midrule

\multirow{2}{*}{27} & X \rightarrow \PP_1 &  X \rightarrow \PP_3 \\
& F = Y_{4;1^4,2} &  F = Y_{2;2} \\ \midrule

\multirow{2}{*}{28} & X \rightarrow \PP_1 &  X \rightarrow \PP_3 \\
& F = Y_{6;1^4,3} &  F = Y_{2;2} \\ \midrule

\multirow{2}{*}{29} & X \rightarrow \PP_2 &  X \rightarrow \PP_2 \\
& F = Y_{2;3} &  F = Y_{2;3} \\ \midrule

\multirow{2}{*}{30} & X \rightarrow \PP_2 &  X \rightarrow \PP_2 \\
& F = Y_{3;3} &  F = Y_{3;3} \\ \bottomrule

\end{tabular}
\end{minipage}

\begin{minipage}[t]{\mycolumnwidth}
\begin{tabular}[t]{cCC}
\toprule
\emph{No.} & \emph{Contraction 1} & \emph{Contraction 2} \\
\midrule

\multirow{2}{*}{31} & X \rightarrow \PP_2 &  X \rightarrow \PP_2 \\
& F = Y_{2;3} &  F = Y_{4;1^3,2} \\ \midrule

\multirow{2}{*}{32} & X \rightarrow \PP_2 &  X \rightarrow \PP_2 \\
& F = Y_{4;1^3,2} &  F = Y_{4;1^3,2} \\ \midrule

\multirow{2}{*}{33} & X' \sim Y_{6;1^4,2,3}^{\ast} &  X \rightarrow \PP_2 \\
& C = \{\mathrm{pt}\} &  F = Y_{4;1^3,2} \\ \midrule

\multirow{2}{*}{34} & X \sim Y_{2;5} &  X \rightarrow \PP_1  \\
& C = \PP_1 \times \PP_1 &  F = Y_{2;4}\\ \midrule

\multirow{2}{*}{35} & X \sim Y_{3;5} &  X \rightarrow \PP_1  \\
& C = Y_{3;3} &   F = Y_{3;4} \\ \midrule

\multirow{2}{*}{36} & X \sim Y_{4;5} &  X \rightarrow \PP_1  \\
& C = Y_{4;3} &   F = Y_{4;4} \\ \midrule

\multirow{2}{*}{37} & X \sim Y_{4;1^5,2} &  X \rightarrow \PP_1  \\
& C = Y_{4;1^3,2} &   F = Y_{4;1^4,2}\\ \midrule

\multirow{2}{*}{38} & X \sim Y_{6;1^5,3} &  X \rightarrow \PP_1 \\
& C = Y_{6;1^3,3} &  F = Y_{6;1^4,3} \\ \midrule

\multirow{2}{*}{39} & X \sim Y_{6;1^4,2,3} &  X \rightarrow \PP_1  \\
& C = Y_{6;1^2,2,3} &  F = Y_{6;1^3,2,3} \\ \midrule

\multirow{2}{*}{40} & X \sim Y_{2;5} &  X \rightarrow \PP_2  \\
& C = \PP_1 &  F = Y_{2;3} \\ \midrule

\multirow{2}{*}{41} & X \sim Y_{3;5} &  X \rightarrow \PP_2  \\
& C = Y_{3;2} &  F = Y_{3;3} \\ \midrule

\multirow{2}{*}{42} & X \sim Y_{4;1^3,2^3}^{\ast} & X \rightarrow \PP_2  \\
& C = \PP_1 &   F = Y_{2;3} \\ \midrule

\multirow{2}{*}{43} & X \sim Y_{6;1^2,2^3}^{\ast} & X \rightarrow \PP_2  \\
& C = Y_{3;2} &  F = Y_{3;3} \\ \midrule

\multirow{2}{*}{44} & X \sim Y_{4;1^5,2} &  X \rightarrow \PP_2  \\
& C = Y_{4;1^2,2} &   F = Y_{4;1^3,2}\\ \midrule

\multirow{2}{*}{45} & X \sim Y_{8;1^3,2^2,4}^{\ast} & X \rightarrow \PP_2  \\
& C = Y_{4;1^2,2} &  F = Y_{4;1^3,2} \\ \midrule

\multirow{2}{*}{46} & X \sim Y_{6;1^4,2,3} &  X \rightarrow \PP_2  \\
& C = Y_{6;1,2,3} &  F = Y_{6;1^2,2,3} \\ \bottomrule

\end{tabular}
\end{minipage}
\begin{minipage}[t]{\mycolumnwidth}
\begin{tabular}[t]{cCC}
\toprule
\emph{No.} & \emph{Contraction 1} & \emph{Contraction 2} \\
\midrule

\multirow{2}{*}{47} & X \sim Y_{12;1^3,2,4,6}^{\ast} & X \rightarrow \PP_2 \\
& C = Y_{6;1,2,3} &  F = Y_{6;1^2,2,3} \\ \midrule

\multirow{2}{*}{48} & X \sim \PP_4 &  X' \rightarrow \PP_1  \\
& C = \PP_1 &  F = Y_{2;4} \\ \midrule

\multirow{2}{*}{49} & X \sim Y_{6;1^2,2^3,3}^{\ast} &  X' \rightarrow \PP_1  \\
& C = Y_{6;2^3,4} &  F = Y_{3;4} \\ \midrule

\multirow{2}{*}{50} & X \sim Y_{4;1^5,2}^{\ast} & X \rightarrow \PP_2  \\
& C = \PP_1 &  F = Y_{2;3} \\ \midrule

\multirow{2}{*}{51} & X \sim Y_{6;1^5,3}^{\ast} & X \rightarrow \PP_2 \\
& C = \PP_1 &  F = Y_{4;1^3,2} \\ \midrule

\multirow{2}{*}{52} & X \sim Y_{6;1^4,2,3}^{\ast} & X \rightarrow \PP_1  \\
& C = \PP_1 &   F = Y_{4;1^4,2}\\ \midrule

\multirow{2}{*}{53} & X \sim \PP_4 &  X \sim Q_4    \\
& C = \PP_1 \times \PP_1 &  C = \{\mathrm{pt}\} \\ \midrule

\multirow{2}{*}{54} & X \sim Y_{4;1^5,2} &  X \sim Y_4    \\
& C = Y_{4;3} &  C = \{\mathrm{pt}\} \\ \midrule

\multirow{2}{*}{55} & X \sim \PP_4 &  X \sim Y_{3;1^5,2}^{\ast} \\
& C = Y_{3;3} &  C = \{\mathrm{pt}\} \\ \midrule

\multirow{2}{*}{56} & X \sim \PP_4 &  X \sim Y_{3;1^5,3}  \\
& C = Y_{4;3} &  C = \{\mathrm{pt}\} \\ \midrule

\multirow{2}{*}{57} & X \sim Y_{6;1^4,2,3} &  X \sim Y_{6;1^5,3}  \\
& C = Y_{4;1^3,3} &  C = \{\mathrm{pt}\} \\ \midrule

\multirow{2}{*}{60} & X \rightarrow Y_{6;1^3,2,3} &  X \sim Y_{6;1^4,2,3}^{\ast} \\
& F = \PP_1 &  C = \{\mathrm{pt}\} \\ \midrule

\multirow{2}{*}{62} & X \rightarrow Y_{4;1^4,2} &  X \sim Y_{4;1^5,2}^{\ast} \\
& F = \PP_1 &  C = \{\mathrm{pt}\} \\ \midrule

\multirow{2}{*}{64} & X \rightarrow Y_{3;4} &  X \sim Y_{3;5}^{\ast} \\
& F = \PP_1 &  C = \{\mathrm{pt}\} \\ \midrule

\multirow{2}{*}{66} & X \rightarrow Y_{2;4} &  X \sim Y_{2;5}^{\ast} \\
& F = \PP_1 &  C = \{\mathrm{pt}\} \\ \midrule

\multirow{2}{*}{67} & X \rightarrow Y_{2;4} &  X \sim Y_{2;1^5,2}^{\ast} \\
& F = \PP_1 &  C = \{\mathrm{pt}\} \\ 
\bottomrule
\end{tabular}
\end{minipage}

\end{center}

\bigskip
\noindent
The remaining families of Theorem~\ref{thm:candidates} consist
of cartesian products $Y \times \PP_1$ where
the first factor $Y$ is a smooth threedimensional Fano hypersurface
of Picard number one as displayed in the following table.
\begin{table}[H]
\begin{tabular}{CCCCCC} \toprule
\text{No.} & 58 & 59 & 61 & 63 & 65 \\ \midrule
Y & Y_{6;1^4,3}
& Y_{6;1^3,2,3}
& Y_{4;1^4,2}
& Y_{3;4}
& Y_{2;4} \\ 
\bottomrule
\end{tabular}
\end{table}
\end{proposition}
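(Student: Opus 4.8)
The plan is to analyze the two boundary rays of $\Mov(X)$ separately, using the description of elementary contractions in Picard number two recalled just above. By Proposition~\ref{prop:mdsprops} and Remark~\ref{rem:EffRMovR}, $\Mov(X)$ is spanned by $w_2$ and $w_{r-1} = w_6$, and along each of these rays we obtain an elementary contraction $\pi$, after passing to a small quasimodification $X \dasharrow X'$ in case $\lambda = \SAmple(X)$ does not already meet the ray in question. The type of $\pi$ is decided combinatorially: fixing the boundary ray $\tau$ towards which we contract, I would single out the variables whose degree lies on $\tau$ as the homogeneous coordinates of the base, and project the remaining degrees to $\ZZ^2/\ZZ\tau \cong \ZZ$. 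If all these projected weights are non-negative, then $\pi$ is a fibration; if one of them is negative, coming from a generator degree $w_r$ lying strictly beyond $\tau$, then $\pi$ is the birational contraction of the prime divisor $D_r$. Running through the degree matrices of Theorem~\ref{thm:candidates} sorts every contraction into one of these two cases.

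In the fibration case I would exploit the commutative square $X \subseteq Z$, $Y \subseteq W$ with the toric elementary contraction $\pi_Z \colon Z \to W$ from Construction~\ref{constr:hypersurf}. Contracting $Z$ towards $\tau$ realizes $W$ as the (weighted) projective space on the variables with degree on $\tau$, and the general fibre $F_Z$ as the (weighted) projective space whose weights are exactly the projected degrees computed above. Since $\pi^{-1}(y) \subseteq \pi_Z^{-1}(y)$ for every $y \in Y$, the general fibre of $X$ is $F = V(g|_{F_Z}) \subseteq F_Z$, a hypersurface whose weighted degree equals the image of $\mu$ under the projection $\ZZ^2 \to \ZZ^2/\ZZ\tau$. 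Reading off this degree together with the fibre weights yields the entries $F = Y_{d;\, a_1^{k_1},\dots}$ of the table.

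For the birational case I would instead project along the degree $w_r$ of the contracted divisor. As $\Cl(X)$ is torsion free by Remark~\ref{rem:smoothfanoprops}, the degree $w_r$ is primitive, so $\ZZ^2/\ZZ w_r \cong \ZZ$; the images of the remaining generator degrees are the weights of the Picard-number-one toric variety $W = \PP_{a_1^{k_1},\dots}$, and $Y = V(\bar g) \subseteq W$ is the hypersurface of degree equal to the image of $\mu$, where $\bar g$ arises from $g$ by eliminating $T_r$. This produces the symbol $Y = Y_{d;\, a_1^{k_1},\dots}$. Since $\pi$ is an isomorphism over $Y \setminus C$ and $X$ is smooth, $Y$ is automatically smooth away from the center $C = \pi(D_r)$, so the decoration $Y^{\ast}$ records only whether $Y$ acquires a singularity along $C$; I would decide this from the local toric structure of $\pi_Z$ along $C$, equivalently by inspecting the singular locus of the image hypersurface. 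The center $C$ itself is read off as the image of the contracted divisor, that is, from the restriction of $g$ after setting the coordinate of $D_r$ to zero.

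The families absent from the contraction table are the cartesian products: these are precisely the cases in which $g$ omits the two equal-degree variables $T_{r-1}, T_r$ spanning a boundary ray of $\Eff(X)$, so that $R = R'[T_{r-1}, T_r]$ splits off a polynomial ring and $X \cong Y \times \PP_1$ with $Y$ a Picard-number-one Fano threefold hypersurface, identified via~\cite[Thm.~4.1]{DHHKL}. The main obstacle is not a single conceptual step but the exhaustive bookkeeping: for each family one must pin down the weighted projective structure of every fibre and every birational target, and in the divisorial cases determine both the center $C$ and whether the target is singular. This last point is the genuinely delicate one, since two families can share the same ambient projective space and the same hypersurface degree and yet differ in the smoothness of the target (as witnessed by the entries $Y_{3;5}$ and $Y_{3;5}^{\ast}$ appearing in the table), so the singularity of $Y$ along $C$ has to be checked individually; the more involved instances I would verify with the help of~\cite{MDSpackage}.
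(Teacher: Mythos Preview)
Your proposal is correct and matches the paper's approach: the paper states that the proof is ``basically a case by case analysis of the contraction maps in coordinates'' via the toric embedding $X \subseteq Z$ from Construction~\ref{constr:hypersurf}, and works out one illustrative case (No.~9 in Remark~\ref{rem:geomdisc}) exactly along the lines you describe---projecting in Cox coordinates to identify the toric contraction $\pi_Z$, reading off base and fibre as (weighted) projective hypersurfaces, and in the divisorial case obtaining the target by setting the contracted variable to $1$ and checking singularity along the center directly from the resulting equation. Your systematic description of how to read off the fibre weights, the target weights, and the center from the degree matrix is precisely the bookkeeping the paper leaves implicit.
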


The proof of this proposition is basically 
a case by case analysis of the contraction 
maps in coordinates.
We restrict ourselves to perform this in the 
subsequent remark for one case, where we even 
go a bit deeper into the matter and specify 
also the singular fibers of the fibration.

\begin{remark}
\label{rem:geomdisc}
We take a closer look at the varieties $X$ 
from No.~9 of Theorem~\ref{thm:candidates}.
In this case the specifying data, that means the degree 
matrix $Q$ and the degee $\mu$ of the relation $g$, are
given by
\[
Q = \begin{bmatrix*}[r]
1 & 1 & 1 & 1 & 0 & 0 & -1 
\\
0 & 0 & 0 & 0 & 1 & 1 &  1
\end{bmatrix*}, 
\qquad
\mu = (2, 1).
\]
Due to $-\KKK = (1,2)$, we have $\lambda = \cone(w_1, w_5)$.
Observe that $\Mov(R)$ and $\lambda$ share the rays
$\varrho_1$ and $\varrho_5$. Thus $X$ admits
two elementary contractions $\pi_1 \colon X \rightarrow Y_1$
and $\pi_2 \colon X \rightarrow Y_2$ associated to $\varrho_1$
resp. $\varrho_5$.
To study $\pi_1$ and $\pi_2$ we make use of
the toric embedding
$X = X_g \subseteq Z$
from Construction~\ref{constr:hypersurf}.

First, we discuss $\pi_1$. Since $w_2 \in \varrho_1$ 
holds, the morphism $\pi_1$ is a fibration.
Moreover, $\pi_1$ is the restriction of
the corresponding ambient toric elementary 
contraction $\pi_{1,Z}$ of $Z$, 
which in turn is explicitly given 
as follows:
$$
\xymatrix@C=1.7em{
\bar{X} 
\ar@{}[r]|{\subseteq} 
\ar@{-->}[d] 
& 
\KK^7 
\ar@{-->}[d] 
\ar[rrrr]^{(z_1, \dotsc, z_7) \mapsto (z_1, \dotsc, z_4)} 
&&&& 
\KK^4 \ar@{-->}[d] 
\\
X \ar@{}[r]|{\subseteq}  
& 
Z \ar[rrrr]^{\pi_{1,Z}} 
&&&& 
\PP_3
} 
$$
Suitably sorting the terms of $g$ yields
a presentation $g = q_1 T_5 + q_2 T_6 + f T_7$
where $q_1, q_2 \in \KK[T_1, \dotsc, T_4]$ both are
quadrics and $f \in \KK[T_1, \dotsc, T_4]$ is a
cubic, each of which is general.
Note that $V(g) \subseteq \KK^7$
projects onto $\KK^4$ thus $Y_1 = \PP_3$.
For any point $y = [y_1, \dotsc, y_4] \in \PP_3$
the fiber $\pi_{1,Z}^{-1}(y)$ of the ambient toric
variety is given by
the equations
\[
y_2 T_1 - y_1 T_2 
\ = \
y_3 T_2 - y_2 T_3 
\ = \
y_4 T_3 - y_3 T_4 
\ = \ 
0.
\]
Besides we have $y_i \neq 0$ for some $i$.
Taking this into account
one directly checks $\pi_{1,Z}^{-1}(y) \cong \PP_2$.
Being homogeneous $g$ is compatible with this isomorphism,
thereby we obtain
\[
\pi_1^{-1}(y) 
\ \cong \ 
V(y_i q_1(y) T_0 + y_i q_2(y) T_1 + f(y) T_2) 
\ \subseteq \ 
\PP_2.
\]
We conclude that the general fiber $\pi_1^{-1}(y)$ is isomorphic
to $\PP_1$. In addition, $V(q_1, q_2, f) \subseteq \PP_3$ 
consists of precisely 12 points
$p_1, \dotsc, p_{12}$,
each of which has fiber $\pi_1^{-1}(p_i) \cong \PP_2$.

We turn to $\pi_2$.
From $w_7 \notin \varrho_5$ follows that 
$\pi_2$ is a birational morphism contracting
the prime divisor $V(T_7) \subseteq X$.
The according elementary contraction $\pi_{2,Z}$ 
of the ambient toric variety $Z$
is the blow-up of $\PP_5$ along 
$C = V_{\PP_5}(T_0, \dotsc, T_3) \cong \PP_1$. 
The situation is as in the subsequent diagram:
\[ 
\xymatrix@C=1.9em{
\bar{X} 
\ar@{}[r]|{\subseteq} 
\ar@{-->}[d] 
& 
\KK^7 
\ar@{-->}[d] 
\ar[rrrrr]^{(z_1, \dotsc, z_7) \mapsto (z_1 z_7, \dotsc, z_4 z_7, z_5, z_6)} 
&&&&& 
\KK^6 
\ar@{-->}[d] 
\\
X \ar@{}[r]|{\subseteq}  
& 
Z \ar[rrrrr]^{\pi_{2,Z}} 
&&&&& 
\PP_5
} 
\]
The target variety $Y_2 \subseteq \PP_5$
of $\pi_2$ is $V(g') \subseteq \PP_5$
where
$g' = g(T_0, \dotsc, T_6, 1)$.
From this we infer $C \subseteq Y_2$,
so $C$ is the center of $\pi_2$ as well.
In particular $\pi_2$ is the blow-up of
$Y_2$ along $C$.
Moreover, the polynomial $g'$ is an irreducible cubic
living in ${\langle T_0, \dotsc, T_3 \rangle}^2$.
Consequently, $Y_2$ is singular
at every point of $C$.
\end{remark}

\section{Hodge numbers}

Here we determine the Hodge numbers of the
Fano fourfolds from Theorem~\ref{thm:candidates}.
First, we note the following simple
observation.

\begin{proposition}
\label{prop:hodgediamond}
Let $X$ be a smooth projective 
Fano fourfold of Picard rank $2$.
Then the Hodge diamond of $X$ 
is the following.
\[
 \begin{array}{ccccccccc}
  &&&&1&&&&\\
  &&&0&&0&&&\\
  &&0&&2&&0&&\\
  &0&&h^{1,2}&&h^{2,1}&&0&\\
  0&&h^{1,3}&&h^{2,2}&&h^{3,1}&&0\\
  &0&&h^{3,2}&&h^{2,3}&&0&\\
  &&0&&2&&0&&\\
  &&&0&&0&&&\\
  &&&&1&&&&
 \end{array}
\]
\end{proposition}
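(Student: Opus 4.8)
The plan is to reduce the entire diamond to a few formal inputs, namely Hodge symmetry $h^{p,q}=h^{q,p}$, Serre duality $h^{p,q}=h^{4-p,4-q}$ (from $H^q(X,\Omega^p_X)\cong H^{4-q}(X,\Omega^{4-p}_X)^*$ on the smooth projective fourfold $X$), and two vanishing statements coming from the Fano condition. With these in hand, every entry of the diamond is forced except $h^{1,2}$, $h^{1,3}$ and $h^{2,2}$, which are the genuinely free ones and which sit in exactly the positions drawn. First I would record that $X$, being irreducible and projective, is connected, so $h^{0,0}=h^0(X,\OOO_X)=1$, and Serre duality gives $h^{4,4}=h^{0,0}=1$.

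Next I would invoke Kodaira vanishing, which applies since $-\KKK_X$ is ample and the ground field is $\CC$. Applied to the ample class $-\KKK_X$ it yields $0=H^q(X,\omega_X\otimes\OOO_X(-\KKK_X))=H^q(X,\OOO_X)$ for all $q>0$, so $h^{0,q}=0$ for $q=1,2,3,4$; by Hodge symmetry also $h^{p,0}=0$ for $p=1,2,3,4$. Combining this with Serre duality makes all the boundary entries vanish: every $h^{p,q}$ with $\{p,q\}\cap\{0,4\}\neq\emptyset$ and $(p,q)\notin\{(0,0),(4,4)\}$ is zero, while the two tips equal $1$. This is precisely the outer layer of the displayed diamond.

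It then remains to fix the second layer. Because $h^{2,0}=h^{0,2}=0$, the Hodge decomposition of $H^2(X,\CC)$ gives $b_2(X)=h^{1,1}$. Moreover $h^1(X,\OOO_X)=h^2(X,\OOO_X)=0$, so the exponential sequence $0\to\ZZ\to\OOO_X\to\OOO_X^{*}\to 0$ identifies $\Pic(X)$ with $H^2(X,\ZZ)$; in particular the Picard rank $\rho(X)$ equals $b_2(X)=h^{1,1}$. As $\rho(X)=2$ by hypothesis, $h^{1,1}=2$, and Serre duality gives $h^{3,3}=h^{1,1}=2$. Collecting all these values, and using $h^{2,1}=h^{1,2}=h^{3,2}=h^{2,3}$ together with $h^{3,1}=h^{1,3}$ (Hodge symmetry and Serre duality), one checks that the only entries not yet pinned to $0$, $1$ or $2$ are exactly those shown, in the stated positions.

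The only non-formal inputs are the Fano vanishing $H^{>0}(X,\OOO_X)=0$ and the identification $\rho(X)=h^{1,1}$, both standard consequences of $-\KKK_X$ being ample together with smoothness over $\CC$, so I do not expect a real obstacle here. I would stress that the three entries $h^{1,2}=h^{2,1}$, $h^{1,3}=h^{3,1}$ and $h^{2,2}$ are deliberately left undetermined: computing them for the individual families of Theorem~\ref{thm:candidates} is the substance of Proposition~\ref{prop:hodgenum}, whereas the present statement only fixes the overall shape of the Hodge diamond.
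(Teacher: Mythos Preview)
Your proof is correct and follows essentially the same route as the paper: Kodaira (the paper cites Kawamata--Viehweg, but ordinary Kodaira suffices here) gives $h^{0,q}=0$ for $q>0$, the exponential sequence then identifies $\rho(X)$ with $b_2(X)=h^{1,1}$, and Hodge symmetry plus Serre duality fill in the rest. If anything, your write-up is slightly more explicit than the paper's in spelling out the step $\Pic(X)\cong H^2(X,\ZZ)$ and hence $\rho(X)=b_2(X)$.
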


\begin{proof}
Ampleness of $-K_X$ and 
Kawamata-Viehweg vanishing
give $h^{p,0}(X) = 0$ for any $p>0$.
Moreover, plugging
$H^i(X,\mathcal O) = 0$ for $i = 1,2$
into the cohomology sequence associated
with the exponential sequence yields
$H^2(X,\mathbb C) \cong \CC^2$.
The Hodge decomposition together with
$h^{1,0}(X) = h^{0,1}(X) = 0$
shows $h^{1,1}(X) = 2$.
\end{proof}

By symmetry, we are left with computing
the Hodge numbers $h^{2,1}$, 
$h^{3,1}$ and $h^{2,2}$.
Here comes our result.

\begin{proposition}
\label{prop:hodgenum}
The subsequent table lists 
the Hodge numbers $h^{2,1}$, 
$h^{3,1}$ and $h^{2,2}$
for~$X$ as in Theorem~\ref{thm:candidates}.

\begin{table}[H]
\centering \upshape \small
\begin{subtable}[t]{0.32\linewidth}
\centering
\begin{tabular}[t]{c|c|c|c}
{\it No.} & $h^{2,1}$ & $h^{3,1}$ & $h^{2,2}$\\
\hline
1 & 0 & 0 & 3\\
2 & 0 & 0 & 10\\
3 & 0 & 0 & 29\\
4 & 0 & 0 & 3\\
5 & 0 & 3 & 40\\
6 & 0 & 30 & 185\\
7 & 0 & 0 & 4\\
8 & 0 & 1 & 23\\
9 & 0 & 0 & 14\\
10 & 0 & 18 & 126\\
11 & 0 & 0 & 5\\
12 & 0 & 12 & 95\\
13 & 0 & 0 & 4\\
14 & 0 & 6 & 65\\
15 & 0 & 5 & 55\\
16 & 0 & 0 & 6\\
17 & 0 & 9 & 77\\
18 & 0 & 21 & 143\\
19 & 0 & 1 & 22\\
20 & 0 & 0 & 3\\
21 & 0 & 5 & 53\\
22 & 0 & 0 & 10\\
\end{tabular}
\end{subtable}
\begin{subtable}[t]{0.32\linewidth}
\centering
\begin{tabular}[t]{c|c|c|c}
{\it No.} & $h^{2,1}$ & $h^{3,1}$ & $h^{2,2}$\\
\hline
23 & 0 & 13 & 103\\
24 & 0 & 35 & 218\\
25 & 0 & 114 & 591\\
26 & 0 & 0 & 10\\
27 & 0 & 20 & 138\\
28 & 0 & 112 & 570\\
29 & 0 & 1 & 22\\
30 & 0 & 45 & 255\\
31 & 0 & 10 & 94\\
32 & 0 & 100 & 508\\
33 & 0 & 24 & 162\\
34 & 0 & 0 & 4\\
35 & 0 & 1 & 28\\
36 & 0 & 22 & 162\\
37 & 0 & 5 & 60\\
38 & 0 & 71 & 402\\
39 & 0 & 24 & 170\\
40 & 0 & 0 & 4\\
41 & 1 & 1 & 23\\
42 & 0 & 0 & 10\\
43 & 1 & 19 & 131\\
44 & 1 & 5 & 54\\
\end{tabular}
\end{subtable}
\begin{subtable}[t]{0.32\linewidth}
\centering
\begin{tabular}[t]{c|c|c|c}
{\it No.} & $h^{2,1}$ & $h^{3,1}$ & $h^{2,2}$\\
\hline
45 & 1 & 50 & 288\\
46 & 1 & 24 & 163\\
47 & 1 & 159 & 793\\
48 & 0 & 0 & 3\\
49 & 1 & 2 & 31\\
50 & 0 & 3 & 40\\
51 & 0 & 65 & 356\\
52 & 0 & 20 & 139\\
53 & 0 & 0 & 3\\
54 & 0 & 6 & 72\\
55 & 0 & 0 & 8\\
56 & 0 & 1 & 21\\
57 & 0 & 25 & 181\\
58 & 52 & 0 & 2\\
59 & 21 & 0 & 2\\
60 & 21 & 0 & 2\\
61 & 10 & 0 & 2\\
62 & 10 & 0 & 2\\
63 & 5 & 0 & 2\\
64 & 5 & 0 & 2\\
65 & 0 & 0 & 2\\
66 & 0 & 0 & 2\\
67 & 0 & 0 & 2\\
\end{tabular}
\end{subtable}
\end{table}
\end{proposition}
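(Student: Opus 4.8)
The plan is to reduce everything to three quantities — the topological Euler characteristic $\chi(X)$ and the two Hodge numbers $h^{2,1}$ and $h^{3,1}$ — and to recover $h^{2,2}$ from them. Evaluating $\chi(X) = \sum_{p,q}(-1)^{p+q}h^{p,q}(X)$ on the Hodge diamond of Proposition~\ref{prop:hodgediamond}, and using $h^{p,q}=h^{q,p}=h^{4-p,4-q}$, leaves the single linear relation
\[
\chi(X) \;=\; 6 + h^{2,2}(X) - 4\,h^{2,1}(X) + 2\,h^{3,1}(X),
\]
so that $h^{2,2}$ is determined once $\chi$, $h^{2,1}$ and $h^{3,1}$ are known.

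First I would compute $\chi(X)$ from the toric embedding $X=X_g\subseteq Z$ of Construction~\ref{constr:hypersurf}. With $N_{X/Z}=\mathcal O_X(\mu)$ the adjunction sequence $0\to T_X\to T_Z|_X\to N_{X/Z}\to 0$ gives $c(T_X)=c(T_Z|_X)(1+\mu)^{-1}$, whence
\[
\chi(X)\;=\;\int_X c_4(T_X)\;=\;\int_Z \mu\cdot\bigl[c(T_Z)(1+\mu)^{-1}\bigr]_4 ,
\]
a degree-five intersection number on $Z$ that is read off from the Cox ring data through the intersection ring of $Z$; where $Z$ fails to be smooth one runs the computation on the smooth minimal ambient toric variety $Z_\mu$ containing $X$. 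For the two genuinely variable numbers I would argue as follows. The Lefschetz hyperplane theorem for the ample hypersurface $X\subseteq Z_\mu$ identifies $H^3(X)$ with $H^3$ of the ambient toric variety, which vanishes when $Z_\mu$ is smooth; this yields $h^{2,1}(X)=0$ for the bulk of the table. For $h^{3,1}$, ampleness of $-\KKK_X$ forces $h^{4,0}=0$, so the ambient classes meet $H^4(X)$ only in type $(2,2)$, and hence $h^{3,1}(X)=h^{3,1}_{\mathrm{prim}}(X)$ equals the dimension of a single graded piece of the Jacobian ring $R_g=S/J(g)$, in the degree dictated by the toric Griffiths--Dwork residue calculus. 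Feeding $\chi$, $h^{2,1}$ and $h^{3,1}$ into the displayed relation then produces $h^{2,2}$.

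The families that are cartesian products $Y\times\PP_1$ are handled uniformly and separately: since $H^1(Y)=0$ and $H^4(Y)$, $H^2(Y)$ are of pure type $(2,2)$ resp.\ $(1,1)$ for a Fano threefold $Y$ of Picard number one, Künneth gives at once $h^{3,1}=0$, $h^{2,2}=2$ and $h^{2,1}(X)=h^{2,1}(Y)$, the last value being read off from the classification of Fano threefold hypersurfaces in~\cite{DHHKL}. The main obstacle will be making the residue computation of $h^{3,1}$ rigorous and uniform for those families whose minimal ambient $Z_\mu$ is only simplicial and not smooth, and, in the same non-smooth cases, pinning down the few genuinely nonzero values $h^{2,1}=1$: there the naive Lefschetz vanishing no longer applies, and one must control the primitive part of $H^3(X)$ through the finer toric Lefschetz package together with the Leray structure of the elementary contractions of Proposition~\ref{prop:contractions}, verifying along the way the genericity of $g$ that guarantees the expected dimensions of the Jacobian ring.
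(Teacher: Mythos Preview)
Your route differs substantially from the paper's. The paper does not use Chern-class Euler characteristics, Lefschetz hyperplane, or a Jacobian/residue computation at all; it computes the Hodge--Deligne polynomial $e(X)=\sum e^{p,q}x^p\bar x^q$ directly, by restricting the toric orbit decomposition of $Z_g$ to $X$ and applying the Danilov--Khovanski\u{\i} formulas to each stratum (Lefschetz-type identity for $e^{p,q}$ with $p+q>n-1$, and the $\varphi_i(\Delta)$-formula for $\sum_q e^{p,q}$), with additivity over strata and multiplicativity for torus factors. The only nontrivial step is isolating $e^{1,2}$ on the open four-dimensional stratum, which is done by passing to a smooth compactification and using Serre duality to replace it by $e^{3,2}$. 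This is uniform across all 67 families and uses nothing about the position of $\mu$ in the nef cone.

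Your proposal, by contrast, rests on $X$ being an \emph{ample} hypersurface in $Z_\mu$, and this is where it breaks. The obstruction is not that $Z_\mu$ is simplicial rather than smooth---for a simplicial toric variety one still has $H^3_{\QQ}=0$ and rational Lefschetz would still give $h^{2,1}(X)=0$. The real problem is that $\mu$ is frequently \emph{not ample} on $Z_\mu$: in every family with $h^{2,1}(X)=1$ (Nos.~41, 43--47, 49) the class $\mu$ sits on the boundary ray of $\SAmple(X)=\SAmple(Z_\mu)$, and for Nos.~60, 62, 64, 66, 67 it even lies on the boundary of $\Eff(X)$; in several further families (e.g.~Nos.~13--15) $\mu$ is not in $\lambda$ at all and is not even nef. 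So Lefschetz simply does not apply in precisely the cases you need it most, and the Batyrev--Cox residue description of $H^{3,1}_{\mathrm{prim}}$ likewise requires ampleness (or at least a regularity hypothesis that fails here). Your suggested fallback via the Leray structure of the elementary contractions can be made to work item by item---for instance No.~41 is the blow-up of a smooth cubic fourfold along a plane cubic, so $h^{2,1}$ jumps by the genus of the centre---but turning this into a proof for all the exceptional cases is real work that your sketch does not provide. The Danilov--Khovanski\u{\i} stratification avoids all of this because it never asks $\mu$ to be positive.
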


\begin{proof}
We consider the toric embedding
$X = X_g \subseteq Z_g$ as 
provided by Construction~\ref{constr:hypersurf}.
The five-dimensional toric ambient
variety $Z_g$ is smooth and the 
decomposition
$$
X \ = \ \bigcup_{\gamma_0 \in \rlv(X)} X(\gamma_0)
$$
from Construction~\ref{rem:Coxconst} is
obtained by cutting down the toric
orbit decomposition of~$Z_g$.
Now the idea is to compute the Hodge numbers
in question via the Hodge-Deligne
polynomial, being defined for any variety
$Y$ as
\[
e(Y)
\ := \
\sum_{p,q}e^{p,q}(Y)x^p\bar x^q
\ \in\
\ZZ [x,\bar x],
\]
with $e^{p,q}(Y)$ as in~\cite{dk}*{p.~280}.
We also write $e^{p,q}$ instead of $e^{p,q}(Y)$.
Recall that $e^{p,q} = e^{q,p}$ holds. 
Moreover, in case that $Y$ is smooth and 
projective, the $e^{p,q}$ are related to the 
Hodge numbers as follows:
$$
e^{p,q}(Y)
\ = \
(-1)^{p+q}h^{p,q}(Y).
$$
The Hodge-Deligne polynomial
is additive on disjoint unions,
multiplicative on cartesian products.
We list the neccessary steps for 
computing it in low dimensions.
On $Y = \CC^*$, it evaluates to $x \bar x-1$.
For a hypersurface $Y \subseteq (\mathbb C^*)^n$ 
with no torus factors, one has the 
Lefschetz type formula
$$
e^{p,q}(Y)
\ = \
e^{p+1,q+1}((\CC^*)^n),
\qquad
\text{for } p+q > n-1,
$$
see~\cite{dk}*{p.~290}.
Moreover, according to~\cite{dk}*{p.~291},
with the Newton polytope $\Delta$ of the
defining equation of $Y$, one has the
following identity
$$
\sum_{q\geq 0}e^{p,q}(Y)
\ = \ 
 (-1)^{p+n-1}\binom{n}{p+1}
 +(-1)^{n-1}\varphi_{n-p}(\Delta),
$$
where, denoting by $l^*(B)$ the  
number of interior points of a polytope 
$B$, the function~$\varphi_i$ is defined as 
$$
\varphi_0(\Delta)
\ := \ 0,
\qquad
\varphi_i(\Delta)
\ := \ 
\sum_{j=1}^i(-1)^{i+j} \binom{n+1}{i-j}l^*(j\Delta),
$$
This leads to an explicit formula
for all $e^{p,0}(Y)$.
Moreover, for $\dim(Y) \le 3$, all the numbers
$e^{p,q}$ are directly calculated using
the above formulas. 
For $\dim(Y) = 4$, the values of
$e^{1,1}+e^{1,2}+e^{1,3}$
and $e^{2,1}+e^{2,2}$
and $e^{3,1}$ can be
directly computed using the 
above formulas. 
By the symmetry $e^{p,q} = e^{q,p}$
these sums involve just 
four numbers which thus can be 
expressed in terms of one of them, 
say $e^{1,2}$, plus known 
quantities.
To determine the value of $e^{1,2}$ 
one passes to a smooth compactification
$Y'$ of $Y$ for which
$$
e^{1,2}(Y')
\ = \
-h^{1,2}(Y')
\ = \
-h^{3,2}(Y')
\ = \
e^{3,2}(Y')
$$
holds by Serre's duality
and then observes that $e^{3,2}$
can be computed for all the strata
via the Lefschetz formula.
Now, we apply these principles to the
strata $Y = X(\gamma_0)$ that have
no torus factor and compute the
desired $e^{p,q}$.
If  $Y = X(\gamma_0)$ hase torus
factor, then we use multiplicativity
of the Hodge-Deligne polynomial and  
again the above principles.
\end{proof}

Finally, we extend the discussion
of the varieties $X$ from Number~9
of Theorem~\ref{thm:candidates}
started in Remark~\ref{rem:geomdisc}
by some topological aspects.

\begin{remark}
\label{rem:no9again}
Let $X$ be as in Theorem~\ref{thm:candidates}, No.~9.
Recall that we have a fibration
$X \rightarrow \PP_3$ with general
fiber $F = \PP_1$ and
precisely 12 special fibers
$F_1, \dotsc, F_{12}$, lying
over $p_1, \ldots, p_{12} \in \PP_3$,
each of the $F_i$ being isomorphic
to $\PP_2$.
We claim
$$
F_i^2 \ = \ 1
\quad
\text{for }
i = 1, \ldots 12,
\qquad\qquad
F_i \cdot F_j = 0
\quad
\text{for }
1 \le i <  j \le 12.
$$
The second part is clear because of
$F_i$ and $F_j$ do not intersect
for $i < j$.
In order to establish the first part,
we show $F_1^2 = 1$, where we
may assume $p_1 = [1,0,0,0]$.
Consider the zero sets
$L_1, L_2 \subseteq X$
of two general polynomials
in the variables $T_2,T_3,T_4$.
By definition $L_1\cap L_2 = F$  
and $L_1\sim L_2$, that is
the two surfaces are rationally 
equivalent.
Thus $L_i \sim F + S_i$
for some surface $S_i$.
Observe that we have
$$
F\cdot L_i \ = \ 0,
\qquad\qquad
S_i \cdot L_i \ = \ 0
$$
because $L_i$ is rationally equivalent 
to a complete intersection of 
two general polynomials in $T_1,\dots,T_4$, 
which has empty intersection
with $L_i$.
We deduce
\[
F^2 
\  = \ 
-F\cdot S_1
\ = \ 
S_1\cdot S_1
\ = \
S_1\cdot S_2,
\]
using $S_1\sim S_2$ in the last step.
For computing the last intersection 
number, we may assume $L_1 = V(T_2,T_3,g)$
and $L_2 = V(T_2,T_4,g)$.
Then $S_1 = V(T_2,T_3,h_1)$ with
$$
h_1 
\ = \ 
T_4^{-1}(q_1(T_1,0,0,T_4)T_5+q_2(T_1,0,0,T_4)T_6+
f(T_1,0,0,T_4)T_7),
$$ 
where the division by $T_4$ can be performed
because by hypothesis $q_1,q_2$ and
$f$ do not contain a pure power of $T_1$.
Similarly $S_2 = V(T_2,T_4,h_2)$,
where 
$$
h_2 
\ = \ 
T_3^{-1}(q_1(T_1,0,T_3,0)T_5+q_2(T_1,0,T_3,0)T_6+
f(T_1,0,T_3,0)T_7).
$$
It follows that
\begin{align*}
S_1\cap S_2 & = 
V(T_2,T_3,T_4,\alpha_1T_1T_5
+\alpha_2T_1T_6+\alpha_3T_1^2T_7,
\beta_1T_1T_5
+\beta_2T_1T_6+\beta_3T_1^2T_7)\\
&= V(T_2,T_3,T_4,\alpha_1T_5
+\alpha_2T_6+\alpha_3T_1T_7,
\beta_1T_5
+\beta_2T_6+\beta_3T_1T_7).
\end{align*}
Now one directly checks that $S_1 \cap S_2$
is a point and the intersection is transverse.
Thus, we arrive at $S_1 \cdot  S_2 = 1$,
proving the $F_1^2 = 1$.
Now, fix two general linear forms
$\ell_1,\ell_2\in\mathbb C[T_1,\dots,T_4]$
and set 
$$
E 
\ := \ 
V(T_6,T_7,g) 
\ \subseteq \ 
X,
\qquad\qquad
L 
\ := \ 
V(\ell_1,\ell_2,g)
\ \subseteq \
X.
$$
We claim that the classes of
$E,L,F_1,\ldots,F_{12}$
in $H^{2,2}(X) \cap H^4(X,\mathbb Q)$
are linearly independent.
First observe that $F_1,\dots,F_{12}$
are linearly independent:
passing to the self-intersection,
$\sum_i a_iF_i \sim 0$ turns into
$\sum_i a_i^2=0$ and thus, 
being rational numbers,
all $a_i$ vanish.
Now, by definition of $L$ one has 
$L^2 = L\cdot F_i = 0$
for any~$i$, in particular the class 
of $L$ cannot be in the linear span of
the classes of the 12 fibers.
The statement then follows from
$E\cdot L = 2$, which in turn holds
due to
$$
E\cap L 
\ = \
V(\ell_1,\ell_2,T_6,T_7,g)
\ = \
V(\ell_1,\ell_2,T_6,T_7,q_1T_5)
\ = \
V(\ell_1,\ell_2,T_6,T_7,q_1).
$$
Combining linear
independence of $E,L,F_1,\ldots,F_{12} \in
H^{2,2}(X) \cap H^4(X,\mathbb Q)$
with $h^{2,2}(X) = 14$ as provided by
Proposition~\ref{prop:hodgenum},
we retrieve that the varieties $X$ from
Number~9 of Theorem~\ref{thm:candidates}
satisfy the Hodge Conjecture;
which, in this case, is known to hold
also by~\cite{CM}
and~\cite[Proof of Lemma~15.2]{Vo}.
\end{remark}

\section{Deformations and automorphisms}
\label{sec:defaut}

We take a look at the deformations
of the varieties from Theorem~\ref{thm:candidates}.
For any variety $X$, we denote by $\mathcal{T}_X$ its
tangent sheaf.
If $X$ is Fano, then it is unobstructed and thus
its versal deformation space is of dimension
$h^1(X,\mathcal{T}_X)$.
The following observation makes precise how
the problem of determining $h^1(X,\mathcal{T}_X)$
is connected with determining the automorphisms
in our setting.

\begin{proposition}
\label{prop:infdef}
Let $X$ be a smooth Fano variety $X$
with a general hypersurface Cox ring
$\mathcal{R}(X) = \CC[T_1,\ldots,T_r] / \bangle{g}$
and associated minimal toric embedding $X \subseteq Z$.
Assume that $\mu = \deg(g) \in \Cl(Z)$ is
base point free and no
$w_i = \deg(T_i) \in \Cl(Z)$ lies in
$\mu + \ZZ_{\ge 0} w_1 + \ldots + \ZZ_{\ge 0} w_r$.
Then we have
\begin{eqnarray*}
h^1(X,\mathcal{T}_X)
& = &
\dim(\mathcal{R}(Z)_\mu) - 1
+
\rk(\Cl(Z)) - \sum_{i=1}^r \dim(\mathcal{R}(Z)_{w_i}) 
+
h^0(X,\mathcal{T}_X)
\\[.5ex]
& = &
-1 + \dim(\mathcal{R}(Z)_\mu)
- \dim(\Aut(Z))
+ \dim(\Aut(X)).     
\end{eqnarray*}
\end{proposition}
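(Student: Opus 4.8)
The plan is to read off $h^1(X,\mathcal{T}_X)$ from the closed embedding $X\subseteq Z$ into the smooth toric ambient and to transfer every cohomology group onto $Z$, where it is governed by line bundles and by $\mathcal{T}_Z$. First I would write down the normal sequence of the smooth hypersurface $X\in|\mathcal{O}_Z(\mu)|$ in the smooth toric variety $Z$, using $\mathcal{N}_{X/Z}\cong\mathcal{O}_X(\mu)$,
$$
0 \to \mathcal{T}_X \to \mathcal{T}_Z|_X \to \mathcal{O}_X(\mu) \to 0 .
$$
Its long exact cohomology sequence yields $h^1(X,\mathcal{T}_X)=h^0(\mathcal{O}_X(\mu))-h^0(\mathcal{T}_Z|_X)+h^0(X,\mathcal{T}_X)$ as soon as $H^1(X,\mathcal{T}_Z|_X)=0$. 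So the whole proof reduces to three facts: (a) $h^0(\mathcal{O}_X(\mu))=\dim\mathcal{R}(Z)_\mu-1$; (b) $H^0(\mathcal{T}_Z|_X)\cong H^0(Z,\mathcal{T}_Z)$ together with $h^0(Z,\mathcal{T}_Z)=\sum_i\dim\mathcal{R}(Z)_{w_i}-\rk\Cl(Z)$; and (c) $H^1(X,\mathcal{T}_Z|_X)=0$. For (a) I would twist the structure sequence $0\to\mathcal{O}_Z(-\mu)\to\mathcal{O}_Z\to\mathcal{O}_X\to0$ by $\mathcal{O}_Z(\mu)$ and use $H^0(\mathcal{O}_Z)=\CC$, $H^1(\mathcal{O}_Z)=0$ and $H^0(\mathcal{O}_Z(\mu))=\mathcal{R}(Z)_\mu$.

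The crucial input for (b) and (c) is that $Z$ is a smooth toric Fano variety. Indeed $-\KKK_Z=w_1+\ldots+w_r=(-\KKK_X)+\mu$, where $-\KKK_X$ is ample on $Z$ (this is precisely how the Fano ambient is chosen, see Remark~\ref{rem:genCRFano} and Proposition~\ref{prop:anticanclass}) and $\mu$ is nef because it is base point free; hence $-\KKK_Z$ is ample. Writing $\mathcal{T}_Z\cong\Omega^{\dim Z-1}_Z\otimes\omega_Z^{-1}$, the Bott--Danilov--Steenbrink vanishing theorem, applied to $\Omega^p_Z$ twisted by an ample line bundle on the smooth projective toric $Z$, then gives $H^q(Z,\mathcal{T}_Z)=0$ for $q>0$ and, after twisting by the ample class $-\KKK_X=-\KKK_Z-\mu$, also $H^q(Z,\mathcal{T}_Z(-\mu))=0$ for $q>0$; the same theorem applied to $\Omega^{\dim Z}_Z\otimes\mathcal{O}_Z(-\KKK_X)$ gives $H^q(Z,\mathcal{O}_Z(-\mu))=0$ for $q>0$. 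Feeding these into the long exact sequence of $0\to\mathcal{T}_Z(-\mu)\to\mathcal{T}_Z\to\mathcal{T}_Z|_X\to0$ produces $H^0(\mathcal{T}_Z|_X)\cong H^0(\mathcal{T}_Z)$ and $H^1(\mathcal{T}_Z|_X)\cong H^1(\mathcal{T}_Z)=0$, once I also know $H^0(Z,\mathcal{T}_Z(-\mu))=0$.

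This last vanishing is exactly where the hypothesis on the generator degrees enters. Twisting the Euler sequence $0\to\mathcal{O}_Z^{\oplus\rk\Cl(Z)}\to\bigoplus_i\mathcal{O}_Z(w_i)\to\mathcal{T}_Z\to0$ by $\mathcal{O}_Z(-\mu)$ and taking global sections, the term $\bigoplus_i H^0(\mathcal{O}_Z(w_i-\mu))$ vanishes precisely because no $w_i$ lies in $\mu+\ZZ_{\ge0}w_1+\ldots+\ZZ_{\ge0}w_r$, i.e. $\mathcal{R}(Z)_{w_i-\mu}=0$; together with $H^0(\mathcal{O}_Z(-\mu))=0$ and $H^1(\mathcal{O}_Z(-\mu))=0$ this forces $H^0(\mathcal{T}_Z(-\mu))=0$. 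The untwisted Euler sequence, using $H^1(\mathcal{O}_Z)=0$, likewise gives $h^0(Z,\mathcal{T}_Z)=\sum_i\dim\mathcal{R}(Z)_{w_i}-\rk\Cl(Z)$. Assembling (a)--(c) yields the first displayed formula, and the second follows by substituting $\rk\Cl(Z)-\sum_i\dim\mathcal{R}(Z)_{w_i}=-\dim\Aut(Z)$ (Demazure's description of $\Aut(Z)$, so that $\dim\Aut(Z)=h^0(Z,\mathcal{T}_Z)$) together with $h^0(X,\mathcal{T}_X)=\dim\Aut(X)$.

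The main obstacle I expect is the verification that the ambient $Z$ may genuinely be taken smooth, projective and Fano: everything hinges on $-\KKK_Z$ being ample so that Bott--Danilov--Steenbrink applies. I would therefore fix $Z$ to be the smooth complete toric variety of Construction~\ref{constr:hypersurf} for which $X_g$ is Fano (so $-\KKK_X\in\Ample(Z)$), after which ampleness of $-\KKK_Z=-\KKK_X+\mu$ is formal from ``ample $+$ nef $=$ ample''; note that $\mathcal{R}(Z)=\CC[T_1,\ldots,T_r]$, $\dim\mathcal{R}(Z)_\mu$, $\dim\mathcal{R}(Z)_{w_i}$ and $\dim\Aut(Z)$ depend only on $S$ and $K$, so the formula is insensitive to this choice. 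A secondary point to cite carefully is the standard identification of $\dim\Aut$ with $h^0(\mathcal{T})$ for $Z$ and for the smooth Fano $X$.
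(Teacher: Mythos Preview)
Your reduction via the normal bundle sequence to the three facts (a), (b), (c) is correct and matches the paper's opening move. The divergence is in how you establish $H^1(X,\mathcal{T}_Z|_X)=0$ and $H^0(\mathcal{T}_Z|_X)\cong H^0(Z,\mathcal{T}_Z)$: you lift to $Z$ via $0\to\mathcal{T}_Z(-\mu)\to\mathcal{T}_Z\to\mathcal{T}_Z|_X\to 0$ and invoke Bott--Danilov--Steenbrink, whereas the paper never leaves $X$.

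The obstacle you flag is real and your proposed workaround does not close it. The ambient $Z$ in the statement is the \emph{minimal} toric embedding $Z_g$, which is smooth by \cite[Prop.~3.3.1.12]{ADHL} but is only an open toric subvariety of the projective $Z$ of Construction~\ref{constr:hypersurf}; in general $Z_g\subsetneq Z$, so $Z_g$ is not complete and Bott--Danilov--Steenbrink is unavailable there. Switching to the full projective $Z$ with $-\mathcal{K}_X\in\tau^\circ$ does make $-\mathcal{K}_Z$ ample, but this $Z$ need not be smooth. For instance, No.~24 of Theorem~\ref{thm:candidates} (to which the proposition applies by Corollary~\ref{cor:infdef}) has $w_4=(2,2)$ and $w_5=(0,1)$; then $\gamma_{4,5}$ is a $Z$-face with $\det(w_4,w_5)=2$, so $Z$ is singular along $Z(\gamma_{4,5})$, while the only monomial of degree $\mu=(4,4)$ in $T_4,T_5$ is $T_4^2$, so $\gamma_{4,5}$ is not an $\bar X$-face and $Z_g\subsetneq Z$. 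Thus neither choice of ambient is simultaneously smooth and projective.

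The paper sidesteps this by restricting the Euler sequence of $Z$ to $X$,
\[
0 \ \to \ \mathcal{O}_X^{\oplus\rk\Cl(Z)} \ \to \ \bigoplus_{i}\mathcal{O}_X(D_i) \ \to \ \imath^*\mathcal{T}_Z \ \to \ 0,
\]
and using $h^i(X,\mathcal{O}_X)=0$ for $i>0$ to read off $h^0(X,\imath^*\mathcal{T}_Z)=\sum_i\dim\mathcal{R}(X)_{w_i}-\rk\Cl(Z)$ and $h^1(X,\imath^*\mathcal{T}_Z)=\sum_i h^1(X,D_i)$. The degree hypothesis then gives $\mathcal{R}(X)_{w_i}=\mathcal{R}(Z)_{w_i}$ (equivalently your $\mathcal{R}(Z)_{w_i-\mu}=0$). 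Each $h^1(X,D_i)$ vanishes by Kawamata--Viehweg on $X$: since $\mu$ is base point free and $g$ is general, Bertini makes $D_i=V(g)\cap V(T_i)$ smooth, and one writes $D_i\equiv\mathcal{K}_X+(\varepsilon D_i-\mathcal{K}_X)+(1-\varepsilon)D_i$ with the middle term ample for small $\varepsilon>0$ and $(X,(1-\varepsilon)D_i)$ klt. This is where both hypotheses of the proposition are actually consumed, and no smoothness or completeness of the toric ambient is required.
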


\begin{proof}
First look at
$0 \to \mathcal{T}_X \to \imath^*\mathcal{T}_Z \to \mathcal{N}_{X} \to 0$,
the normal sheaf sequence for the inclusion $\imath \colon X \subseteq Z$.
By assumption, $\mu - \mathcal{K}_X$ is ample and thus we obtain
$$
h^1(X,\mathcal{T}_X)
- h^0(X,\mathcal{T}_X)
\ = \ 
- h^0(X, \imath^*\mathcal{T}_Z)
+ h^0(X,\mathcal{N}_X)
+ h^1(X, \imath^* \mathcal{T}_Z),
$$
according to the Kawamata-Viehweg vanishing theorem.
The task is to evaluate the right hand side.
First, note that we have 
$$
h^0(X,\mathcal{N}_X)
\ = \
\dim(\mathcal{R}(X)_\mu)
\ = \
\dim(\mathcal{R}(Z)_\mu) - 1.
$$
For the remaining two terms, we use the Euler sequence
of $Z$ restricted to $X$ which in our setting is given by
$$
\xymatrix{
0
\ar[r]
&
{\mathcal{O}_X \otimes \Cl(Z)}
\ar[r]
&
{\bigoplus_{i=1}^r \mathcal{O}_X(D_i)}
\ar[r]
&
{\imath^* \mathcal{T}_Z}
\ar[r]
&
0
,
}
$$
where $D_i \subseteq X$ denotes the prime divisor defined
by the Cox ring generator $T_i$.
Since $X$ is Fano, $h^i(X,\mathcal{O}_X)$ vanishes
for all $i > 0$. As first consequence, we obtain
$$
h^0(X, \imath^*\mathcal{T}_Z)
\ = \ 
\sum_{i=1}^r \dim(\mathcal{R}(X)_{w_i})  - \rk(\Cl(Z)) 
\ = \ 
\sum_{i=1}^r \dim(\mathcal{R}(Z)_{w_i})  - \rk(\Cl(Z)),
$$
using $\mathcal{R}(X)_{w_i} \cong H^0(X,D_i)$
and $\mathcal{R}(X)_{w_i} =  \mathcal{R}(Z)_{w_i}$,
where the latter holds by assumption.
Moreover, we can conclude
$$
h^1(X, \imath^*\mathcal{T}_Z)
\ = \ 
\sum_{i=1}^r  h^1(X, D_i).
$$
We evaluate the right hand side.
Since $X$ has a general hypersurface
Cox ring, $Z$ is smooth~\cite[Prop.~3.3.1.12]{ADHL}
and $\mu$ is base point free, we can infer
smoothness of
$$
D_i \ = \ V(g) \cap V(T_i) \ \subseteq \ Z
$$
from Bertini's theorem.
Now choose $\varepsilon > 0$ such that
$\varepsilon D_i - \mathcal{K}_X$ is
nef and big. 
Then, using once more the Kawamata-Viehweg
vanishing theorem, we obtain
$$
h^1(X, D_i)
\ = \
h^1(X, \, \mathcal{K}_X
+ (\varepsilon D_i - \mathcal{K}_X)
+ (1-\varepsilon) D_i)
\ = \
0.
$$
Consequently, $h^1(X, \imath^*\mathcal{T}_Z)$ vanishes.
This gives the first equality of the assertion.
The second one follow from~\cite[Thm.~4.2]{Cox}
and~\cite[Lemma~3.4]{MaOo}.
\end{proof}

Observe that Proposition~\ref{prop:infdef} applies
in particular to all smooth Fano non-degenerate
toric hypersurfaces in the sense of
Khovanskii~\cite{Ho} and~\cite[Def.~4.1]{HaMaWr},
where Lemma~3.3~(v) of the latter guarantees
base point freeness of $\mu \in \Cl(Z)$.
Concerning the varieties from Theorem~\ref{thm:candidates},
we can say the following.

\begin{corollary}
\label{cor:infdef}
For each of the Fano varieties $X$ listed 
in Theorem~\ref{thm:candidates}, except
possibly numbers 13, 14, 15, 33 and 67,
we have
$$
h^1(X,\mathcal{T}_X)
\ = \ 
-1 + \dim(\mathcal{R}(Z)_\mu)
- \dim(\Aut(Z))
+ \dim(\Aut(X)).     
$$ 
\end{corollary}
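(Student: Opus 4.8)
The plan is to derive the displayed identity by applying Proposition~\ref{prop:infdef} to each of the families of Theorem~\ref{thm:candidates}, working throughout with the minimal toric embedding $X \subseteq Z = Z_\mu$ from Construction~\ref{constr:hypersurf}. That proposition rests on two hypotheses: first, that $\mu = \deg(g)$ is base point free on $Z$, and second, that no generator degree $w_i = \deg(T_i)$ lies in $\mu + \ZZ_{\ge 0}w_1 + \ldots + \ZZ_{\ge 0}w_r$. Since the second displayed equality of Proposition~\ref{prop:infdef} is exactly the asserted formula, the entire argument reduces to checking these two conditions family by family and pinning down precisely where they fail.

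For the first hypothesis the key point is that $\mu$ must be nef on $X$, equivalently $\mu \in \SAmple(X) = \lambda$. This single condition does both jobs: it makes $\mu - \KKK_X$ ample, which is the input for the Kawamata--Viehweg vanishing used in Proposition~\ref{prop:infdef}, and, because $Z_\mu$ is smooth and $\rk(\Cl(Z)) = 2$, it yields base point freeness of $\mu$ on $Z_\mu$ through Corollary~\ref{cor:rk2bertini} (needed for the Bertini smoothness of the divisors $D_i$). Smoothness of $Z_\mu$ for every family was already established in the course of proving Theorem~\ref{thm:candidates}. I therefore reduce the first hypothesis to the combinatorial statement $\mu \in \lambda$, which one reads off by locating $\mu$ together with the cone $\lambda = \SAmple(X)$ (singled out by $-\KKK \in \lambda^\circ$) inside $\Eff(X)$. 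This holds in every family except Numbers $13$, $14$, $15$ and $33$, where $\mu$ sits outside $\lambda$; for instance in Number~$13$ one has $\lambda = \cone((1,0),(1,1))$ but $\mu = (1,2) \notin \lambda$, so $\mu$ is not even nef on $X$ and the proposition cannot be invoked.

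For the second hypothesis I would simply test, for each $i$, whether $w_i - \mu$ lies in the effective monoid $\ZZ_{\ge 0}w_1 + \ldots + \ZZ_{\ge 0}w_r$. As $\mu$ is a relation degree and the $w_i$ form a minimal generating system, in almost all families $w_i - \mu$ has a strictly negative coordinate for a suitable coarsening $K \to \ZZ$ and hence cannot lie in the monoid. The sole exception is Number~$67$, where $\mu = (2,0)$ and $w_6 = (2,1)$ give $w_6 - \mu = (0,1) = w_7$, so that $w_6 \in \mu + \ZZ_{\ge 0}w_7$ and the hypothesis breaks down.

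Combining the two checks, both hypotheses of Proposition~\ref{prop:infdef} are satisfied for every family except Numbers $13$, $14$, $15$, $33$ and $67$, and for all remaining families the proposition delivers the asserted formula. The main obstacle is organizational rather than conceptual: one must run both verifications across all items of the table and, in particular, argue that the criterion $\mu \in \lambda$ is sharp, so that the four excluded base-point-free failures are genuine. For this I would invoke the combinatorial base point freeness criterion underlying Corollary~\ref{cor:rk2bertini}, which shows that on the smooth $Z_\mu$ base point freeness of $\mu$ forces $\mu \in \lambda$, matching the necessity of $\mu$ being nef on $X$.
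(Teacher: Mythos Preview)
Your proposal is correct and follows essentially the same approach as the paper: both verify the two hypotheses of Proposition~\ref{prop:infdef} family by family, with base point freeness of $\mu$ failing exactly for Numbers~13, 14, 15, 33 and the generator-degree condition failing exactly for Number~67. The paper simply cites the combinatorial base-point-free criterion of~\cite[Prop.~3.3.2.8]{ADHL} and says ``one directly checks,'' whereas you supply the extra step of reducing the check to $\mu \in \lambda$ via (the argument underlying) Corollary~\ref{cor:rk2bertini}; this is a harmless refinement rather than a different route.
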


\begin{proof}
Using~\cite[Prop.~3.3.2.8]{ADHL} one directly checks
that $\mu \in \Cl(X)$ and hence also $\mu \in \Cl(Z)$ are
base point free in all cases except the Numbers 13, 14, 15
and 33. Number~67 violates the assumption on the
generator degrees.
\end{proof}

The only serious task left open by
Proposition~\ref{prop:infdef}
for explicitly computing
$h^1(X, \mathcal{T}_X)$
is to determine the dimension of
$\Aut(X)$.
As general tools,
we mention~\cite[Thm.~4.4]{HaKeWo},
the algorithms presented
thereafter and their
implementation provided by~\cite{Ke}.
The subsequent example discussions
indicate how one might proceed in concrete
cases.

\begin{example}
The variety $X$ from No.~65 is a product
of the smooth projective quadric
$Q_4 \subseteq \PP_4$ and a projective 
line. So, $X$ is known to be infinitesimally
rigid.
Via Proposition~\ref{prop:infdef}, this  
is seen as follows:
\begin{eqnarray*}
h^1(X,\mathcal{T}_X)
& = & 
-1 + \dim(\mathcal{R}(Z)_\mu)
- \dim(\Aut(Z))
+ \dim(\Aut(X))  
\\
& = &   
-1 + 15 - 27 + 13
\\
& = & 
0.
\end{eqnarray*}
All ingredients are classical:
First, by~\cite[Cor.~I.2]{Bl} the unit
component of the automorphism group
of a product is the product of the unit
components of the respective automorphism
groups. 
Second, $\Aut(Q_n) = \mathrm{O}(n)$
is of dimension $n(n-1)/2$.
\end{example}

\begin{example}
For the varieties $X$ from No.~1, the
algorithm~\cite{Ke} is feasible and
tells us that $\Aut(X)$ is of
dimension~12. In particular,
we see that also these varieties
are infinitesimally rigid:
\begin{eqnarray*}
h^1(X,\mathcal{T}_X)
& = & 
-1 + \dim(\mathcal{R}(Z)_\mu)
- \dim(\Aut(Z))
+ \dim(\Aut(X))  
\\
& = &
-1 + 12 - 23 + 12
\\
& = &
0.
\end{eqnarray*}
In suitable linear coordinates 
respecting the grading,
$g = T_1T_5+T_2T_6+T_3T_7$ holds
and the automorphisms on $X$ are
induced by the five-dimensional diagonally
acting torus respecting $g$ and
the group $\GL(3)$ acting on
$\mathcal{R}(X)_{w_1} \oplus \mathcal{R}(X)_{w_5}$
via
$$
A \cdot (T_1,T_2,T_3,T_4 ; T_5,T_6,T_7)
\ := \
(A \cdot (T_1,T_2,T_3),T_4; (A^{-1})^t \cdot (T_5,T_6,T_7)).
$$
\end{example}

The two previous examples fit into the class of
\emph{intrisic quadrics}, that means varieties
having a hypersurface Cox ring with a quadric as
defining relation. The ideas just observed 
lead to the following general observation.

\begin{corollary}
Let $X$ be a variety satisfying all the
assumptions of Proposition~\ref{prop:infdef}
and assume that $\Aut_H(\bar Z)$ acts almost
transitively on $\mathcal{R}(Z)_\mu$.
\begin{enumerate}
\item
The variety $X$ is infinitesimally rigid
and the dimension of its automorphism group
is given by
$$
\qquad\qquad
\dim(\Aut(X))
\ = \
\dim(\Aut_H(\bar Z)) - (\dim(\mathcal{R}(Z)_\mu) - 1) - \rk(\Cl(Z)).
$$
\item
If $X$ is an intrinisc quadric, then
$\Aut_H(\bar Z)$ acts almost transitively 
on $\mathcal{R}(Z)_\mu$ and thus 
the statements from~(i) hold for $X$.
\end{enumerate}
\end{corollary}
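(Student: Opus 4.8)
The plan is to deduce the whole of~(i) from the single dimension identity
\[
\dim(\Aut(X)) \ = \ \dim(\Aut(Z)) - \dim(\mathcal{R}(Z)_\mu) + 1 ,
\qquad (\star)
\]
and then to verify for~(ii) only the almost transitivity hypothesis. Indeed, once $(\star)$ is available, substituting it into the second equality of Proposition~\ref{prop:infdef} collapses the right hand side to $0$, which is the asserted infinitesimal rigidity; and rewriting $(\star)$ by means of the toric relation $\dim(\Aut(Z)) = \dim(\Aut_H(\bar Z)) - \rk(\Cl(Z))$ — the same input from \cite[Thm.~4.2]{Cox} and \cite[Lemma~3.4]{MaOo} already used in Proposition~\ref{prop:infdef} — turns $(\star)$ into the displayed formula for $\dim(\Aut(X))$. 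So everything comes down to proving $(\star)$.

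For $(\star)$ I would work with the group $G := \Aut_H(\bar Z)$ of $K$-graded automorphisms of $S = \mathcal{R}(Z)$ and its linear action on the homogeneous component $\mathcal{R}(Z)_\mu = S_\mu$. The two relevant identifications are $\Aut(Z)^0 = G^0/H$ and $\Aut(X)^0 = \mathrm{Stab}_G(\bangle{g})^0/H$, the latter by lifting automorphisms of a Mori dream space to graded automorphisms of its Cox ring, see \cite[Thm.~4.4]{HaKeWo}; since $g$ is irreducible and the only homogeneous units of $S$ are constants, $\mathrm{Stab}_G(\bangle{g}) = \mathrm{Stab}_G(\CC g)$. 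Passing to dimensions, the $\dim(H)$ terms cancel and give $\dim(\Aut(Z)) - \dim(\Aut(X)) = \dim(G) - \dim(\mathrm{Stab}_G(\CC g))$. Now the almost transitivity assumption, together with the freedom to choose $g$ inside the open set defining the general hypersurface Cox ring, lets me take $g$ in the dense $G$-orbit of $S_\mu$; the orbit map then yields $\dim(G) - \dim(\mathrm{Stab}_G(g)) = \dim(\mathcal{R}(Z)_\mu)$. Finally, since $K$ is torsion free and $\mu \ne 0$, the character $\chi^\mu$ exhibits the subtorus $H \subseteq \mathrm{Stab}_G(\CC g)$ as surjecting onto the scalars $\CC^*$ by which $\mathrm{Stab}_G(\CC g)$ acts on the line $\CC g$, whence $\dim(\mathrm{Stab}_G(\CC g)) = \dim(\mathrm{Stab}_G(g)) + 1$. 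Combining the last three relations produces exactly $(\star)$.

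For~(ii), where $g$ is a quadric, I would restrict to the subgroup of graded \emph{linear} automorphisms inside $G$, a product of general linear groups acting on the blocks of variables of equal degree. Because $R_g$ is spread, every monomial of $\mathcal{R}(Z)_\mu$ is a convex combination of monomials of $g$ and is therefore itself of degree two in the $T_i$, so that $\mathcal{R}(Z)_\mu$ is precisely the space of $\mu$-homogeneous quadratic forms. On this space the graded linear group acts through the pairings between dually paired degree blocks and the symmetric forms on a possible self-paired block, and over $\CC$ the nondegenerate forms constitute a dense orbit by the classical normal-form theory of bilinear and quadratic forms. The general, hence nondegenerate, quadric $g$ lies in this orbit, so $\Aut_H(\bar Z)$ acts almost transitively on $\mathcal{R}(Z)_\mu$ and part~(i) applies.

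The hard part will be the automorphism identification $\Aut(X)^0 = \mathrm{Stab}_G(\CC g)^0/H$: one must ensure that a graded automorphism of $\mathcal{R}(X) = S/\bangle{g}$, prescribed by the images of the minimal generators $T_i$ in $\mathcal{R}(X)_{w_i}$, genuinely lifts to an honest graded automorphism of $S$ preserving $\bangle{g}$, rather than to a mere graded endomorphism invertible only modulo $g$; this invertibility is what \cite[Thm.~4.4]{HaKeWo} supplies. A secondary point to watch in~(ii) is that the orbit really fills out $\mathcal{R}(Z)_\mu$, i.e.\ that no non-quadratic monomials survive in degree $\mu$, which is exactly where spreadness of $R_g$ is used.
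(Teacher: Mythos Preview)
Your proof of~(i) is correct and follows the same route as the paper: identify $\Aut(X)^0$ with the stabilizer of $\bar X$ (equivalently of the line $\CC g$) in $\Aut_H(\bar Z)^0$ modulo $H$ via \cite[Thm.~4.4]{HaKeWo}, then use orbit--stabilizer together with the almost transitivity hypothesis to read off the dimension, and feed the result into Proposition~\ref{prop:infdef}. Your explicit splitting of $\mathrm{Stab}_G(\CC g)$ versus $\mathrm{Stab}_G(g)$ via the surjection $\chi^\mu \colon H \to \CC^*$ is exactly the ``$-1$'' that the paper records in the line ``as we are looking for only the zero sets of these equations''.

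For~(ii) the paper simply cites \cite[Prop.~2.1]{FaHa}, whereas you sketch the underlying normal-form argument directly. Your sketch is sound once $\mathcal{R}(Z)_\mu$ is known to consist only of polynomials of standard degree two, and your use of spreadness to force this is legitimate since a general hypersurface Cox ring may always be taken spread. Be aware, though, that without that reduction the block-linear subgroup $\prod \GL(V_i)$ alone need not act with a dense orbit on $S_\mu$, because higher-degree monomials can appear in $S_\mu$; one then has to bring in the unipotent (``shearing'') part of $\Aut_H(\bar Z)$ as well, which is what the cited reference handles in general.
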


\begin{proof}
We take $X \subseteq Z$ as in
Construction~\ref{constr:hypersurf}.
According to~\cite[Thm.~4.4~(iv)]{HaKeWo},
the unit component $\Aut(X)^0$ equals the
stabilizer $\Aut(Z)^0_X$
of $X \subseteq Z$ under the action of
$\Aut(Z)^0$ on $Z$.
Thus, using~\cite[Thm.~4.2.4.1]{ADHL},
we obtain
\begin{eqnarray*}
\dim(\Aut(X))
& = &
\dim(\Aut(Z)^0_X)
\\
& = &   
\dim(\Aut_H(\bar Z)^0_{\bar X}) - \dim(H)
\\
& = & 
\dim(\Aut_H(\bar Z)^0) - (\dim(\mathcal{R}(Z)_\mu) - 1) - \rk(\Cl(Z)), 
\end{eqnarray*} 
where $\mathcal{R}(Z)_\mu$ is the
space of defining equations and ``$-1$''
pops up as we are looking for only the
zero sets of these equations.
Thus, Proposition~\ref{prop:infdef}
gives the first statement.
For the second one, note that $\Aut_H(\bar Z)$
acts almost transitively on $\mathcal{R}(Z)_\mu$
due to~\cite[Prop.~2.1]{FaHa}.
\end{proof}

Let us take up once more the geometric
discussion of the varieties from No.~9
of Theorem~\ref{thm:candidates}
started in Remarks~\ref{rem:geomdisc}
and~\ref{rem:no9again}.
Using geometric properties observed
so far, we see $\Aut(X)$ is trivial.

\begin{remark}
Let $X$ be as in Theorem~\ref{thm:candidates}, No.~9.
We claim that $\Aut(X)$ is finite in this case.
As a consequence, we obtain
\begin{eqnarray*}
h^1(X,\mathcal{T}_X)
& = & 
\dim(\mathcal{R}(Z)_\mu) - 1
+
\rk(\Cl(Z)) - \sum_{i=1}^r \dim(\mathcal{R}(Z)_{w_i})
\\
& = &
40 - 1 + 2 - 29
\\
& = &
12.
\end{eqnarray*}
Look at the fibration $\pi_1 \colon X \to Y_1 = \PP_3$
from Remark~\ref{rem:geomdisc}.
By~\cite[Prop.~I.1]{Bl}, there
is an induced action of the unit componenent
$\Aut(X)^0$ on $Y_1$ turning $\pi_1$ into an
equivariant map.
This means in particular that the induced
action permutes the image points of the
12 singular fibers of $\pi_1$.
By the generality assumption, these 12
points don't lie in a common hyperplane
and thus induced action of $\Aut(X)^0$
on~$Y_1$ must be trivial.
Recall that any point of the fiber
$\pi_1$ over $[y] = [y_1,\ldots,y_4]$
has Cox coordinates
$$
[y,x,z]
\ = \ 
[y_1,\ldots,y_4,x_1,x_2,z],
\qquad
\text{where}
\quad
q_1(y)x_1 + q_2(y)x_2 + f(y)z = 0,
$$
with general quadrics $q_1,q_2$ 
and a general cubic $f$ in the
first four variables.
Let us see in these terms what it
means that the $\pi_1$-fibers are
invariant under $\Aut(X)^0$.
Consider the action of the characteristic
quasitorus $H = \Spec \, \CC[\Cl(Z)]$
on $\bar Z = \CC^r$ given by the
$\Cl(Z)$-grading of $\CC[T_1,\ldots,T_r]$.
The group $\Aut_H( \bar Z)$ of
$H$-equivariant automorphisms is
concretely given as
$$
G \ = \ \GL(4) \times \GL(2) \times \KK^*.
$$
According to~\cite[Thm.~4.4]{HaKeWo},
we obtain $\Aut(X)^0$ as a factor group
of the unit component of the subgroup
$\Aut_H(\bar X)$ of $\Aut_H( \bar Z)$
stabilizing $\bar X \subseteq \bar Z$.
We take a closer look at the action of
an element
$\gamma = \diag(A_1,A_2,\alpha_3)$
of $\Aut_H(\bar X)$ on
$\hat X \subseteq \bar X$.
Given general $y \in \CC^4$ and $x \in \CC^2$,
we find $z \in \CC$ such that $[y,x,z]$
is a point of~$\hat X$. 
In particular, $\gamma \cdot [y,x,z]$
belongs to the fiber of $\pi_1$ over $[y]$.
The latter implies
$A_1 \cdot y = \eta y$ with $\eta \in \KK^*$
and for the matrix $A_2 = (a_{ij})$ it gives
\begin{eqnarray*}
0
& = & 
q_1(y) (a_{11} x_1 + a_{12} x_2)
+
q_2(y) (a_{21} x_1 +  a_{22} x_2 )
+
\alpha_3 f(y) z
\\
& = & 
q_1(y) ((a_{11}-\alpha_3) x_1 + a_{12} x_2)
+
q_2(y) (a_{21} x_1 +  (a_{22}-\alpha_3) x_2 ).
\end{eqnarray*}
Recall that this holds for any general choice
of $y$ and $x$.
As a consequence, we arrive at $a_{11}-\alpha_3 = 0 = a_{12}$,
because otherwise 
$q_1q_2^{-1} \in  \CC(T_1,T_2)$ holds in $\CC(X)$
which is impossible due to the general choice
of $q_1$ and $q_2$.
By the same argument, we see $a_{22}-\alpha_3 = 0 = a_{21}$.
Thus, $\gamma$ acts trivially on each fiber
of $\pi_1$ and we conclude that
$\Aut(X)$ is of dimension zero.
\end{remark}

Proposition~\ref{prop:infdef} suggests that
the infinitesimal deformations of $X$ can be
obtained by varying the coefficients of the
defining equation in the Cox ring.
As a possible approach to turn this impression into a
precise statement, we mention the comparison theorem
of Christophersen and Kleppe~\cite[Thm.~6.2]{ChKl} which
relates in particular deformations of a variety
to deformations of its Cox ring.

\begin{bibdiv}
\begin{biblist}

\bib{ArLa}{article}{
   author={Artebani, Michela},
   author={Laface, Antonio},
   title={Hypersurfaces in Mori dream spaces},
   journal={J. Algebra},
   volume={371},
   date={2012},
   pages={26--37},
   issn={0021-8693},
}

\bib{ADHL}{book}{
   author={Arzhantsev, Ivan},
   author={Derenthal, Ulrich},
   author={Hausen, J\"urgen},
   author={Laface, Antonio},
   title={Cox rings},
   series={Cambridge Studies in Advanced Mathematics},
   volume={144},
   publisher={Cambridge University Press, Cambridge},
   date={2015},
   pages={viii+530},
   isbn={978-1-107-02462-5},
}

\bib{Ba}{article}{
   author={Batyrev, V. V.},
   title={On the classification of toric Fano $4$-folds},
   note={Algebraic geometry, 9},
   journal={J. Math. Sci. (New York)},
   volume={94},
   date={1999},
   number={1},
   pages={1021--1050},
   issn={1072-3374},
}

\bib{BeHa1}{article}{
   author={Berchtold, Florian},
   author={Hausen, J\"{u}rgen},
   title={GIT equivalence beyond the ample cone},
   journal={Michigan Math. J.},
   volume={54},
   date={2006},
   number={3},
   pages={483--515},
   issn={0026-2285},
  }

\bib{BeHa2}{article}{
   author={Berchtold, Florian},
   author={Hausen, J\"{u}rgen},
   title={Cox rings and combinatorics},
   journal={Trans. Amer. Math. Soc.},
   volume={359},
   date={2007},
   number={3},
   pages={1205--1252},
   issn={0002-9947},
}

\bib{BCHM}{article}{
   author={Birkar, Caucher},
   author={Cascini, Paolo},
   author={Hacon, Christopher D.},
   author={McKernan, James},
   title={Existence of minimal models for varieties of log general type},
   journal={J. Amer. Math. Soc.},
   volume={23},
   date={2010},
   number={2},
   pages={405--468},
   issn={0894-0347},
}

\bib{Bl}{article}{
   author={Blanchard, Andr\'{e}},
   title={Sur les vari\'{e}t\'{e}s analytiques complexes},
   language={French},
   journal={Ann. Sci. Ecole Norm. Sup. (3)},
   volume={73},
   date={1956},
   pages={157--202},
}

\bib{Magma}{article}{
   author={Bosma, Wieb},
   author={Cannon, John},
   author={Playoust, Catherine},
   title={The Magma algebra system. I. The user language},
   note={Computational algebra and number theory (London, 1993)},
   journal={J. Symbolic Comput.},
   volume={24},
   date={1997},
   number={3-4},
   pages={235--265},
   issn={0747-7171},
}

\bib{Ca}{article}{
   author={Casagrande, Cinzia},
   title={On the birational geometry of Fano 4-folds},
   journal={Math. Ann.},
   volume={355},
   date={2013},
   number={2},
   pages={585--628},
   issn={0025-5831},
}

\bib{ChKl}{article}{
   author={Christophersen, Jan Arthur},
   author={Kleppe, Jan O.},
   title={Comparison theorems for deformation functors via invariant theory},
   journal={Collect. Math.},
   volume={70},
   date={2019},
   number={1},
   pages={1--32},
   issn={0010-0757},
}

\bib{CKP}{article}{
   author={Coates, T.},
   author={Kasprzyk, A.},
   author={Prince, T.},
   title={Four-dimensional Fano toric complete intersections},
   journal={Proc. A.},
   volume={471},
   date={2015},
   number={2175},
   pages={20140704, 14},
   issn={1364-5021},
}

\bib{CM}{article}{
   author={Conte, A.},
   author={Murre, J. P.},
   title={The Hodge conjecture for fourfolds admitting a covering by
   rational curves},
   journal={Math. Ann.},
   volume={238},
   date={1978},
   number={1},
   pages={79--88},
   issn={0025-5831},
}

\bib{Cox}{article}{
   author={Cox, David A.},
   title={The homogeneous coordinate ring of a toric variety},
   journal={J. Algebraic Geom.},
   volume={4},
   date={1995},
   number={1},
   pages={17--50},
   issn={1056-3911},
}

\bib{dk}{article}{
   author={Danilov, V. I.},
   author={Khovanski\u{\i}, A. G.},
   title={Newton polyhedra and an algorithm
     for calculating Hodge-Deligne numbers},
   language={Russian},
   journal={Izv. Akad. Nauk SSSR Ser. Mat.},
   volume={50},
   date={1986},
   number={5},
   pages={925--945},
   issn={0373-2436},
}

\bib{Der}{article}{
   author={Derenthal, Ulrich},
   title={Singular del Pezzo surfaces whose universal torsors are
   hypersurfaces},
   journal={Proc. Lond. Math. Soc. (3)},
   volume={108},
   date={2014},
   number={3},
   pages={638--681},
   issn={0024-6115},
}

\bib{DHHKL}{article}{
   author={Derenthal, Ulrich},
   author={Hausen, J\"{u}rgen},
   author={Heim, Armand},
   author={Keicher, Simon},
   author={Laface, Antonio},
   title={Cox rings of cubic surfaces and Fano threefolds},
   journal={J. Algebra},
   volume={436},
   date={2015},
   pages={228--276},
   issn={0021-8693},
}

\bib{Do80}{article}{
   author={Dolgachev, Igor},
   title={Newton polyhedra and factorial rings},
   journal={J. Pure Appl. Algebra},
   volume={18},
   date={1980},
   number={3},
   pages={253--258},
   issn={0022-4049},
}
		
\bib{Do81}{article}{
   author={Dolgachev, Igor},
   title={Correction to: ``Newton polyhedra and factorial rings''},
   journal={J. Pure Appl. Algebra},
   volume={21},
   date={1981},
   number={1},
   pages={9--10},
   issn={0022-4049},
 }

\bib{Do82}{article}{
   author={Dolgachev, Igor},
   title={Weighted projective varieties},
   conference={
      title={Group actions and vector fields},
      address={Vancouver, B.C.},
      date={1981},
   },
   book={
      series={Lecture Notes in Math.},
      volume={956},
      publisher={Springer, Berlin},
   },
   date={1982},
   pages={34--71},
  }
		
\bib{FaHa}{article}{
   author={Fahrner, Anne},
   author={Hausen, J\"{u}rgen},
   title={On intrinsic quadrics},
   journal={Canad. J. Math.},
   volume={72},
   date={2020},
   number={1},
   pages={145--181},
   issn={0008-414X},
}

\bib{FaHaNi}{article}{
   author={Fahrner, Anne},
   author={Hausen, J\"{u}rgen},
   author={Nicolussi, Michele},
   title={Smooth projective varieties with a torus action of complexity 1
   and Picard number 2},
   journal={Ann. Sc. Norm. Super. Pisa Cl. Sci. (5)},
   volume={18},
   date={2018},
   number={2},
   pages={611--651},
   issn={0391-173X},
 }

\bib{Ha}{article}{
   author={Hausen, J\"{u}rgen},
   title={Cox rings and combinatorics. II},
   language={English, with English and Russian summaries},
   journal={Mosc. Math. J.},
   volume={8},
   date={2008},
   number={4},
   pages={711--757, 847},
   issn={1609-3321},
 }

\bib{HaHiWr}{article}{
   author={Hausen, J\"{u}rgen},
   author={Hische, Christoff},
   author={Wrobel, Milena},
   title={On torus actions of higher complexity},
   journal={Forum Math. Sigma},
   volume={7},
   date={2019},
   pages={Paper No. e38, 81},
}

\bib{MDSpackage}{article}{
   author={Hausen, J\"{u}rgen},
   author={Keicher, Simon},
   title={A software package for Mori dream spaces},
   journal={LMS J. Comput. Math.},
   volume={18},
   date={2015},
   number={1},
   pages={647--659},
   issn={1461-1570},
}

\bib{HaKeWo}{article}{
   author={Hausen, J\"{u}rgen},
   author={Keicher, Simon},
   author={Wolf, R\"{u}diger},
   title={Computing automorphisms of Mori dream spaces},
   journal={Math. Comp.},
   volume={86},
   date={2017},
   number={308},
   pages={2955--2974},
   issn={0025-5718},
}

\bib{HaMaWr}{article}{
   author={Hausen, J\"{u}rgen},
   author={Mauz, Christian},
   author={Wrobel, Milena},
   title={The anticanonical complex for non-degenerate
          toric complete intersections},
   eprint={arXiv:2006.04723},
   date={2020},
}

\bib{HuKe}{article}{
   author={Hu, Yi},
   author={Keel, Sean},
   title={Mori dream spaces and GIT},
   note={Dedicated to William Fulton on the occasion of his 60th birthday},
   journal={Michigan Math. J.},
   volume={48},
   date={2000},
   pages={331--348},
   issn={0026-2285},
 }

\bib{Is1}{article}{
   author={Iskovskih, V. A.},
   title={Fano threefolds. II},
   language={Russian},
   journal={Izv. Akad. Nauk SSSR Ser. Mat.},
   volume={42},
   date={1978},
   number={3},
   pages={506--549},
   issn={0373-2436},
 }

\bib{Is2}{article}{
   author={Iskovskih, V. A.},
   title={Fano threefolds. I},
   language={Russian},
   journal={Izv. Akad. Nauk SSSR Ser. Mat.},
   volume={41},
   date={1977},
   number={3},
   pages={516--562, 717},
   issn={0373-2436},
 }

\bib{Ke}{article}{
   author={Keicher, Simon},
   title={A software package to compute automorphisms of graded algebras},
   journal={J. Softw. Algebra Geom.},
   volume={8},
   date={2018},
   pages={11--19},
}

\bib{Ho}{article}{
   author={Khovanskii, A.G.},
   title={Newton polyhedra, and toroidal varieties},
   language={Russian},
   journal={Funkcional. Anal. i Prilo\v{z}en.},
   volume={11},
   date={1977},
   number={4},
   pages={56--64, 96},
   issn={0374-1990},
 }

 \bib{Kl}{article}{
   author={Kleiman, Steven L.},
   title={Bertini and his two fundamental theorems},
   note={Studies in the history of modern mathematics, III},
   journal={Rend. Circ. Mat. Palermo (2) Suppl.},
   number={55},
   date={1998},
   pages={9--37},
   issn={1592-9531},
 }

\bib{Ku}{article}{
   author={K\"{u}chle, Oliver},
   title={Some remarks and problems concerning the geography of Fano
   $4$-folds of index and Picard number one},
   journal={Quaestiones Math.},
   volume={20},
   date={1997},
   number={1},
   pages={45--60},
   issn={0379-9468},
}

\bib{MaOo}{article}{
   author={Matsumura, Hideyuki},
   author={Oort, Frans},
   title={Representability of group functors, and automorphisms of algebraic
   schemes},
   journal={Invent. Math.},
   volume={4},
   date={1967},
   pages={1--25},
   issn={0020-9910},
}

\bib{Ma}{article}{
   author={Mauz, Christian},
   title={On Fano and Calabi-Yau varieties with complete intersection Cox ring},
   journal={PhD Thesis, in preparation},
}

\bib{MM1}{article}{
   author={Mori, Shigefumi},
   title={Cone of curves, and Fano $3$-folds},
   conference={
      title={Proceedings of the International Congress of Mathematicians,
      Vol. 1, 2},
      address={Warsaw},
      date={1983},
   },
   book={
      publisher={PWN, Warsaw},
   },
   date={1984},
   pages={747--752},
}

\bib{MM2}{article}{
   author={Mori, Shigefumi},
   author={Mukai, Shigeru},
   title={Classification of Fano $3$-folds with $B_{2}\geq 2$},
   journal={Manuscripta Math.},
   volume={36},
   date={1981/82},
   number={2},
   pages={147--162},
   issn={0025-2611},
}

\bib{PrSh}{article}{
   author={Przyjalkowski, Victor},
   author={Shramov, Constantin},
   title={Bounds for smooth Fano weighted complete intersections},
   journal={Commun. Number Theory Phys.},
   volume={14},
   date={2020},
   number={3},
   pages={511--553},
   issn={1931-4523},
}

\bib{AG5}{collection}{
   title={Algebraic geometry. V},
   series={Encyclopaedia of Mathematical Sciences},
   volume={47},
   editor={Shafarevich, I. R.},
   note={Fano varieties;
   A translation of {\it Algebraic geometry. 5} (Russian), Ross. Akad. Nauk,
   Vseross. Inst. Nauchn. i Tekhn. Inform., Moscow;
   Translation edited by A. N. Parshin and I. R. Shafarevich},
   publisher={Springer-Verlag, Berlin},
   date={1999},
   pages={iv+247},
   isbn={3-540-61468-0},
}

\bib{Vo}{article}{
   author={Voisin, Claire},
   title={Some aspects of the Hodge conjecture},
   journal={Jpn. J. Math.},
   volume={2},
   date={2007},
   number={2},
   pages={261--296},
   issn={0289-2316},
}

\bib{Wi}{article}{
   author={Wi\'{s}niewski, Jaros\l aw},
   title={Fano $4$-folds of index $2$ with $b_2\geq 2$. A contribution to
   Mukai classification},
   journal={Bull. Polish Acad. Sci. Math.},
   volume={38},
   date={1990},
   number={1-12},
   pages={173--184},
   issn={0239-7269},
}

\end{biblist}
\end{bibdiv}

\end{document}